\theoremstyle{plain}% default
\newtheorem*{theorem*}{Theorem}
\newtheorem*{remark*}{Remark}
\newtheorem*{example*}{Example}
\newtheorem{lemma}{Lemma}[subsection]
\newtheorem{proposition}[lemma]{Proposition}
\newtheorem{corollary}[lemma]{Corollary}
\newtheorem{theorem}[lemma]{Theorem}
\newtheorem*{conjecture*}{Conjecture}
\newtheorem{introtheorem}{Theorem}
\theoremstyle{definition}
\newtheorem{definition}[lemma]{Definition}
\newtheorem{example}[lemma]{Example}
\theoremstyle{remark}
\newtheorem{remark}[lemma]{Remark}
\newtheorem{construction}[lemma]{Construction}
\newtheorem{notation}[lemma]{Notation}
\newcommand{\Hom}{\operatorname{Hom}}
\newcommand{\eps}{\varepsilon}
\newcommand{\triv}{{\mathbbm 1}}
\newcommand{\Ind}{\operatorname{Ind}}
\newcommand{\id}{\operatorname{Id}}
\newcommand{\Ker}{\operatorname{Ker}}
\newcommand{\End}{\operatorname{End}}
\newcommand{\bC}{{\mathbb C}}
\newcommand{\bZ}{{\mathbb Z}}
\newcommand{\lam}{{\lambda}}
\newcommand{\gl}{{\mathfrak{gl}}}
\newcommand{\sll}{{\mathfrak{sl}}}
\newcommand{\abs}[1]{\left|{#1}\right|}
\newcommand{\C}{{\mathcal{C}}}
\newcommand{\cV}{{\mathcal{V}_t}}
\newcommand{\cD}{{\mathcal{D}}}
\newcommand{\rx}{{\mathrm{x}}}
\newcommand{\Repc}{{Rep_{_{\C}}}}
\newcommand{\uRep}{{\underline{Rep}}}
\newcommand{\InnaA}[1]{{{#1}}}
\newcommand{\InnaC}[1]{{#1}}
\newcommand{\InnaD}[1]{{#1}}
\def\quotient#1#2{%
    \raise1ex\hbox{$#1$}\Big/\lower1ex\hbox{$#2$}%
}
\begin{document}

\date{\today}
\title{Categorical actions and multiplicities in the Deligne category $\underline{Rep}(GL_t)$}
 \author{Inna Entova-Aizenbud}
\address{Inna Entova-Aizenbud,
Mathematics department, Ben Gurion University, Be'er-Sheva,
Israel}
\email{inna.entova@gmail.com}

\begin{abstract}
 We study the categorical type A action on the Deligne category $\mathcal{D}_t=\uRep(GL_t)$ ($t \in \bC$) and its ``abelian envelope'' $\cV$ constructed in \cite{EHS}. 
 
 For $t \in \bZ$, this action categorifies an action of the Lie algebra $\sll_{\bZ}$ on the tensor product of the Fock space $\mathfrak{F}$ with $\mathfrak{F}_t^{\vee}$, its restricted dual ``shifted'' by $t$, as was suggested by I. Losev. In fact, this action makes the category $\cV$ the tensor product (in the sense of Losev and Webster, \cite{LW}) of categorical $\sll_{\bZ}$-modules $Pol$ and $Pol_t^{\vee}$. The latter categorify $\mathfrak{F}$ and $\mathfrak{F}_t^{\vee}$ respectively, the underlying category in both cases being the category of stable polynomial representations (also known as the category of Schur functors), as described in \cite{HY, L}.
 
 When $t \notin \bZ$, the Deligne category $\mathcal{D}_t$ is abelian semisimple, and the type A action induces a categorical action of $\sll_{\bZ} \times \sll_{\bZ}$. This action categorifies the $\sll_{\bZ} \times \sll_{\bZ}$-module $\mathfrak{F} \boxtimes \mathfrak{F}^{\vee}$, making $\mathcal{D}_t$ the exterior tensor product of the categorical $\sll_{\bZ}$-modules $Pol$, $Pol^{\vee}$.
 
 Along the way we establish a new relation between the Kazhdan-Lusztig coefficients  and the multiplicities in the standard filtrations of tilting objects in $\cV$.
\end{abstract}

\keywords{}
\maketitle
\setcounter{tocdepth}{3}
%\tableofcontents
\section{Introduction}
In this paper we study the categorical type A action on the Deligne categories $\uRep(GL_t)$, and the induced categorical $\sll_{\bZ}$-action on their abelian versions $\cV$ defined in \cite{EHS} for $t \in \bZ$.
%In this paper we construct, for each integer $t$, a tensor category $\mathcal{V}_t$ satisfying a remarkable universal property: given a tensor category $\mathcal{C}$, the exact tensor functors $\mathcal{V}_t \rightarrow \mathcal{C}$ classify the $t$-dimensional objects in $\mathcal{C}$ not annihilated by any Schur functor.
\subsection{Categorical actions}
The theory of categorical type A actions was first introduced by Chuang and Rouquier in \cite{CR}, and further developed by Rouquier (\cite{R}), Khovanov and Lauda (see for example \cite{KL}), Brundan, Losev, Webster and others (see \cite{BLW,HY,LW}, and the review \cite{L}). 

According to this theory, a categorical type A action on an abelian category $\mathcal{A}$ is an adjoint pair of (exact) endofunctors $E, F$ of $\mathcal{A}$, and natural transformations $\tau \in \End(F^2)$, $\rx \in \End(F)$, defining an action of the degenerate affine Hecke algebra $dAHA_d$ on $F^d$ for any $d$. 

If the generalized eigenvalues of $\rx \in \End(F)$ are integers, the type A action induces an $\sll_{\bZ}$-action. It is given by the decompositions $F = \sum_{a \in \bZ} F_a$, $E = \sum_{a \in \bZ} E_a$, corresponding to the generalized eigenvalues $a$ of $\rx \in \End(F)$ and the induced natural transformation $\rx^{\vee} \in \End(E)$. The obtained functors $E_a, F_a$ are once again an adjoint pair; furthermore, we require that $E_a$ be isomorphic to the right adjoint of $F_a$. On the level of the Grothendieck group of $\mathcal{A}$, the functors $E_a, F_a$ induce an action of $\sll_{\bZ}$ on $Gr(\mathcal{A})$ through its generators $e_a, f_a$ ($a \in \bZ$). The natural transformation $\tau$ serves to categorify the relations $[e_a, f_b] = 0$ for $a \neq b$, and $[e_a, f_a]=h_a$. See \cite{CR, R, BLW} for explicit definitions and details.

Several important examples of categorical $\sll_{\bZ}$-actions have been found; in particular, it was shown that the category $Rep(\gl(m|n))$ of finite-dimensional representations of the Lie superalgebra $\gl(m|n)$ carries a natural action of $\sll_{\bZ}$, where $F, E$ are given by tensoring with the standard representation $\bC^{m|n}$ and its dual. 

Then the (complexified) Grothendieck group of $Rep(\gl(m|n))$ corresponds to the representation $\wedge^m \bC^{\bZ} \otimes \wedge^n (\bC^{\bZ})^*$ of $\sll_{\bZ}$.

\subsection{Deligne categories}
In \cite{DM}, Deligne and Milne constructed a family of rigid symmetric monoidal categories $\uRep(GL_t)$ (denoted by $\mathcal{D}_t$ in this paper), parametrized by $t \in \mathbb{C}$. These categories satisfy several properties: 
\begin{itemize}
\item The category $\mathcal{D}_t$ is the universal Karoubian additive symmetric monoidal category generated by a dualizable object of dimension $t$. We will denote this object by $V_t$.
 \item For $t = n \in \bZ_+$, this universal property provides a symmetric monoidal functor $\mathbf{S}_{n}:\mathcal{D}_{t=n} \longrightarrow Rep(GL(n))$, $V_t \mapsto \bC^n$ ($Rep(GL(n))$ is the category of finite-dimensional representations of $GL(n)$). This functor is full and essentially surjective; due to this fact, one can consider $\mathcal{D}_t$ as a polynomial family ``interpolating'' the categories $Rep(GL(n))$.                                                                                                                                                                                         \item For $t$ non-integer, these are semisimple tensor categories. 
                                                                                                                                                                                                 \end{itemize}
 
 The above universal property also provides symmetric monoidal functors $\mathbf{S}_{m, n}:\mathcal{D}_{t} \longrightarrow Rep(GL(m|n))$, $V_t \mapsto \bC^{m|n}$ whenever $t \in \bZ$, $m, n \in \bZ_+$, $m-n = t$ (here $Rep(GL(m|n))$ is the category of finite-dimensional representations of the algebraic supergroup $GL(m|n)$).
 
 For $t\in\bZ$ the category $\mathcal{D}_t$ is Karoubian but not abelian. To remedy this, Hinich, Serganova and the author constructed in \cite{EHS} a tensor category $\cV$ which satisfies a universal property: given a tensor category $\mathcal{C}$, the exact tensor functors $\mathcal{V}_t \rightarrow \mathcal{C}$ classify the $t$-dimensional objects in $\mathcal{C}$ not annihilated by any Schur functor (if $t \notin \bZ$, this condition is vacuous). 
 
 The category $\mathcal{D}_t$ embeds into $\cV$ for any $t$ as a full rigid symmetric monoidal subcategory. When $t \notin \bZ$, the category $\cV$ is defined to be just $\mathcal{D}_t$.

 In addition to the properties above, the construction of the category $\cV$ demonstrates a certain stabilization phenomenon in the categories of representations of algebraic supergroups $GL(m|n)$, and $\cV$ can be seen as a ``stable'' inverse limit of $Rep(GL(m|n))$ for $m-n =t$ as $m, n \to \infty$.

 In the category $\cV$ we distinguish three sets of isomorphism classes of 
objects, all three parametrized by the set of bipartitions of arbitrary size 
(denoted by $\lam = (\lam^{\bullet}, \lam^{\circ})$). The three sets are: simple 
objects $\mathbf{L}(\lam)$, standard\footnote{The terminology comes from the 
theory of highest-weight categories. The category $\cV$ is not a highest-weight 
category (it does not have projectives nor injectives), but it is 
``\InnaC{lower} highest-weight'', meaning that it has a filtration by full 
subcategories which are highest-weight. The objects $\mathbf{V}(\lam), 
\mathbf{T}(\lam)$ lying in these subcategories are respectively standard and 
tilting objects.} objects $\mathbf{V}(\lam)$ and tilting objects 
$\mathbf{T}(\lam)$.
 
 When $t \notin \bZ$, we have: $\mathbf{L}(\lam) = \mathbf{V}(\lam)= \mathbf{T}(\lam)$.
 
 \subsection{} The categorical type A action on $\cD_t$ is given by functors $F, E \in \End(\mathcal{D}_t)$, $F := V_t \otimes (\cdot)$, $E :=V_t^* \otimes (\cdot)$; the natural transformations $\tau ,\rx $ are described in Construction \ref{constr:skeletal_action_Rep_gl}.  One should beware that the category $\mathcal{D}_t$ is not abelian, and the eigenvalues of the operator $\rx$ are not necessarily integral, making it a generalized type A action in the sense of \cite[Section 5.1.1]{R}.

 This action extends to a generalized type A action on the abelian category $\cV$, and the functors $\mathbf{S}_{m, n}: \mathcal{D}_{t=m-n} \to Rep(GL(m|n))$, $V_t \to \bC^{m|n}$ respect the type A actions on these categories.
%  Construction \ref{constr:skeletal_action_Rep_gl} is a generalization of the categorical $\sll_{\bZ}$-action on $Rep(\gl(m|n))$. It applies to any category of the form $Rep_{\C}(\gl(V))$, where $\C$ is a (Karoubian additive) symmetric monoidal $\bC$-linear category, $V \in \C$ a dualizable object, and $\gl(V) = V \otimes V^*$ the Lie algebra object corresponding to $V$. 
% 
%  When $\C$ is a symmetric tensor category (that is, a rigid symmetric monoidal abelian category, with some additional conditions, as described in Section \ref{ssec:notn_tensor_cat})
%  The category $Rep_{\C}(\gl(V))$ is the category of representations of $\gl(V)$ inside $\C$. Such categories include $Rep(\gl_m)$ ($\C =Vec$, $V = \bC^m$), and $Rep(\gl(m|n))$ ($\C =SVec$, $V = \bC^{m|n}$). The universal property of $\mathcal{D}_t$ implies that $\mathcal{D}_t$ is universal among such categories $Rep_{\C}(\gl(V))$ for which $\dim V = t$, and the functor $\mathcal{D}_t \longrightarrow Rep_{\C}(\gl(V))$ is equivariant with respect to the categorical action.
%  
%  When $t \in \bZ$ and $\C$ is abelian (thus a tensor category), a similar result holds for $\cV, Rep(\gl(m|n))$ and the action of $\sll_{\bZ}$ (see Lemma \ref{lem:equivar_skeletal_act}).
 
\subsection{Results}
Our first main result is the description of the categorical actions on $\mathcal{D}_t$ and $\cV$.

Consider the Fock space representation $\rho_{\mathfrak{F}}:\sll_{\bZ} \longrightarrow \End(\mathfrak{F})$, with a basis $\{v_{\nu}\}$ of weight vectors parametrized by partitions of arbitrary size. We will also consider the ``twisted'' dual Fock representation $\rho_{\mathfrak{F}}^{\vee}:\sll_{\bZ} \longrightarrow \End(\mathfrak{F})$ (denoted $\mathfrak{F}^{\vee}$ for short), where $f_a, e_a$ act by $\rho_{\mathfrak{F}}(f_{-a})^T, \rho_{\mathfrak{F}}(e_{-a})^T$.

Next, \InnaA{for $t \in \bZ$}, consider the \InnaA{representation $\mathfrak{F}^{\vee}_t$ of $ \sll_{\bZ}$: the underlying vector space is $\mathfrak{F}^{\vee}$, with $f_a, e_a$ ($a \in \bZ$) acting by $\rho_{\mathfrak{F}}^{\vee}(f_{a+t}), \rho_{\mathfrak{F}}^{\vee}(e_{a+t})$.}

% For $t \in\bZ$, the identification $ \bZ-t= \bZ$ makes $\mathfrak{F}^{\vee}_t$ a representation of $\sll_{\bZ}$ (a ``shift'' of $\mathfrak{F}^{\vee}$).

The following statements were suggested by I. Losev:
\begin{introtheorem}\label{introthrm:action_Deligne_cat}
\mbox{}
\begin{enumerate}
 \item Let $t \notin \bZ$ (so $\mathcal{D}_t$ is semisimple).

 The functors $F = V_t \otimes(\cdot)$, $E =V_t^* \otimes(\cdot) $ induce an action of the Lie algebra $\sll_{\bZ}\times \InnaA{\sll_{\bZ}}$ on the complexified Grothendieck group $\bC \otimes_{\bZ} Gr(\mathcal{D}_t)$. 
 
 As a $\sll_{\bZ}\times \InnaA{\sll_{\bZ}}$-module, $$\bC \otimes_{\bZ}Gr(\mathcal{D}_t) \stackrel{\sim{}}{\longrightarrow} \mathfrak{F} \boxtimes \mathfrak{F}^{\vee}, \; \; \left[ \mathbf{T}(\lam) \right] \mapsto v_{\lam^{\bullet}} \otimes v_{\lam^{\circ}}$$
 
 \item Let $t \in \bZ$. The functors $F = V_t \otimes(\cdot)$, $E =V_t^* \otimes(\cdot) $ induce an action of the Lie algebra $\sll_{\bZ}$ on the complexified Grothendieck group $\bC \otimes_{\bZ} Gr(\cV)$. 
 
 We then have an isomorphism of $\sll_{\bZ}$-modules $$\bC \otimes_{\bZ} Gr(\cV) \stackrel{\sim{}}{\longrightarrow} \mathfrak{F} \otimes \mathfrak{F}^{\vee}_t, \; \; \left[ \mathbf{V}(\lam) \right] \mapsto v_{\lam^{\bullet}} \otimes v_{\lam^{\circ}}$$
\end{enumerate}

\end{introtheorem}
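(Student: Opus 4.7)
The plan is to reduce both statements to a single branching rule for the translation functors $F = V_t \otimes (\cdot)$ and $E = V_t^* \otimes (\cdot)$ acting on the standard (resp.\ tilting) objects, and then compute the generalized eigenvalues of $\rx \in \End(F)$ on each summand of that rule.

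First I would establish, for every bipartition $\lam = (\lam^{\bullet}, \lam^{\circ})$, the formula
\begin{equation*}
[F \mathbf{V}(\lam)] \; = \; \sum_{\mu^{\bullet} = \lam^{\bullet} + \square} [\mathbf{V}(\mu^{\bullet}, \lam^{\circ})] \; + \; \sum_{\mu^{\circ} = \lam^{\circ} - \square} [\mathbf{V}(\lam^{\bullet}, \mu^{\circ})]
\end{equation*}
in $Gr(\cV)$, together with the dual formula for $E$ (exchanging addition and removal between the two components), and in the semisimple setting the analogous identity for $[F \mathbf{T}(\lam)]$ in $Gr(\mathcal{D}_t)$. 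In the case $t \notin \bZ$ this is a direct Littlewood--Richardson-style decomposition for the indecomposables of $\mathcal{D}_t$. For $t \in \bZ$ there is no direct decomposition, and one has to read off the multiplicities of $V_t \otimes \mathbf{V}(\lam)$ in its standard filtration; for this I would lift to $Rep(GL(m|n))$ via the exact tensor functors $\mathbf{S}_{m,n}$ for $m, n$ sufficiently large, exploiting the fact that $\mathbf{S}_{m,n}$ intertwines translation functors, and invoking the classical tensor-product rule with $\bC^{m|n}$.

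Second, I would identify the generalized eigenvalues of $\rx$ on each summand: a summand obtained by adding a box of content $c$ to $\lam^{\bullet}$ should acquire eigenvalue $c$, while a summand obtained by removing a box of content $c$ from $\lam^{\circ}$ should acquire eigenvalue $-c - t$ (or a close normalization thereof, fixed by Construction \ref{constr:skeletal_action_Rep_gl}). This is the core content calculation; it can be done by pushing $\rx$ through $\mathbf{S}_{m,n}$ and reducing to the familiar Jucys--Murphy-type content statement on tensor powers of $\bC^{m|n}$, or equivalently by a direct calculation inside the walled Brauer category.

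With these two ingredients the theorem becomes formal. For $t \notin \bZ$ the eigenvalues arising from $\lam^{\bullet}$ lie in $\bZ$ while those arising from $\lam^{\circ}$ lie in the disjoint coset $-t + \bZ$, so $F$ and $E$ split into two commuting families $F_a, E_a$ and $F_a', E_a'$, yielding the $\sll_{\bZ} \times \sll_{\bZ}$-action. The branching rule above then coincides term by term with the action of $f_a \otimes 1$ and $1 \otimes f_a$ on $v_{\lam^{\bullet}} \otimes v_{\lam^{\circ}} \in \mathfrak{F} \boxtimes \mathfrak{F}^{\vee}$. For $t \in \bZ$ the two cosets collapse and the two families combine into a single $\sll_{\bZ}$-action; the shift by $t$ in the $\lam^{\circ}$-contents is precisely what is absorbed by passing from $\mathfrak{F}^{\vee}$ to $\mathfrak{F}^{\vee}_t$, giving the asserted isomorphism.

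The main obstacle I expect is the branching computation for $F \mathbf{V}(\lam)$ in the non-semisimple case $t \in \bZ$: $V_t \otimes \mathbf{V}(\lam)$ only admits a standard filtration, and one must show that each $\mathbf{V}(\mu)$ appears with multiplicity exactly one (and only for $\mu$ of the stated form). This requires either a cellular analysis of the walled Brauer algebra action inside $\cV$, or a stabilization argument verifying that the $\gl(m|n)$-multiplicities are independent of $m, n$ in the relevant regime. A secondary technical point is checking that the adjoint pairing between $F_a$ and $E_a$ and the natural transformation $\tau$ meet the axioms of a (generalized) type A categorical action in the sense of \cite{R}, so that the $\sll_{\bZ}$-relations can be deduced on $Gr$ without additional input.
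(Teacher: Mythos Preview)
Your proposal is essentially the paper's argument: the branching rule for $F$ on standard objects is Proposition~\ref{prop:action_standards}, the eigenvalue computation is Proposition~\ref{prop:eigenvalues_x_Deligne}, and the identification with the Fock space action is then formal exactly as you describe. Two small corrections worth noting: for $t \notin \bZ$ there is no functor $\mathbf{S}_{m,n}$, so the paper realizes your ``walled Brauer'' alternative by lifting to the formal-parameter category $\mathcal{D}_T$ and specializing at large $d \in \bZ_{>0}$ where $\mathbf{S}_{d,0}$ is available; and for $t \in \bZ$ the relevant functor $\mathcal{F}_{m|n}: \cV \to Rep(\gl(m|n))$ is \emph{not} exact, so the reduction goes through the local equivalence $\mathcal{V}_t^k \simeq Rep^k(\gl(m|n))$ for $m,n \gg k$, which is precisely the stabilization you anticipate in your last paragraph.
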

In fact, we prove a stronger statement. Let $Pol$ be the category of polynomial representations of $\gl(\infty)$ (alternatively, this can be described as the category of all Schur functors, or the free Karoubian additive symmetric monoidal $\bC$-linear category with one generator). This category has an $\sll_{\bZ}$-action categorifying the Fock space representation $\mathfrak{F}$ (see \cite{HY, L}). One can similarly define a twisted $\sll_{\bZ}$-action on $Pol$ categorifying the Fock space $\mathfrak{F}^{\vee}_t$. By abuse of notation, we denote the category $Pol$ with these actions by $Pol$, $ Pol_t^{\vee}$ respectively. 

With this notation, we can now state our second result, showing that $\cV$ is a tensor product categorification $ $ in the sense of Losev and Webster (see \cite[Remark 3.6]{LW}, and details in Definition \ref{def:categorical_tensor_product}):

\begin{introtheorem}\label{introthrm:tens_prod_categorif}
Let $t \in \bZ$. The category $\cV$ with the action of $\sll_{\bZ}$ is a tensor 
product categorification $Pol \otimes Pol_t^{\vee}$.

\InnaC{Moreover, is unique in the following sense: consider a lower highest-weight category $\C$ (see Definition \ref{def:categorical_tensor_product}) with an $\sll_{\bZ}$-action $(F', E', \rx', \tau')$ making it a tensor product categorification $Pol \otimes Pol_t^{\vee}$ in the sense of Definition \ref{def:categorical_tensor_product}, and such that the natural transformation $\tau'_{E'F'}: E'F' \to F'E'$ induced by $\tau'$ is an isomorphism. 
 
 Then we have a strongly equivariant equivalence $\C \cong \cV$.}
\end{introtheorem}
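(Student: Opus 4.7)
The plan is to treat the two assertions of the theorem in turn: first verify that $\cV$ satisfies the axioms of Definition \ref{def:categorical_tensor_product}, and then deduce uniqueness from the general machinery of tensor product categorifications in \cite{LW}.

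For the existence part, the lower highest-weight structure on $\cV$ (standard objects $\mathbf{V}(\lam)$ and tilting objects $\mathbf{T}(\lam)$ indexed by bipartitions $\lam = (\lam^{\bullet}, \lam^{\circ})$) is supplied by \cite{EHS}, and Theorem \ref{introthrm:action_Deligne_cat}(2) already identifies the complexified Grothendieck group with $\mathfrak{F} \otimes \mathfrak{F}^{\vee}_t$ as an $\sll_{\bZ}$-module, sending $[\mathbf{V}(\lam)]$ to $v_{\lam^{\bullet}} \otimes v_{\lam^{\circ}}$. What remains is to match the poset structure and the standard-filtration multiplicities with those predicted by the LW tensor product construction. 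I would proceed in two substeps: (i) compute the $\rx$-generalized eigenspace decomposition of $F$ on $\cV$ in terms of the content statistic on bipartitions, and identify the crystal on the set of isomorphism classes of simples with the tensor-product crystal on $\mathfrak{F} \otimes \mathfrak{F}^{\vee}_t$; (ii) invoke the Kazhdan-Lusztig-type identity for tilting multiplicities announced in the abstract to match the multiplicities $[\mathbf{T}(\lam) : \mathbf{V}(\mu)]$ with the tilting decompositions on the LW side. Together these produce the required compatibility of the lower highest-weight structure with the tensor factorization.

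For the uniqueness assertion, I would invoke the general uniqueness theorem of \cite{LW} for type A tensor product categorifications. The extra hypothesis that the natural transformation $\tau'_{E'F'}: E'F' \to F'E'$ is an isomorphism plays a key role: it excludes any additional highest-weight contribution to the cross-term $[E', F']$ beyond the one dictated by the tensor-product crystal, forcing $\C$ to be a minimal (``generic'') categorification of $Pol \otimes Pol_t^{\vee}$. The LW classification then produces a strongly equivariant equivalence between any two such minimal lower highest-weight categorifications, which applied to $\C$ and $\cV$ gives $\C \cong \cV$.

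The main obstacle, as I see it, is substep (ii) above: since $\cV$ has neither projectives nor injectives, the standard highest-weight categorification arguments of \cite{LW, BLW} cannot be applied verbatim, and one must instead work throughout with standard filtrations of tiltings. This is precisely the role of the new Kazhdan-Lusztig identity advertised in the abstract: it is the technical bridge translating LW's projective-based matching into a tilting-based one suited to the lower highest-weight setting of $\cV$, and it will be the heart of the argument.
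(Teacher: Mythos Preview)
Your existence sketch is roughly on target but overcomplicated: Definition \ref{def:categorical_tensor_product} does not ask you to match tilting multiplicities against an LW model, only to verify (a) compatibility of the lower highest-weight poset with the weight order on $\mathfrak{F}\otimes\mathfrak{F}^{\vee}_t$, and (b) the explicit short exact sequences for $F_a, E_a$ on standards and costandards. The paper handles (a) via weight-diagram combinatorics (if $[\mathbf{V}(\lam):\mathbf{L}(\mu)]\neq 0$ then $d'_{\lam}$ and $d'_{\mu}$ have the same core, and Lemma \ref{lem:equal_cores_weights} converts this into the required weight inequality), and (b) is exactly Proposition \ref{prop:action_standards} plus the duality $(\cdot)^{\vee}$ on $\cV$. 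The Kazhdan--Lusztig identity of Theorem \ref{introthrm:equiv_multip} plays no role in this part.

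For uniqueness your approach diverges from the paper, and as stated it has a genuine gap. The LW uniqueness theorem is formulated for highest-weight categories with enough projectives, and your proposed bridge via the KL identity is not convincing: that identity compares multiplicities \emph{within} $\cV$, not between $\cV$ and an unknown $\C$, so it is unclear how it feeds into an LW-style rigidity argument. The paper instead constructs the equivalence by hand. The $\sll_{\bZ}$-action data $(F', E', \tau', \eps, \eta)$ assemble into a monoidal functor from the oriented Brauer category $\mathcal{OB}$ to $\End(\C)$; the hypothesis that $\tau'_{E'F'}$ is invertible is used precisely here, to verify the defining relations of $\mathcal{OB}$ from \cite{BCNR} --- not, as you suggest, to rule out extra highest-weight contributions. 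Evaluating on the simple $U = L(\varnothing,\varnothing)$ gives $\Phi:\mathcal{OB}\to\C$; one checks $\Phi(\dim X)=t\cdot\id_U$ by reading off the eigenvalues of $\rx'$ on $F'E'U$, so $\Phi$ descends to $\cD_t$. The universal property of $\cV$ as abelian envelope of $\cD_t$ (from \cite{EHS}) then extends $\Phi$ to an exact faithful functor $\cV\to\C$. One shows $\Phi$ sends standards to standards (hence $\cD_t\stackrel{\sim}{\to}\C^{tilt}$ via a $\dim\Hom$ count using the formula $\dim\Hom(T,T')=\sum_\nu(T:\Delta(\nu))(T':\nabla(\nu))$ on both sides), and finally deduces $\Phi$ is an equivalence from the tilting presentation of arbitrary objects in a lower highest-weight category. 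None of this passes through the general machinery of \cite{LW}.
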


\begin{remark}
 Of course, when $t \notin \bZ$, $\mathcal{D}_t$ is an (exterior) tensor product of the categorical $\sll_{\bZ}\times \InnaA{\sll_{\bZ}}$-module $Pol \boxtimes Pol^{\vee}_0$ in the sense of Losev and Webster. The uniqueness of such a tensor product categorification follows from the uniqueness of the categorification of the $\sll_{\bZ}$-representation $\mathfrak{F}$, which is straightforward (the weight spaces in $\mathfrak{F}$ being one-dimensional).
\end{remark}

% This result allows one to use the truncation technique described in \cite{BLW} to reduce the problem of computing various multiplicities in the category $\cV$ to computing these multiplicities in the parabolic category $\mathcal{O}$.

In a separate result, we establish a connection between multiplicities in standard filtrations of tilting objects and multiplicities of the Jordan-Holder components of the standard modules.

\begin{introtheorem}\label{introthrm:equiv_multip}
 Let $t \in \bC$, and let $\lambda = (\lam^{\bullet}, \lam^{\circ}), \mu=(\mu^{\bullet}, \mu^{\circ}) $ be two bipartitions, regarded as pairs of Young diagrams. 
 
 Then $$\left( \mathbf{T}(\lam):\mathbf{V}(\mu) \right)_{\cV} = \left[ \mathbf{V}(\lam) :\mathbf{L}(\mu) \right]_{\cV}$$
 where the LHS denotes the multiplicity of the standard object $\mathbf{V}(\mu)$ in the standard filtration of the standard object $\mathbf{T}(\lam)$, and the RHS denotes the multiplicity of the simple object $\mathbf{L}(\lam)$ among the Jordan-Holder components of $\mathbf{V}(\mu)$.
 \end{introtheorem}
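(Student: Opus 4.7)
The plan is to reduce the equality to classical Brauer--Humphreys reciprocity inside a finite highest-weight truncation of $\cV$, and then to identify the Jordan-Holder multiplicities of standard objects with those of costandard objects via a contravariant duality.

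First I would exploit the lower highest-weight structure of $\cV$: for any finite downward-closed set $\Lambda$ of bipartitions that contains $\lambda$, $\mu$ and every bipartition below them, the full subcategory $\cV_{\Lambda}\subset\cV$ consisting of objects whose composition factors are of the form $\mathbf{L}(\nu)$ with $\nu\in\Lambda$ is a finite highest-weight category in the classical sense, with indecomposable projectives, injectives and tiltings. Since $\mathbf{T}(\lambda)$, $\mathbf{V}(\lambda)$, $\mathbf{V}(\mu)$ and $\mathbf{L}(\mu)$ all lie in $\cV_{\Lambda}$, both multiplicities appearing in the statement coincide whether computed in $\cV$ or in $\cV_{\Lambda}$. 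In this finite setting, Ringel's Brauer--Humphreys reciprocity gives
\[
 \bigl(\mathbf{T}(\lambda):\mathbf{V}(\mu)\bigr)_{\cV_{\Lambda}} \;=\; \bigl[\nabla(\mu):\mathbf{L}(\lambda)\bigr]_{\cV_{\Lambda}},
\]
where $\nabla(\mu)$ denotes the costandard object labeled by $\mu$. The problem is thereby reduced to the identity $[\nabla(\mu):\mathbf{L}(\lambda)] = [\mathbf{V}(\mu):\mathbf{L}(\lambda)]$.

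For this remaining identity I would construct a contravariant duality on $\cV$ (or at least on each $\cV_{\Lambda}$) that fixes every simple object $\mathbf{L}(\nu)$ and sends $\mathbf{V}(\nu)$ to $\nabla(\nu)$. A natural candidate combines the rigid dual $X\mapsto X^{*}$ with the involution $(\nu^{\bullet},\nu^{\circ})\mapsto(\nu^{\circ},\nu^{\bullet})$ on bipartitions, which already fixes the labeling of simples. Equivalently, once $\Lambda$ is fixed, one may transport the usual ``super transpose'' duality on $Rep(GL(m|n))$ through the functor $\mathbf{S}_{m,n}$ for $m,n$ large enough that $\mathbf{S}_{m,n}$ is fully faithful on $\cV_{\Lambda}$; on the $GL(m|n)$-side this super transpose fixes simples and interchanges Kac modules with dual Kac modules, which are the images of $\mathbf{V}(\nu)$ and $\nabla(\nu)$.

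The main obstacle is verifying that the chosen duality actually sends the intrinsically defined $\mathbf{V}(\lambda)$ to the costandard $\nabla(\lambda)$ with the \emph{same} label. In $\cV$ costandards are not a priori given by an explicit construction, so this identification has to be carried out either via the stabilization $\cV=\varprojlim Rep(GL(m|n))$, or by appealing to Theorem~\ref{introthrm:tens_prod_categorif}: under the tensor-product categorification both multiplicities become structure constants in the Fock tensor product $\mathfrak{F}\otimes\mathfrak{F}^{\vee}_{t}$ relating the classes $\{[\mathbf{T}(\lambda)]\}$, $\{[\mathbf{V}(\mu)]\}$ and $\{[\mathbf{L}(\mu)]\}$, and the desired equality is then the symmetry of the Kazhdan--Lusztig polynomials governing the canonical and dual canonical bases, exactly along the lines of the ``new relation'' promised in the abstract.
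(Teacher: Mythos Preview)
There is a genuine gap: the reciprocity you invoke does not exist. Brauer--Humphreys (BGG) reciprocity in a highest-weight category reads
\[
\bigl(P(\lambda):\Delta(\mu)\bigr) \;=\; \bigl[\nabla(\mu):L(\lambda)\bigr],
\]
with \emph{projectives} on the left, not tiltings. There is no general formula of the shape $\bigl(T(\lambda):\Delta(\mu)\bigr)=\bigl[\nabla(\mu):L(\lambda)\bigr]$; such an identity would require the category to be Ringel self-dual with a very specific relabeling, which is not part of the hypotheses. Moreover, even granting your formula and the duality step, you would end up with
\[
\bigl(\mathbf{T}(\lambda):\mathbf{V}(\mu)\bigr) \;=\; \bigl[\mathbf{V}(\mu):\mathbf{L}(\lambda)\bigr],
\]
which has $\lambda$ and $\mu$ reversed on the right-hand side compared to the statement. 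The example immediately after the theorem in the paper (with $t=0$, $\lambda=(\blacksquare,\square)$, $\mu=(\varnothing,\varnothing)$) shows that the matrix $[\mathbf{V}(\lambda):\mathbf{L}(\mu)]$ is \emph{not} symmetric, so this reversal is fatal: your argument would give $1$ on the left and $0$ on the right. The fallback via Theorem~\ref{introthrm:tens_prod_categorif} does not rescue this either, since the ``symmetry of Kazhdan--Lusztig polynomials'' you allude to is precisely the non-formal statement at stake.

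The paper's proof takes an entirely different route and does not use any abstract reciprocity. It shows independently that both sides coincide with the combinatorial multiplicity $D^{\lambda}_{\mu}(t)$ of Comes--Wilson. For the Jordan--H\"older side $[\mathbf{V}(\lambda):\mathbf{L}(\mu)]$, this is read off from the cap-diagram description of Kazhdan--Lusztig coefficients in \cite{EHS}, together with Lemma~\ref{lem:CW_vs_V} relating the two flavours of weight diagrams. For the standard-filtration side $(\mathbf{T}(\lambda):\mathbf{V}(\mu))$, the argument passes through the objects $Y(\lambda)=S^{\lambda^{\bullet}}V_t\otimes S^{\lambda^{\circ}}V_t^{*}$: one computes their decomposition into tiltings via the lifting map $Lift_t$ to the generic category $\mathcal{D}_T$, and their standard multiplicities via a Littlewood--Richardson formula coming from $Rep^{alg}(GL_\infty)$. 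Comparing the two yields a matrix identity $b(t)D(t)=b(t)K(t)$ with $b(t)$ invertible (unitriangular), whence $K(t)=D(t)$. The point is that the equality is a genuinely non-formal feature of $\cV$, established by matching both sides against an external combinatorial invariant, not by any internal highest-weight duality.
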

 \begin{remark}
  When $t \notin \bZ$, these multiplicities are $\delta_{\lam, \mu}$.
 \end{remark}

Moreover, we show that these multiplicities coincide with the multiplicity $D^{\lam}_{\mu}(t)$ appearing in \cite[Section 6]{CW}. This statement is related to the existence of an exact faithful SM functor $Rep^{alg}(GL_{\infty}) \to \cV$, $Rep^{alg}(GL_{\infty})$ being the category of algebraic representations of $GL_{\infty}$ as described in \cite{DPS, SS}. The standard objects $\mathbf{V}(\lam)$ in $\cV$ are images of the simple objects in the category $Rep^{alg}(GL_{\infty})$, and thus are ``flat'' with respect to the parameter $t$. This allows one to replace the ``lifting'' techniques from \cite{CO, CW} (passing from a specific value of the parameter $t$ to a generic value) by computations in the category $Rep^{alg}(GL_{\infty})$, aided by its embedding into the category $\mathcal{D}_t$ for generic $t$. 

This theorem also allows us to obtain partial results on the action of the functors $F_a, E_a$ on tilting objects $\mathbf{T}(\lam)$ (see Section \ref{ssec:action_F_a_tilting}).

 %The category $\mathcal{V}_t$ is abelian; when $t \in \bZ$, it is built of pieces of categories of representations of Lie supergroups $GL(m|n)$ with $m-n = t$.

\subsection{Acknowledgements}
The problem of studying categorical actions on Deligne categories was suggested 
by I. Losev. 
The author would like to thank I. Losev for his suggestion and for sharing his 
ideas on the subject, and P. Etingof and V. Serganova for their helpful 
comments and suggestions. 
The author is extremely grateful to J. Brundan for his thorough explanations on 
categorical actions and pointing out an error in the earlier version of the 
article. This project has received funding from the ERC under grant agreement No. 669655 (PI: David Kazhdan).
 
% (in the sense of categorification of tensor products, see \cite{BLW}), and that 
% this can be proved in more or less the same methods as in \cite{BLW}.
\section{Notation}\label{sec:notn}
The base field throughout the paper will be $\bC$. All the categories considered will be $\bC$-linear.
\subsection{Young diagrams and partitions}\label{ssec:notn_Young}
We will use the notions of partition and Young diagram interchangeably, and denote these by small-case greek letters. The set of all Young diagrams will be denoted by $\mathcal{P}$.

A {\it bipartition} is a pair of partitions, denoted by $\lam = (\lam^{\bullet}, \lam^{\circ}) \in \mathcal{P} \times \mathcal{P}$. 

Given a partition $\nu \vdash n$, we denote by $\nu_i, i\geq 0$ the parts of $\nu$ in non-increasing order ($\nu_i=0$ for $i>>0$). We also denote $\abs{\nu} :=n$, $\ell(\nu) := \max\{i: \nu_i \neq 0\}$, and by $\nu^{\vee}$ the transpose of $\nu$ (e.g. the transpose of $\yng(2)$ is $\yng(1,1)$). 

For a bipartition $\lam = (\lam^{\bullet}, \lam^{\circ})$, we denote $\abs{\lam} := \abs{\lam^{\bullet}} + \abs{\lam^{\circ}}$.

Given a cell $(i, j)$ in the Young diagram of a bipartition $\nu$, we denote the content of this cell by $ct(\square(i, j)):= i-j$.

Given a Young diagram $\nu$, we denote by $\nu + \square $ (resp. $\nu - \square$) the set of all Young diagrams obtained by adding (resp. removing) a box from $\nu$. The sets $\nu \pm \square $ are disjoint unions of subsets $\nu \pm \square_a$ ($a\in \bC$); the subset $\nu \pm \square_a$ contains the Young diagrams obtained by adding/removing a box of content $a$ from $\nu$ (it is empty if $a \notin \bZ$, otherwise it contains at most one element). 

When considering a bipartition $\lam = (\lam^{\bullet}, \lam^{\circ})$, we will 
use the notation $$\lam \pm \InnaD{\young(\bullet)}_a := \{\mu \in \mathcal{P} 
\times 
\mathcal{P}: \mu^{\bullet} \in \lam^{\bullet} \pm \InnaD{\young(\bullet)}_a, \; 
\mu^{\circ} 
= \lam^{\circ} \}$$ and $$\lam \pm \square_a := \{\mu \in \mathcal{P} \times 
\mathcal{P}: \mu^{\circ} \in \lam^{\circ} \InnaD{\pm} \square_a, \; 
\mu^{\bullet} = \lam^{\bullet}\}$$

Whenever a set of objects $\{ X_{\lambda} \}$ in an additive category is parametrized by partitions (or bi-partitions), we denote by $X_{\nu \pm \square_a}$ the direct sum of objects corresponding to the partitions (or bi-partitions) in the set $\nu \pm \square_a$. In particular, if the set $\nu \pm \square_a$ is empty, then the object is zero.

\subsection{Tensor categories}\label{ssec:notn_tensor_cat}
% \begin{notation}
We will use the notions of a rigid symmetric monoidal $\bC$-linear category (symmetric monoidal will be abbreviated as SM) and of symmetric monoidal functors (SM functor for short) as defined in 
\cite{D1, EGNO}. In such a category $\mathcal{C}$ we denote by $\triv $ the unit object, by $\sigma_{C_1, C_2}$ the symmetry morphisms, and by $ev, coev$ the evaluation and coevaluation morphisms.

A (symmetric) {\it tensor category} (after \cite{D1, EGNO}) will stand for an abelian rigid SM category where the bifunctor $\otimes$ is bilinear on 
morphisms and where
$\End_{\C}(\triv) = \bC$.
% %  \begin{enumerate}
% %   \item Let $M, M' \in \mathcal{C}$. We denote by $\sigma_{M, M'}: M 
% % \otimes M' \rightarrow M' \otimes M$ the commutativity constraint for $M, M'$.
% %   \item We denote by $\mathbf{1}$ the unit object in the monoidal category 
% % $\mathcal{C}$.
% %  \item Let $M \in \mathcal{C}$. We denote by $M^*$ the dual of $M$, and we 
% % write $ev_{M}: M^* \otimes M \rightarrow \mathbf{1}$, $coev_{M}: \mathbf{1} 
% % \rightarrow M \otimes M^*$ for the evaluation and coevaluation morphisms, 
% % respectively.
% % \item An abelian rigid SM $\bC$-linear category with bifunctor $\otimes$ which is bilinear on 
% % morphisms and with
% % $\End_{\C}(\triv) = \bC$ will be called a tensor category.
% % \end{enumerate}
% % \end{notation} 
% % \subsection{Karoubian categories}
% % \begin{definition}[Karoubian category]
% % We will call a category $\mathcal{A}$ Karoubian\footnote{Deligne calls such 
% % categories "pseudo-abelian" (c.f. \cite[1.9]{D}). } if it is an additive 
% % category, and every idempotent morphism is a projection onto a direct factor.
% % \end{definition}
% % 
% % \begin{definition}[Block of a Karoubian category]
% %  A block in an Karoubian category is a full subcategory generated by an 
% % equivalence class of indecomposable objects, defined by the minimal equivalence 
% % relation such that any two indecomposable objects with a non-zero morphism 
% % between them are equivalent.
% % \end{definition}
\subsection{Representations of a the Lie algebra object \texorpdfstring{$\gl(V)$}{gl}}\label{ssec:notn_Lie_alg_obj}
Let $\C$ be a rigid SM category, and let $V \in \C$. The 
object $\gl(V) := V\otimes V^*$ is a Lie algebra object, and one can consider the 
category $\Repc(\gl(V))$ of representations of $\gl(V)$ in $\C$, which is again a rigid SM category.

The objects of $\Repc(\gl(V))$ are pairs $(M, \rho)$ where $\rho: \gl(V) \otimes 
M \longrightarrow M$. In particular, we have objects $V$, $V^*$, the standard and co-standard representations of $\gl(V)$. 
%By abuse of notation, we will denote them again by $V$, $V^*$. 

% The category $\Repc(\gl(V))$ is again a rigid SM category.

\subsection{Representations of the general linear Lie superalgebra}\label{ssec:notn_rep_superalg}
Throughout the paper, we will consider the tensor category $Rep(\gl(m|n))$ of 
finite-dimensional representations of the general linear Lie superalgebra 
$\gl(m|n)$ (together with even morphisms) in the sense of \cite[Section 
4]{EHS}. 
This category is ``half'' of the usual category of finite-dimensional 
representations of $\gl(m|n)$ which are integrable over $GL(m|n)$; it contains 
those representations of $\gl(m|n)$ on which the $\bZ_2$-grading is given by the 
action of the element $diag(1, \ldots, 1, -1, \ldots, -1)$ from the supergroup 
$GL(m|n)$. Any object in the category $Rep(\gl(m|n))$ is a subquotient of a 
finite direct sum of mixed tensor powers of $\bC^{m|n}$.

% %We will consider a subset of the bipartitions $\lam:=(\lam^{\bullet}, 
% % \lam^{\circ})$ satisfying certain restrictions (see below).
The category $Rep(\gl(m|n))$ is a highest weight category. 

To describe the weight poset of this category, let us introduce the following 
notation:
\begin{itemize}
 \item Let $w_1, \ldots, w_m, w_{m+1}, \ldots, w_{m+n}$ be the basis of the 
standard representation $W:=\bC^{m|n}$.
 \item Let $E_{i,j}$, $1 \leq i, j \leq m+n$ be the corresponding matrix units 
basis of $\gl(m|n)$.
 \item Let $\mathfrak{h}$ be the Cartan subalgebra consisting of diagonal 
matrices, and consider the Borel subalgebra of upper-triangular matrices in the 
above basis.
 \item Let $\delta_1, \ldots, \delta_{m+n}$ be the basis of $\mathfrak{h}^*$ 
dual to $E_{1,1}, \ldots, E_{m+n, m+n}$. The root system is $\{\delta_i - 
\delta_j \rvert i \neq j\}$, with positive roots $\delta_i - \delta_j$, $i <j$. 
\end{itemize}

The simple modules $\mathit{L}(a_1, a_2, \ldots,a_m, b_1, b_2, \ldots, b_n)$ are 
then parametrized by pairs of weakly-decreasing integer sequences $a_1, a_2, 
\ldots,a_m$, $ b_1, b_2, \ldots, b_n$. The highest weight of such a module is 
$\sum_{i=1}^m a_i\delta_i+\sum_{j=1}^n b_{j}\delta_{m+j}$.

We denote the standard objects in the category $Rep(\gl(m|n))$ by 
$\mathit{K}(a_1, a_2, \ldots,a_m, b_1, b_2, \ldots, b_n)$ (these are called {\it 
Kac modules}).

We now describe two (partial) correspondences between bipartitions and simple 
$\gl(m|n)$-modules. For each bipartition $\lam:=(\lam^{\bullet}, \lam^{\circ})$ 
we define a $\gl(m|n)$-module $\mathit{L}(\lam)$, which is either simple or 
zero.

{\bf Case $m, n \neq 0$.} Given a bipartition $\lambda$ such that 
$\ell(\lam^{\bullet}) \leq m, \ell(\lam^{\circ})\leq n$, let $$\mathit{L}(\lam): 
= \mathit{L}(a_1, a_2, \ldots,a_m, b_1, b_2, \ldots, b_n)$$ and
$$\mathit{K}(\lam): = \mathit{K}(a_1, a_2, \ldots,a_m, b_1, b_2, \ldots, b_n)$$
where $a_i := \lam^{\bullet}_i$, $b_i:= - \lam^{\circ \vee}_{n-i+1}$ for any 
$i$. 

We will denote: $\vec{\lam}:=\sum_{i=1}^m a_i\delta_i+\sum_{j=1}^n 
b_{j}\delta_{m+j}$.

For other bipartitions we put $\mathit{L}(\lam):=0$, $\mathit{K}(\lam):=0$.

{\bf Case $n=0$.} In this case $Rep(\gl(m|0))$ is the semisimple category of all 
rational representations of $GL(m)$.
Given a bipartition $\lambda$ such that $\ell(\lam^{\bullet}) + 
\ell(\lam^{\circ}) \leq m$, we define:
$$\mathit{L}(\lam): = \mathit{L}(a_1, a_2, \ldots,a_m)$$ where $a_i := 
\lam^{\bullet}_i$ for $i\leq \ell(\lam^{\bullet})$, and $a_{m-i+1} := - 
\lam^{\circ}_i$ for $i \leq \ell(\lam^{\circ})$. 

We will denote: $\vec{\lam}:=\sum_{i=1}^m a_i\delta_i$.

For other bipartitions we put $\mathit{L}(\lam):=0$.

\begin{example}
 The bipartition $\lam = (\young(\bullet,\bullet), \yng(1,1))$ 
corresponds to 
$\mathit{L}(1,1,0,\ldots,0,-1, -1)$ if $m\geq 4$, $n=0$. On the other hand, 
$\lam$ corresponds to $\mathit{L}(1,1,0,\ldots,0,-2)$ if $m, n\geq 2$.
\end{example}

This notation is convenient in view of the (straightforward) lemma below:
\begin{lemma}
 Let $\lambda:=(\lam^{\bullet}, \lam^{\circ})$ be a bipartition, and consider 
$m, n$ such that either $\ell(\lam^{\bullet}) \leq m, \ell(\lam^{\circ})\leq n$ 
or $\ell(\lam^{\bullet}) + \ell(\lam^{\circ}) \leq m$, $n=0$.
 
 For any such $m, n$, let $W = \bC^{m|n}$ be the standard representation of 
$\gl(m|n)$. Then the maximal weight of $S^{\lam^{\bullet}} W \otimes 
S^{\lam^{\circ}} W^*$ is $$\vec{\lam}:=\sum_{i=1}^m a_i\delta_i+\sum_{j=1}^n 
b_{j}\delta_{m+j}$$ where $a_i, b_i$ are defined above. This weight occurs with 
multiplicity $1$. 
 
 In other words, there exists a unique (up to scalar) non-zero homomorphism 
$$\mathit{K}(a_1, \ldots, a_m, b_1, \ldots, b_n) \longrightarrow 
S^{\lam^{\bullet}} W \otimes S^{\lam^{\circ}} W^*$$
\end{lemma}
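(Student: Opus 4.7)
The plan is to reduce the claim to computing the maximal weight of each tensor factor separately. Since the maximal weight of a tensor product equals the sum of the maximal weights of the factors, and its weight space is the tensor product of the highest weight spaces (so has multiplicity one whenever each factor has a one-dimensional highest weight space), both assertions reduce to showing: (i) $S^{\lam^{\bullet}}W$ has maximal weight $\sum_i a_i\delta_i$ with multiplicity one, and (ii) $S^{\lam^{\circ}}W^*$ has maximal weight $\sum_j b_j\delta_{m+j}$ (in the $m,n\neq 0$ case; the analogous expression when $n=0$) with multiplicity one. The statement about $\mathit{K}(\vec{\lam})$ then follows by Frobenius reciprocity applied to the parabolic $\mathfrak{p}=\gl(m)\oplus\gl(n)\oplus\mathfrak{u}^+$, where $\mathfrak{u}^+$ is the odd upper nilradical: the hom space $\Hom_{\gl(m|n)}(\mathit{K}(\vec{\lam}),M)$ computes the multiplicity of the $\gl(m)\oplus\gl(n)$-irreducible of highest weight $\vec{\lam}$ inside $M^{\mathfrak{u}^+}$, and because $\vec{\lam}$ is the overall maximal weight of $M$ with multiplicity one, any vector of this weight is automatically annihilated by the full positive nilradical of $\gl(m|n)$, so this hom space is one-dimensional.

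For $S^{\lam^{\bullet}}W$, I will invoke the standard description of Schur functors on super vector spaces via super-semistandard tableaux (Berele--Regev): the weight decomposition is indexed by fillings of $\lam^{\bullet}$ with entries in the ordered alphabet $w_1<w_2<\ldots<w_{m+n}$, where the even symbols $w_1,\ldots,w_m$ are weakly increasing along rows and strictly increasing down columns, while the odd symbols $w_{m+1},\ldots,w_{m+n}$ are strictly increasing along rows and weakly increasing down columns. A maximal-weight filling uses the smallest indices; the hypothesis $\ell(\lam^{\bullet})\leq m$ (present in both cases of the lemma) allows one to use only even labels, placing $w_i$ throughout row $i$. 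This is manifestly the unique maximal filling, and its weight is $\sum_{i=1}^m \lam^{\bullet}_i\delta_i$, matching the contribution of $\lam^{\bullet}$ to $\vec{\lam}$.

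The main obstacle is the computation for $S^{\lam^{\circ}}W^*$, where the conjugate partition $\lam^{\circ\vee}$ enters in the $m,n\neq 0$ case. Here the maximal weight is attained by using the \emph{odd} dual labels $w^*_{m+1},\ldots,w^*_{m+n}$ as much as possible, since the corresponding weights $-\delta_{m+j}$ dominate $-\delta_i$ for $i\leq m$. The row-strictness for odd labels limits each row to at most one instance of any given odd label, and combined with the strict-increase requirement along rows this forces $w^*_{m+n}$ into the rightmost cell of every non-empty row, $w^*_{m+n-1}$ into the next-rightmost cell of every row of length at least two, and so on; a direct count yields exactly $\lam^{\circ\vee}_j$ copies of $w^*_{m+n-j+1}$, giving the claimed weight $\sum_j b_j\delta_{m+j}$. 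Uniqueness of this filling follows from an induction sweeping columns from right to left. The $n=0$ case is handled analogously, with the role of the odd labels played by the largest-index (even) dual labels $w^*_m, w^*_{m-1},\ldots$; the combined hypothesis $\ell(\lam^{\bullet})+\ell(\lam^{\circ})\leq m$ guarantees that the indices used in this factor do not collide with those appearing in the first factor.
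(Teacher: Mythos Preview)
The paper does not supply a proof; the lemma is introduced with the phrase ``the (straightforward) lemma below'' and left without argument. Your approach---splitting into the two tensor factors, identifying the maximal weight of each via the Berele--Regev basis of super-semistandard tableaux, and then invoking Frobenius reciprocity for the Kac module---is the standard way to verify such a statement and is correct.

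One technical caveat is worth flagging. Your optimal filling of $\lam^{\circ}$ by the odd dual labels $w^*_{m+1},\dots,w^*_{m+n}$ needs each row of $\lam^{\circ}$ to have length at most $n$, i.e.\ $\lam^{\circ}_1\le n$; this is strictly stronger than the stated hypothesis $\ell(\lam^{\circ})\le n$. In fact the lemma as literally stated already fails without this extra assumption: for $\lam=(\varnothing,(2))$ and $m=n=1$, the module $S^{(2)}(\bC^{1|1})^*=\mathrm{Sym}^2(\bC^{1|1})^*$ has maximal weight $-\delta_1-\delta_2$, whereas the formula gives $\vec{\lam}=-\delta_2$, a weight that does not even occur. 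This is harmless for the paper, which only uses the lemma in the regime $m,n\gg|\lam|$ (see the discussion around Lemma~\ref{lem:V_lam_highest_weight}); under that implicit assumption both the lemma and your argument are perfectly fine.
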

% 
% % The case $m=0$ is similar.
% %  in the following way: 
% % 
% % {\bf Case 1.} Let $m, n \geq 1$. If we choose 
% % 
% $\varepsilon_1-\varepsilon_2,\dots,\varepsilon_m-\delta_1,\delta_1-\delta_{2},
% \dots,\delta_{n-1}-\delta_{n}$ to be the simple roots for $\gl(m|n)$, then 
% $\mathit{L}(\lam)$ will have a dominant highest weight
% % $$\vec{\lambda}=\sum_{i=1}^m \lam^{\bullet}_i\varepsilon_i-\sum_{j=1}^n 
% \lam^{\circ}_{n-j}\delta_j$$
% % Thus $\mathit{L}(\lam)$ is well-defined for bipartitions $\lam$ where 
% $\ell(\lam^{\bullet}) \leq m, \ell(\lam^{\circ})\leq n$ ($\ell$ stands for 
% length of bipartition). For other bipartitions we let $\mathit{L}(\lam):=0$.
% % 
% % :  Choosing simple roots $\{\varepsilon_i -\varepsilon_{i+1}\}_i$, 
% $\mathit{L}(\lam)$ will have a dominant highest weight
% % $$\vec{\lambda}=\sum_{i\geq 1} \lam^{\bullet}_i\varepsilon_i-\sum_{j\geq 1} 
% \lam^{\circ}_j\varepsilon_{m-j+1} $$ This is well-defined when 
% $\ell(\lam^{\bullet}) + \ell(\lam^{\circ}) \leq m$. Otherwise we let 
% $\mathit{L}(\lam):=0$.
% 
% %super-vector spaces on which $\gl(m|n)$ acts so that the even element  ($1$ 
% appearing $m$ times and $-1$ appearing $n$ times\Inna{TBF}
\section{Preliminaries}\label{sec:prelim} 

\subsection{Deligne categories 
\texorpdfstring{$\mathcal{D}_t$}{}}\label{ssec:Deligne_cat}

We briefly recall some facts about the Deligne categories $\mathcal{D}_t$ (also 
known as $\uRep(GL_t)$). 

Let $t \in \bC$.
\subsubsection{Construction}\label{sssec:Deligne_constr}
Consider the free SM $\bC$-linear category $\mathcal{OB}$ generated by one 
dualizable object $V$; this is the oriented Brauer category, as described in 
\cite{BCNR}. Next, consider the quotient ${\cD^{0}_t}$ of this category by the 
relation $\dim(V) =t$ (for any $t \in \bC$). The category ${\cD^{0}_t}$ (denoted 
by Deligne as $\uRep_0(\gl_t)$, see \cite[Section 10]{D2}) is freely generated, 
as a SM $\bC$-linear category, by one dualizable object of dimension $t$. We 
denote by $V_t$ and $V^*_t$ the $t$-dimensional generator and its dual. The 
objects in this category will be mixed tensor powers $V_t^{\otimes p} \otimes 
{V_t^*}^{\otimes q}$, and the $\Hom$-spaces will have a diagrammatic basis, 
generated by tensor products of the morphisms $\id_{V_t}, \sigma_{V_t, V_t}, 
\sigma_{V_t, V_t^*}, ev_{V_t}, coev_{V_t}$ and their duals, with the relation 
$$\dim V_t :=  ev_{V_t} \circ \sigma_{V_t, V_t^*} \circ coev_{V_t} = t.$$ 
% %(see 
% %\cite{D, CW} for details)
% %; we will also denote $X_t^{p, q} := X_t^{\otimes p} \otimes {X_t^*}^{\otimes 
% % q}$ the mixed tensor powers of $X_t$.
% %from $\mathcal{D}^\cir\sigma_t$
% % An explicit description of this category can be found in . 

We will denote by $\mathcal{D}_t$ the Karoubi additive envelope of the above 
``free'' category, which is obtained  by adding formal direct sums and images of 
idempotents. The category $\mathcal{D}_t$ is a Karoubian additive rigid SM 
category, also called the Deligne's category $\uRep(GL_t)$. Its structure is 
studied in \cite{D2, CW}, and it is that the indecomposable objects of 
$\mathcal{D}_t$ (up to isomorphism) are parametrized by the set of all 
bipartitions. We will denote the indecomposable object corresponding to the 
bipartition $\lam$ by $\mathbf{T}(\lam)$.

\begin{example}
 For any $t \in \bC$, $\mathbf{T}(\InnaC{\varnothing}, \InnaC{\varnothing}) = 
\triv$, $\mathbf{T}(\InnaD{\young(\bullet)}, \InnaC{\varnothing}) = V_t$, 
$\mathbf{T}( 
\InnaC{\varnothing}, \yng(1)) = V_t^*$. 
 
 When $t \neq 0$, we have: $\mathbf{T}(\InnaC{\varnothing}, \InnaC{\varnothing}) 
\oplus \mathbf{T}(\InnaD{\young(\bullet)}, \yng(1)) = V_t \otimes V_t^*$. On the 
other hand, 
when $t=0$, we have: $\mathbf{T}(\InnaD{\young(\bullet)}, \yng(1)) = V_t \otimes 
V_t^*$.
\end{example}

\subsubsection{Universal property}\label{sssec:univer_prop_Deligne}
The category $\mathcal{D}_t$ possesses a universality property:
given a Karoubian additive rigid SM category $\C$, together with a fixed object 
$V \in \C$ of dimension $t$, there exists an essentially unique additive SM 
functor $\mathbf{S}_V: \mathcal{D}_t \longrightarrow \C$ such that $V_t \mapsto 
V$.

In particular, for $t \in \bZ$ and for any $m, n \in \bZ_+$ such that $m-n =t$, 
there exists a SM functor $$\mathbf{S}_{m, n}: \mathcal{D}_t \longrightarrow 
Rep(\gl(m|n)), \;\; V_t \mapsto \bC^{m|n}$$

In the special case $t = d \in \bZ_+$, the functor $\mathbf{S}_{d, 0}$ takes the 
indecomposable object $\mathbf{T}(\lam)$ to the $GL(d)$-module 
$\mathit{L}(\lam)$.

\subsubsection{Abelian version}\label{sssec:abelian_Deligne}
The category $\mathcal{D}_t$ is a priori not necessarily abelian (thus not a 
tensor category). Yet it turns out that for $t \notin \bZ$, this category is 
indeed abelian and even semisimple. 

For $t \in \bZ$, this is not the case. Fortunately, it turns out the one can 
construct a tensor category $\cV$ into which $\mathcal{D}_t$ embeds as a full 
rigid SM subcategory. We will set $\cV := \mathcal{D}_t$ when $t \notin \bZ$. 

The collection $\{\cV, Rep(\gl(m|n)) \rvert m-n =t\}$ acts as a system of 
abelian envelopes of the Deligne category $\mathcal{D}_t$ (see \cite{EHS} for 
details).

 When $t\in \bZ$, the category $\cV$ is constructed in \cite{EHS} as a certain 
limit of categories $Rep(\gl(m|n))$ with $m-n = t$. For any $m, n$ such that 
$m-n =t$, there exists an additive SM functor (not exact!) $$\mathcal{F}_{m|n}: 
\cV \longrightarrow Rep(\gl(m|n))$$ such that the composition of $ 
\mathcal{F}_{m|n}$ with the embedding $\mathcal{D}_t \hookrightarrow \cV$. 

The functors $\mathcal{F}_{m|n}$ are ``local'' equivalences: the categories 
$\cV$, $Rep(\gl(m|n))$ have natural $\bZ_+$-filtrations on objects which are 
preserved by the functors $\mathcal{F}_{m|n}$, and the latter induce an 
equivalence between each filtra $\mathcal{V}_t^k$ and $Rep^k(\gl(m|n))$ for $m, 
n>>k$. 

In the category $\cV$ we distinguish three types of objects, all defined up to 
isomorphism: the simple objects, denoted by $\mathbf{L}(\lam)$, the ``standard 
objects'', denoted by $\mathbf{V}(\lam)$ and the objects $\mathbf{T}(\lam)$. The 
three types are parametrized by arbitrary bipartitions. The name ``standard'' 
comes from the fact that the subcategories $\mathcal{V}_t^k$ are highest-weight 
categories, with objects $\mathbf{L}(\lam), \mathbf{V}(\lam), \mathbf{T}(\lam)$ 
(for $\abs{\lam^{\bullet}}+ \abs{\lam^{\circ}} \leq k$) playing the roles of 
simples, standard and tilting objects respectively\footnote{See \cite[Section 
8.5]{EHS} about the ``local'' highest weight structure of $\cV$}. 

\begin{remark}
 When $t \notin \bZ$, the objects $\mathbf{L}(\lam), \mathbf{V}(\lam), 
\mathbf{T}(\lam)$ are defined to be isomorphic.
\end{remark}

Finally, let us say a few words about the $\gl(m|n)$-modules 
$\mathit{V}(\lam):=\mathcal{F}_{m|n}(\mathbf{V}(\lam))$ in $Rep^k(\gl(m|n))$ 
(here $k \geq \abs{\lam}$ is fixed, $m, n >>k$, $m-n=t$). These modules appear 
as submodules of $S^{\lam^{\bullet}} W \otimes S^{\lam^{\circ}} W^*$, where $W = 
\bC^{m|n}$ is the standard representation of $\gl(m|n)$. 

In fact, we have the following description (see \cite{EHS}):
\begin{lemma}\label{lem:V_lam_highest_weight}
 $\mathit{V}(\lam)$ is a highest weight module, with highest weight 
$\vec{\lam}:=\sum_{i=1}^m a_i\delta_i+\sum_{j=1}^n b_{j}\delta_{m+j}$ where $a_i 
:= \lam^{\bullet}_i$, $b_i:= -\lam^{\circ \vee}_{n-i+1}$ for any $i$.

 In other words, there exists a unique (up to scalar) non-zero homomorphism 
$$\InnaD{\mathit{K}(a_1, \ldots, a_m, b_1, \ldots, b_n)} \longrightarrow 
S^{\lam^{\bullet}} W \otimes S^{\lam^{\circ}} 
W^*$$ with image $\mathit{V}(\lam)$. In fact, $\mathit{V}(\lam)$ is the maximal 
quotient of $\mathit{K}(a_1, \ldots, a_m, b_1, \ldots, b_n)$ lying in 
$Rep^k(\gl(m|n))$.
\end{lemma}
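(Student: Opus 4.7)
The plan is to deduce the lemma from two inputs: the local equivalence $\mathcal{F}_{m|n}\colon \mathcal{V}_t^k \stackrel{\sim}{\longrightarrow} Rep^k(\gl(m|n))$ valid for $m,n \gg k \geq \abs{\lam}$, and the preceding lemma. Since $\mathbf{V}(\lam)$ is by construction the standard object labeled by $\lam$ in the local highest-weight structure on $\mathcal{V}_t^k$ (see \cite[Section~4.7]{EHS}), its image $\mathit{V}(\lam)$ is a standard object of the truncated highest-weight category $Rep^k(\gl(m|n))$, and hence automatically a highest-weight module. It therefore only remains to identify its highest weight as $\vec{\lam}$ and to upgrade this to the stated Kac-module assertions.

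For the highest-weight identification, I would use the embedding $\mathit{V}(\lam) \hookrightarrow S^{\lam^{\bullet}}W \otimes S^{\lam^{\circ}}W^*$ recalled just before the lemma. The preceding lemma asserts that $\vec{\lam}$ is the unique maximal weight of the right-hand side and occurs there with multiplicity one, so the highest weight of $\mathit{V}(\lam)$ is a priori at most $\vec{\lam}$ in dominance order. To pin down equality, consider the (unique up to scalar) non-zero homomorphism $\phi \colon \mathit{K}(\lam) \to S^{\lam^{\bullet}}W \otimes S^{\lam^{\circ}}W^*$ furnished by the preceding lemma; its image $\Im \phi$ is a non-zero quotient of $\mathit{K}(\lam)$ of highest weight $\vec{\lam}$ sitting inside the tensor product, and in particular belongs to $Rep^k(\gl(m|n))$. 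By the general characterization of the standard object at weight $\vec{\lam}$ in the truncated category (as the maximal quotient of $\mathit{K}(\lam)$ surviving the truncation), $\Im \phi$ factors through this standard object. Both this standard object and $\mathit{V}(\lam)$ embed into the same tensor product, and the one-dimensionality of the $\vec{\lam}$-weight space there forces the matching of labels across the local equivalence to send $\lam$ to $\vec{\lam}$; hence $\mathit{V}(\lam)$ is the standard object at $\vec{\lam}$ and its highest weight is $\vec{\lam}$.

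Once the highest weight has been identified, the remaining assertions follow at once: the maximal-quotient statement is the defining property of a standard object in a truncated highest-weight category, while uniqueness up to scalar of the homomorphism $\mathit{K}(\lam) \to S^{\lam^{\bullet}}W \otimes S^{\lam^{\circ}}W^*$ is already part of the preceding lemma. The main obstacle is the preliminary input that $\mathbf{V}(\lam)$ (equivalently, $\mathit{V}(\lam)$) really does embed in $S^{\lam^{\bullet}}V_t \otimes S^{\lam^{\circ}}V_t^*$. This is the only non-formal step of the argument and requires unpacking the specific realization of the standard objects of $\cV$ from the construction of \cite{EHS}, rather than relying solely on their abstract existence as standard objects in the local highest-weight structure.
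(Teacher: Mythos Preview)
The paper does not prove this lemma in-text; it is stated with an explicit reference to \cite{EHS}, where it is established as part of the construction of $\cV$ and its local highest-weight structure. Your proposal is a reasonable sketch of how one extracts the statement from that construction, and the two ingredients you isolate---the local equivalence $\mathcal{V}_t^k \simeq Rep^k(\gl(m|n))$ and the preceding weight lemma on $S^{\lam^{\bullet}}W \otimes S^{\lam^{\circ}}W^*$---are exactly the ones the paper (implicitly, via the citation) is pointing to.

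One point deserves care. In your middle paragraph you invoke ``the general characterization of the standard object at weight $\vec{\lam}$ in the truncated category (as the maximal quotient of $\mathit{K}(\lam)$ surviving the truncation)'' in order to match $\Im\phi$ with $\mathit{V}(\lam)$. But that maximal-quotient description is precisely one of the assertions of the lemma, and it is not an automatic consequence of abstract highest-weight category theory: $Rep^k(\gl(m|n))$ is not cut out by an order ideal of the weight poset of $Rep(\gl(m|n))$, so the relation between its standards and the Kac modules has to be argued separately. The non-circular route is the one you flag in your last paragraph: in \cite{EHS} the standard objects of $\cV$ are \emph{constructed} as specific submodules of $S^{\lam^{\bullet}}V_t \otimes S^{\lam^{\circ}}V_t^*$, and under $\mathcal{F}_{m|n}$ this realization visibly contains the $\vec{\lam}$-weight line. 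Once that is in hand, the preceding lemma gives the highest-weight identification directly, and the maximal-quotient statement follows from the projectivity of $\mathit{K}(\lam)$ in the relevant Serre subcategory. So your diagnosis of where the substantive input from \cite{EHS} is needed is correct; just be sure to use that input \emph{before} the label-matching step rather than after.
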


\subsubsection{Deligne categories with formal 
parameter}\label{sssec:formal_param}

One can construct a similar Deligne category for a formal variable $T$ instead 
of complex parameter $t$; the obtained category $\mathcal{D}_T$ is a 
$\bC\InnaD{((T-t))}$-linear semisimple tensor category. The simple objects (up 
to 
isomorphism) are again labeled by all bipartitions; by abuse of notation, we 
will denote them by $\mathbf{T}(\mu)$ as well.

The connection between the Deligne categories $\mathcal{D}_t$ and 
$\mathcal{D}_T$ is manifested by the existence of a homomorphism of rings with 
scalar products
$$Lift_t: K_0(\mathcal{D}_t) \rightarrow K_0(\mathcal{D}_T)$$ ($K_0$ stands for 
the split Grothendieck ring) which respects duals and takes $V_t$ to $V_T$. This 
map was defined in \cite[Section 6]{CW}.

The existence of this map is based on the fact that for any $r', s' \in \bZ_+$, 
any idempotent $e$ in the walled Brauer algebra $$Br_{\bC}(r', s') = 
\End_{\mathcal{D}_t}(V_t^{\otimes r'} \otimes V_t^{* \otimes s'})$$ over $\bC$ 
can be lifted to an idempotent $\tilde{e}$ in the walled Brauer algebra 
$Br_{\bC\InnaD{((T-t))}}(r', s') = \End_{\mathcal{D}_T}(V_T^{\otimes r'} 
\otimes 
V_T^{\otimes s'})$ over $\bC\InnaD{((T-t))}$, so that $\tilde{e}\rvert_{T=t} = 
e$ (cf. 
\cite[Section 3.2]{CO}). 
% % 
% % % In particular, consider the idempotent $e^{\lam}_{\mu} \in Br_{\bC}(r+1, s)$ 
% % which is the projector onto the multiplicity space of $T^{\mu}$ in $V\otimes 
% % \mathbf{T}(\lam)\subset V_t^{\otimes r+1} \otimes V_t^{* \otimes s}$ (by abuse 
% % of notation, we will denote by $e^{\lam}_{\mu}$ the corresponding idempotent in 
% % $Br_{\bC\InnaD{((T-t))}}(r+1, s)$ as well)\footnote{We stress that these 
idempotents 
% % depend on $\lambda$.}. 
We will denote by $D^{\lam}_{\mu}(t)$ the multiplicity of $\mathbf{T}(\mu)$ 
inside $Lift_t(\mathbf{T}(\lam))$. This multiplicity has been shown to be either 
$1$ or $0$, and \cite[Section 6]{CW} contains an explicit algorithm for 
computing $D^{\lam}_{\mu}$ (see also Section \ref{sec:mult}).

For a fixed bipartition $\mu$ and $t \notin \{0, \pm 1, \pm 2, \ldots, \pm 
\abs{\mu^{\bullet}} + \abs{\mu^{\circ}} \}$, it was proved that 
$Lift_t(\mathbf{T}(\mu)) =\mathbf{T}(\mu)$, and thus $D^{\lam}_{\mu}(t) = 
\delta_{\lam, \mu}$ for almost all $t$.

\subsection{Lie algebra \texorpdfstring{$\sll_{\bZ}$}{sl 
infinity}}\label{ssec:sl_infty}
Let $\sll_{\bZ}$ be the Lie algebra of $\bZ \times \bZ$-matrices with 
finitely-many non-zero entries and trace zero. This Lie algebra is generated by 
the elements $f_a := E_{a+1, a}$ and $e_a:=E_{a, a+1}$, where $a \in \bZ$.

We will use the same definitions as in \cite[Section 2.2]{BLW}. The weight 
lattice of $\sll_{\bZ}$ is $\bigoplus_{a \in \bZ} \bZ \varpi_a$ where $\varpi_a$ 
is the $a$-th fundamental weight. The $a$-th simple root is $$\alpha_a = 
2\varpi_a - \varpi_{a-1} - \varpi_{a+1}$$ These span the root lattice inside the 
weight lattice. We define a dominance order $\geq$ on the lattice of weights by 
setting $\beta \geq \gamma$ whenever $\beta - \gamma$ is a finite combination of 
simple roots with non-negative integral coefficients.

We denote by $\bC^{\bZ} := span_{\bC} \{u_i \rvert i \in \bZ\}$ the tautological 
representation of $\sll_{\bZ}$. We have: $$f_a.u_i = \delta_{a, i} u_{a+1}, 
\;\;\; e_a.u_i = \delta_{a+1, i} u_{a}$$

The weight of the vector $u_i$ is $\varpi_i - \varpi_{i-1}$.

\subsection{Fock space}\label{ssec:Fock_space}

The {\it Fock space} is a $\bC$-vector space with a basis consisting of infinite 
wedges $u_I:= u_{i_0} \wedge u_{i_{-1}}   \wedge u_{i_{-2}}  \wedge \ldots$ 
($i_{-s} \in \bZ$ for any $s$) where $I:=(i_0, i_{-1}, i_{-2},  \ldots)$ is a 
strictly decreasing infinite sequence satisfying: $i_{-s} = -s$ for $s >>0$.

This space has an obvious action of $\sll_{\bZ}$ on it. 

We will also use another notation for this basis, indexing by partitions of 
arbitrary size:
$$v_{\nu} := u_{\nu_1} \wedge u_{\nu_2 -1} \wedge \ldots \wedge u_{\nu_s -s +1} 
\wedge\ldots$$
In the new notation the action of $\sll_{\bZ}$ on $\mathfrak{F}$ is given by 
$$f_a.v_{\nu} := v_{\nu + \square_a}, \; \; e_a.v_{\nu} := v_{\nu - \square_a}$$ 
for any partition $\nu $ and any $a \in \bZ$.

Note that $v_{\nu} \in \mathfrak{F}$ is a weight vector, i.e. $h_a := [e_a, f_a] 
\in \sll_{\bZ}$ acts on $v_{\nu} \in \mathfrak{F}$ by a scalar $n_a(\nu) \in \{ 
0, \pm 1\}$, where $$n_a(\nu) = \begin{cases}
1  &\text{  if  } \nu + \square_a \neq \emptyset, \\
-1 &\text{  if  } \nu - \square_a \neq \emptyset, 
\\ 0 &\text{  else}

                             \end{cases}
$$ We denote the weight $(n_a(\nu))_{a\in \bZ}$ of $v_{\nu}$ by $\omega_{\nu}$.

\subsubsection{Twisted and shifted duals}\label{sssec:Fock_duals}
We will also consider the {\it twisted dual Fock space representation 
$\mathfrak{F}^{\vee}$} of $\sll_{\bZ}$.

The space $\mathfrak{F}^{\vee}$ is isomorphic to $\mathfrak{F}$ as a vector 
space, with the action of $\sll_{\bZ}$ given by $f_a.v_{\nu} := v_{\nu- 
\square_{-a}}$, $ e_a.v_{\nu} :=v_{\nu+ \square_{-a}}$.

Finally, \InnaA{for $t \in \bZ$}, we will consider the \InnaA{``shifted dual'' 
representation $\mathfrak{F}^{\vee}_t$ of $ \sll_{\bZ}$: the underlying vector 
space is $\mathfrak{F}^{\vee}$, with $f_a, e_a$ ($a \in \bZ$) acting by 
$f_a.v_{\nu} := v_{\nu- \square_{-\InnaC{(a+t)}}}$, $ e_a.v_{\nu} := v_{\nu+ 
\square_{-\InnaC{(a+t)}}}$.}

\subsubsection{Fock space as a ``stable'' inverse 
limit}\label{sssec:Fock_inv_lim}
We will now show that the space $\mathfrak{F}$ is a ``stable'' inverse limit of 
a system of subspaces of the exterior powers $\wedge^n \bC^{\bZ}$, $n \geq 0$. 
We will explain the precise meaning below.

The space $\wedge^n \bC^{\bZ}$ has a basis $u_I:= u_{i_0} \wedge u_{i_{-1}}   
\wedge \ldots  \wedge u_{i_{-n}}$ indexed by descreasing sequences $I$ of length 
$n$. We have $\sll_{\bZ}$-equivariant maps
$$ \phi_{n+1}:\wedge^{n+1} \bC^{\bZ} \longrightarrow \wedge^n \bC^{\bZ}, \;\; 
u_{i_0} \wedge u_{i_{-1}} \wedge \ldots \wedge u_{i_{-n-1}} \mapsto u_{i_0} 
\wedge u_{i_{-1}}   \wedge \ldots  \wedge u_{i_{-n}}$$ and
$$\pi_n:\mathfrak{F} \longrightarrow \wedge^n \bC^{\bZ}, \;\; u_I \mapsto 
u_{i_0} \wedge u_{i_{-1}}   \wedge \ldots  \wedge u_{i_{-n}}$$

Now, these maps define an inverse limit $\varprojlim_{n \to \infty} \wedge^n 
\bC^{\bZ}$ and a map $ \mathfrak{F} \longrightarrow \varprojlim_{n \to \infty} 
\wedge^n \bC^{\bZ}$, but this map will not be an isomorphism (e.g. $u_1 \wedge 
u_0 \wedge u_{-1} \wedge \ldots$ will not lie in the image).

To fix this, consider a system of $\bZ_+$-parametrized subspaces in $\wedge^n 
\bC^{\bZ}$ and in $\mathfrak{F}$ given by the so-called ``energy'' function on 
the basis:

Let the {\it energy} of the vector $u_I$ be $\sum_s i_{-s} +s$. Then the 
subspaces $\wedge^n \bC^{\bZ}_{(k)}$ and $\mathfrak{F}_{(k)}$ are defined as the 
span of vectors $u_I$ of energy at most $k$, for which $i_{-s} +s \geq 0$ for 
all $s$.
\begin{remark}
  In the latter case we have: $\mathfrak{F}_{(k)} = span_{\bC}\{v_{\lam} \rvert 
\abs{\lam} \leq k \}$.

                                                       \end{remark}
\begin{remark}
 Notice that the above subspaces are not preserved by the action of 
$\sll_{\bZ}$.
\end{remark}

  Then $\mathfrak{F} = \varinjlim_{k \to \infty} \mathfrak{F}_{(k)}$, and the 
maps $\phi_{n+1}$, $\pi_{n}$ preserve these systems of subspaces. Moreover, 
$\pi_n \rvert_{\mathfrak{F}_{(k)}}$, $ \phi_{n+1}\rvert_{ \wedge^{n+1} 
\bC^{\bZ}_{(k)}}$ become isomorphisms for $n \geq k-1$, and thus 
\begin{equation}\label{eq:Fock_limit}
   \mathfrak{F} \cong \varinjlim_{k \to \infty} \varprojlim_{n \to \infty} 
\wedge^n \bC^{\bZ}_{(k)}  \end{equation}
Note that this is not an isomorphism of $\sll_{\infty}$-modules.
\begin{remark}
 This isomorphism has a categorical version, describing the category $Pol$ of 
``stable'' polynomial $GL$-modules as a special limit of the categories of 
polynomial $GL_n$-modules. See \cite{HY} for details.
\end{remark}

% % % Note to self: this is exactly the Fock space from Kac's Bombay Lectures 
% % % Chapter 4.2.

\section{Weight diagrams}\label{sec:weight_diag}
In this section we briefly recall the definitions of weight diagrams given in 
\cite{EHS}, \cite{CW}. The weight diagrams were originally defined and used in 
the representation theory of superalgebras and Khovanov arc algebras (see 
\cite{BS, MS}), and provide a powerful combinatorial tool when studying the 
highest-weight structure of the categories $Rep(\gl(m|n))$ and (locally) $\cV$.

\subsection{Two types of diagrams}
Let $\lambda$ be a bipartition, and let $t \in \bC$. Below we define two 
diagrams corresponding to $\lam$, called $d_{\lam}$ and $d'_{\lam}$, which 
represent a labeling of integers by symbols $\times, \bigcirc, <, >$. 

For the weight diagram $d_{\lam}$, we will use the same notation as in 
\cite{EHS}. This notation differs slightly from the notation in \cite{BS, MS}: 
in \cite{BS} the symbols are permuted (see \cite{MS} for a dictionary) and the 
diagrams are shifted.

% % The diagram $d_{\lam}$ is defined as follows: 
Consider two infinite sequences $C:=\{\lambda^{\bullet}_i+t-i \}_{i \geq 1}$, 
$D:=\{\lambda^{\circ}_i-i\}_{i \geq 1}$. Consider the map $f_\lambda:\mathbb 
Z\to\{\times,>,<,\circ\}$ defined by
\begin{equation}\label{eq:weightdiagrams}
f_\lambda(s)=\begin{cases} \bigcirc,\,\text{if}\,s \notin C \cup D \\
>,\,\text{if}\,s \in C \setminus D\\
<,\,\text{if}\,s \in D \setminus C\\
\times,\,\text{if}\,s \in C \cap D.
\end{cases}
\end{equation} To the map $f_{\lam}$ we associate a diagram $d_{\lam}$ where 
every integer $s$ is labeled by $f_{\lam}(s)$.
\begin{example}
 Let $t=0$, $\lam = (\InnaC{\varnothing}, \InnaC{\varnothing})$. The diagram 
$d_{\lam}$ is then

 $$ 
 \xymatrix{ &\stackrel{\times}{-5} &\stackrel{\times}{-4} &\stackrel{\times}{-3} 
&\stackrel{\times}{-2}  &\stackrel{\times}{-1} &\stackrel{\bigcirc}{0}  
&\stackrel{\bigcirc}{1} &\stackrel{\bigcirc}{2} &\stackrel{\bigcirc}{3} 
&\stackrel{\bigcirc}{4} &\stackrel{\bigcirc}{5} }$$

\end{example}
\begin{example}
 Let $t=1$, $\lam = (\InnaD{\young(\bullet\bullet)}, \yng(2))$. The diagram 
$d_{\lam}$ is then

 $$ 
 \xymatrix{ &\stackrel{\times}{-5} &\stackrel{\times}{-4} &\stackrel{\times}{-3} 
&\stackrel{\times}{-2}  &\stackrel{>}{-1} &\stackrel{\bigcirc}{0}  
&\stackrel{<}{1} &\stackrel{>}{2} &\stackrel{\bigcirc}{3} 
&\stackrel{\bigcirc}{4} &\stackrel{\bigcirc}{5} }$$

\end{example}
Note that $f_\lambda(s)=\bigcirc$ for all $s>>0$. If $t \notin \bZ$, then the 
diagram $d_{\lam}$ contains only the symbols $\bigcirc, <$. If $t \in \bZ$, then 
$f_\lambda(s)=\times$ for all $s<<0$.

\mbox{}

Next, consider the (infinite) sets $C' := \{\lambda^{\bullet}_i+t-i \}_{i \geq 
1}$, $D':=\bZ \setminus \{i-\lambda^{\circ}_i-1\}_{i \geq 1}$. 

Consider the map $f'_\lambda:\mathbb Z\to\{\times,>,<,\circ\}$ defined by
\begin{equation}\label{eq:CWdiagrams}
f'_\lambda(s)=\begin{cases} \bigcirc,\,\text{if}\,s \notin C' \cup D' \\
>,\,\text{if}\,s \in C' \setminus D'\\
<,\,\text{if}\,s \in D' \setminus C'\\
\times,\,\text{if}\,s \in C \cap D.
\end{cases}
\end{equation} 
To the map $f'_{\lam}$ we associate a diagram $d'_{\lam}$ where every integer 
$s$ is labeled by $f'_{\lam}(s)$.
\begin{example}
 Let $t=0$, $\lam = (\InnaC{\varnothing}, \InnaC{\varnothing})$. The diagram 
$d'_{\lam}$ is then

 $$ 
 \xymatrix{ &\stackrel{\times}{-5} &\stackrel{\times}{-4} &\stackrel{\times}{-3} 
&\stackrel{\times}{-2}  &\stackrel{\times}{-1} &\stackrel{\bigcirc}{0}  
&\stackrel{\bigcirc}{1} &\stackrel{\bigcirc}{2} &\stackrel{\bigcirc}{3} 
&\stackrel{\bigcirc}{4} &\stackrel{\bigcirc}{5} }$$

\end{example}
\begin{example}
 Let $t=1$, $\lam = (\InnaD{\young(\bullet\bullet)}, \yng(2))$. The diagram 
$d'_{\lam}$ is then

 $$ 
 \xymatrix{ &\stackrel{\times}{-5} &\stackrel{\times}{-4}  
&\stackrel{\times}{-3}  &\stackrel{>}{-2} &\stackrel{\times}{-1}  
&\stackrel{<}{0} &\stackrel{\bigcirc}{1} &\stackrel{>}{2} 
&\stackrel{\bigcirc}{3} &\stackrel{\bigcirc}{4} &\stackrel{\bigcirc}{5}  }$$

\end{example}
\begin{example}
 Let $t=1$, $\lam = (\InnaD{\young(\bullet\bullet)}, \yng(1,1))$. The diagram 
$d'_{\lam}$ is then 

 $$ 
 \xymatrix{ &\stackrel{\times}{-5} &\stackrel{\times}{-4} &\stackrel{\times}{-3} 
&\stackrel{\times}{-2}  &\stackrel{>}{-1} &\stackrel{\bigcirc}{0}  
&\stackrel{<}{1} &\stackrel{>}{2} &\stackrel{\bigcirc}{3} 
&\stackrel{\bigcirc}{4} &\stackrel{\bigcirc}{5} }$$
\end{example}
\begin{remark}
 The definition of $d'_{\lam}$ corresponds to the diagrams (shifted by $t$) 
defined in \cite[Section 6]{CW}, but with the symbols permuted. Here is a 
dictionary: 
 
 \begin{center}
   \begin{tabular}{l||c c c c}
    Notation of \cite{CW} &$\vee$ &$\wedge$ &$\times$ &$\bigcirc$  \\ \hline

    Our notation &$\bigcirc$ &$\times$ &$>$ &$<$                                 
                                                                                 
                     \end{tabular}
 \end{center}

\end{remark}
\subsection{Relation between the two weight diagrams}
\begin{notation}\label{notn:transp_bipart}
 Let $\lambda, \mu$ be a bipartition. We denote by 
$\lam^{\vee}:=(\lam^{\bullet}, \lam^{\circ \vee})$, where $\lam^{\circ \vee}$ is 
the transposed Young diagram $\lam^{\circ}$.
\end{notation}
\begin{lemma}\label{lem:CW_vs_V}
 The diagrams $d_{\lam}$ and $d'_{\lam^{\vee}}$ coincide.
\end{lemma}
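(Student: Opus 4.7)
The plan is to reduce the statement to a classical combinatorial identity about partitions and their conjugates. First I would unpack the definitions. For the diagram $d_{\lam}$, the relevant sets are $C = \{\lam^{\bullet}_i + t - i : i \geq 1\}$ and $D = \{\lam^{\circ}_i - i : i \geq 1\}$. For the diagram $d'_{\lam^{\vee}}$, since $(\lam^{\vee})^{\bullet} = \lam^{\bullet}$, the set $C'$ coincides with $C$, while $D' = \bZ \setminus \{j - \lam^{\circ \vee}_j - 1 : j \geq 1\}$. The symbol placed at position $s$ in each diagram depends only on the membership of $s$ in the four sets $C \setminus D$, $D \setminus C$, $C \cap D$, and the complement of $C \cup D$; since $C = C'$, the two diagrams coincide if and only if $D = D'$.

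Thus the entire content of the lemma is the following purely combinatorial claim: for any partition $\mu$, the sets
\begin{equation*}
A(\mu) := \{\mu_i - i : i \geq 1\}, \quad B(\mu) := \{j - \mu^{\vee}_j - 1 : j \geq 1\}
\end{equation*}
form a disjoint partition of $\bZ$. This is the standard ``Maya diagram'' or boundary-path description of a partition, and I would prove it directly by induction on $\abs{\mu}$.

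The base case $\mu = \InnaC{\varnothing}$ is immediate: $A(\InnaC{\varnothing}) = \{-i : i \geq 1\}$ and $B(\InnaC{\varnothing}) = \{j - 1 : j \geq 1\}$, which partition $\bZ$. For the inductive step, obtain $\mu'$ from $\mu$ by adding a removable box at position $(i, j)$; this cell has content $c = j - i$, and by its removability one has $\mu_i = j - 1$ and $\mu^{\vee}_j = i - 1$. A direct computation shows that the only changes in passing from $(A(\mu), B(\mu))$ to $(A(\mu'), B(\mu'))$ are: the index $i$'s contribution to $A$ changes from $c-1$ to $c$, and the index $j$'s contribution to $B$ changes from $c$ to $c-1$. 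Thus $c-1$ moves from $A$ to $B$ and $c$ moves from $B$ to $A$, which preserves the property of being a disjoint partition of $\bZ$.

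The main obstacle is purely bookkeeping: one must check that no other values in $A(\mu)$ or $B(\mu)$ are affected (which is clear since $\mu_{i'}$ and $\mu^{\vee}_{j'}$ are unchanged for $i' \neq i$, $j' \neq j$) and that the two values being swapped lie in the expected sets before the move (which follows from the inductive hypothesis together with the identities $\mu_i - i = c-1$ and $j - \mu^{\vee}_j - 1 = c$). Once this identity is in place, the equality $D = D'$, and hence the lemma, follows immediately.
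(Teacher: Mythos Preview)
Your proof is correct. Both your argument and the paper's reduce the lemma to the same purely combinatorial claim: for any partition $\kappa$ the sets $\{\kappa_i - i\}_{i\geq 1}$ and $\{j - \kappa^{\vee}_j - 1\}_{j\geq 1}$ form a disjoint partition of $\bZ$. The difference lies only in how this identity is established. The paper gives a direct argument: it first shows the two sets are disjoint via the observation that $\kappa_i < j \Leftrightarrow \kappa^{\vee}_j < i$, and then covers $\bZ$ by noting that for $N=\abs{\kappa}$ all integers outside $\{-N,\ldots,N-1\}$ come from indices $i>N$, leaving exactly $2N$ remaining integers to be covered by the $2N$ values with $1\leq i\leq N$. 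Your route is an induction on $\abs{\mu}$, interpreting the identity as the Maya-diagram (boundary-path) picture and tracking the single swap $c-1 \leftrightarrow c$ that occurs when a box of content $c=j-i$ is added. Your approach is arguably more conceptual and makes the connection to Maya diagrams explicit; the paper's approach avoids induction but relies on a slightly ad hoc size count. One minor wording slip: the box you add at $(i,j)$ is an \emph{addable} box of $\mu$ (removable from $\mu'$), not a removable box of $\mu$; your equations $\mu_i=j-1$, $\mu^{\vee}_j=i-1$ are exactly the addability conditions, so the computation is unaffected.
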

\begin{proof}
 We need to show that the set $D$ for $\lam$ and the set $D'$ for $\lam^{\vee}$ 
coincide. That is, that we want to show that for any Young diagram $\kappa:= 
\lambda^\circ$, we have $$\{ \kappa_i - i\}_{i \geq 1}\sqcup 
\{i-\kappa^\vee_i-1\}_{i \geq 1} = \bZ$$
 First, we check that these two sets do not intersect. Indeed, assume that 
$\kappa_i - i = j-\kappa^\vee_j-1$ for some $i, j$. Then we would have $\kappa_i 
+ \kappa^{\vee}_j+1 = i+j$. Yet this is impossible: $\kappa_i <j$ iff 
$\kappa^{\vee}_j <i$, and thus $\kappa_i + \kappa^{\vee}_j+1$ is either at most 
$i+j-1$, or at least $i+j+1$.
 
 Next, we show that the union of the two sets above is indeed $\bZ$. Let $N = 
\abs{\kappa}$. Then for any $j > N$, we have:
 $ j = (j+1) - \kappa_{j+1} - 1$, $ -j = \kappa_j - j$ (the first statement is 
true even for $j=N$). Thus $$\bZ \setminus \{-N, -N+1, \ldots, N-2, N-1\}  = \{j 
\rvert j\geq N\} \sqcup \{-j \rvert j>N\} \subset \{ \kappa_i - i\}_{i \geq 
1}\sqcup \{i-\kappa^\vee_i-1\}_{i \geq 1}$$
 
The set difference between the RHS and the LHS in the above inclusion is $\{ 
\kappa_i - i\}_{1 \leq i \leq N}\sqcup \{i-\kappa^\vee_i-1\}_{1 \leq i \leq N}$. 
This is a set of $2N$ elements, and thus it covers precisely the set $\{-N, 
-N+1, \ldots, N-2, N-1\}$.
 
\end{proof}

\subsection{Diagrams and the Fock space}
Finally, we prove a combinatorial auxiliary result relating weights in the Fock 
space representation of $\sll_{\bZ}$ and weight diagrams. This will be used to 
prove Theorem \ref{thrm:tens_prod_categorif}.

\begin{definition}\label{def:core_diag}
 The core $core(d'_{\lam})$  of the diagram $d'_{\lam}$ is the diagram obtained 
from $d'_{\lam}$ by replacing all the symbols $\times$ with $\circ$. The core 
$core(d_{\lam})$ of $d_{\lam}$ is defined in the same way.
\end{definition}

\begin{lemma}\label{lem:equal_cores_weights}
 Let $t \in \bC$. Let $\lambda, \mu$ be two bipartitions, such that 
$core(d'_{\lam}) = core(d'_{\mu})$. Then $$n_a(\lambda^{\bullet}) - 
n_{-\InnaC{(a+t)}}(\lambda^{\circ}) = n_a(\mu^{\bullet}) - 
n_{-\InnaC{(a+t)}}(\mu^{\circ})$$ for each $a \in \bZ$ where $n_a(\nu)$ equals 
$1$ if $\nu + \square_a \neq \emptyset$, $-1$ if $\nu - \square_a \neq 
\emptyset$, and zero otherwise, as defined in Section \ref{ssec:Fock_space}. 
\end{lemma}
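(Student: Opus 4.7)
The plan is to rewrite both sides in terms of indicator functions on $C'$ and $D'$ and then observe invariance under the moves that preserve $\mathrm{core}(d'_\lambda)$. First I would unpack $n_a$ combinatorially: for any partition $\nu$ with Maya diagram $M(\nu) := \{\nu_i - i : i \geq 1\} \subset \bZ$, the addable/removable analysis yields
$$ n_a(\nu) \;=\; \mathbbm{1}_{-a-1 \in M(\nu)} \;-\; \mathbbm{1}_{-a \in M(\nu)}, $$
from which one also reads off the transpose identity $n_b(\nu^{\vee}) = n_{-b}(\nu)$ and the relation $M(\lambda^{\circ \vee}) = D'$ that underlies Lemma \ref{lem:CW_vs_V}. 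Together with $M(\lambda^{\bullet}) = C' - t$, these identifications turn $n_a(\lambda^{\bullet}) - n_{-(a+t)}(\lambda^{\circ})$ into an explicit integer combination of four indicator values, referring to $C'$ at positions $-a-1+t, -a+t$ and to $D'$ at two further positions determined by $a$ and $t$.

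Next I would use that $\mathrm{core}(d'_\lambda)$ encodes, position by position, the signed difference $\mathbbm{1}_{p \in C'} - \mathbbm{1}_{p \in D'}$: this equals $+1$ at a $>$, $-1$ at a $<$, and $0$ at either $\times$ or $\bigcirc$. Hence two bipartitions $\lambda, \mu$ share a common core precisely when their primed diagrams differ only by simultaneous ``$\times \leftrightarrow \bigcirc$'' swaps, that is by a finite sequence of elementary coupled moves, each of which removes a common integer $s$ from both $C'$ and $D'$ and adds a common integer $s'$ to both. It therefore suffices to check that every such elementary swap preserves the expression above for every $a \in \bZ$; this is a short bookkeeping computation in four indicator values, and iterating it yields the full claim.

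The main obstacle will be precisely this last cancellation check. On the $C'$-side the shift by $t$ enters via the positions $-a-1+t, -a+t$, whereas on the $D'$-side the shift $-(a+t)$ enters through $n$ evaluated at a negative index, so the two pairs of positions are related by a reflection through a point depending on $t$. Making the pairwise cancellation transparent will require the right parametrisation of the elementary swaps and careful bookkeeping, exploiting the fact that an elementary swap corresponds to a single coupled move of particles in the Maya diagrams of $\lambda^{\bullet}$ and of $\lambda^{\circ \vee}$ --- precisely the coupling designed to make the $h_a$-weights in $\mathfrak{F}$ and in the twisted shifted dual $\mathfrak{F}^{\vee}_t$ cancel.
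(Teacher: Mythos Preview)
Your approach is essentially the paper's own: express $n_a$ via the Maya indicator function, pass to $\lambda^{\circ\vee}$ using the transpose identity, and then observe that the core of $d'_\lambda$ records exactly the pointwise difference $\mathbbm{1}_{C'}-\mathbbm{1}_{D'}$. The paper carries this out after first translating to $d_{\lambda^{\vee}}$ via Lemma~\ref{lem:CW_vs_V}, but that is only a cosmetic difference from working directly with $C',D'$ as you propose.

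There is, however, a sign slip in your Maya formula that derails the argument. Under the convention actually used in the Fock space (so that $f_a.v_\nu=v_{\nu+\square_a}$, i.e.\ content of a cell is \emph{column minus row}), one has
\[
n_a(\nu)=\mathbbm{1}_{a-1\in M(\nu)}-\mathbbm{1}_{a\in M(\nu)},\qquad M(\nu)=\{\nu_i-i:i\ge 1\},
\]
not the version with $-a-1,-a$. With the correct sign, after applying the transpose identity $n_{-(a+t)}(\lambda^{\circ})=n_{a+t}(\lambda^{\circ\vee})$ and the identifications $M(\lambda^{\bullet})=C'-t$, $M(\lambda^{\circ\vee})=D'$, both the $C'$-indicators and the $D'$-indicators are evaluated at the \emph{same} pair of positions $a+t-1$ and $a+t$. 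Hence
\[
n_a(\lambda^{\bullet})-n_{-(a+t)}(\lambda^{\circ})
=\bigl(\mathbbm{1}_{C'}-\mathbbm{1}_{D'}\bigr)(a+t-1)-\bigl(\mathbbm{1}_{C'}-\mathbbm{1}_{D'}\bigr)(a+t),
\]
which visibly depends only on $\operatorname{core}(d'_\lambda)$. This is exactly the paper's computation (with $C^\lambda,D^\lambda$ being $C'-t,D'-t$), and no ``elementary swap'' bookkeeping is needed at all.

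With your sign, the $C'$-positions $-a-1+t,\,-a+t$ and the $D'$-positions $-a-t-1,\,-a-t$ differ by $2t$, so the expression is \emph{not} a function of $\mathbbm{1}_{C'}-\mathbbm{1}_{D'}$, and your proposed invariance check under a single $\times\leftrightarrow\bigcirc$ swap at position $s$ would fail for $t\neq 0$: the change on the $C'$-side is supported at $a=t-s-1,\,t-s$, while the change on the $D'$-side is supported at $a=-t-s-1,\,-t-s$, and these do not cancel. Fix the sign and the ``main obstacle'' you anticipate disappears; the proof becomes a one-line identity.
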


\begin{proof}
First of all, recall that by Lemma \ref{lem:CW_vs_V}, $d'_{\lam} = 
d_{\lam^{\vee}}$, so $core(d_{\lam^{\vee}}) = core(d_{\mu^{\vee}})$, and 
$$n_a(\lambda^{\bullet}) - n_{-\InnaC{(a+t)}}(\lambda^{\circ}) = 
n_a(\lambda^{\bullet}) - n_{a+t}(\lambda^{\circ \vee}) $$
$$n_a(\mu^{\bullet}) - n_{-\InnaC{(a+t)}}(\mu^{\circ})= n_a(\mu^{\bullet}) - 
n_{a+t}(\mu^{\circ \vee})$$

Thus we wish to show that $core(d_{\lam^{\vee}}) = core(d_{\mu^{\vee}})$ implies 
$$n_a(\lambda^{\bullet}) - n_{a+t}(\lambda^{\circ \vee}) \stackrel{?}{=} 
n_a(\mu^{\bullet}) - n_{a+t}(\mu^{\circ \vee})$$
 Consider the sets $$C^{\lambda} := \{\lambda^{\bullet}_i - i\}, \;\; 
D^{\lambda}:= \{ \lambda^{\circ \vee}_i - i-t\}, \;\;\;\;\;\;\;\; C^{\mu} := 
\{\mu^{\bullet}_i - i\}, \; \; D^{\mu}:= \{ \mu^{\circ \vee}_i - i-t\}$$ (notice 
the shift by $t$ with respect to the previous definitions).
 
 Let $\phi^C_{\lambda}, \phi^D_{\lambda}: \bZ \to \{0, 1\}$ be the 
characteristic functions of the sets $C^{\lambda}$ and $D^{\lambda}$ 
respectively, and similarly for $\phi^C_{\mu}, \phi^D_{\mu}$.
 
 The condition $core(d_{\lambda^{\vee}}) = core(d_{\mu^{\vee}})$ implies that 
$$\phi^C_{\lambda}-  \phi^D_{\lambda} = \phi^C_{\mu} - \phi^D_{\mu}$$
 
 Now, $$n_a(\lambda^{\bullet}) = \begin{cases}
                                  1 &\text{  if  } a-1 \in C^{\lambda},  \;\; 
a\notin C^{\lambda} \\
                                  -1 &\text{  if  } a \in C^{\lambda},  \;\; a-1 
\notin C^{\lambda}\\
                                  0 &\text{  else  } 
                                 \end{cases}$$
so $$n_a(\lambda^{\bullet}) = \phi^C_{\lambda}(a-1) - \phi^C_{\lambda}(a)$$
Similarly, $$n_{a+t}(\lambda^{\circ \vee}) = \phi^D_{\lambda}(a-1) - 
\phi^D_{\lambda}(a)$$

Writing out similar identities for $\mu$, we obtain:
\begin{align*}
 &n_a(\lambda^{\bullet}) - n_{a+t}(\lambda^{\circ \vee}) = \phi^C_{\lambda}(a-1) 
- \phi^C_{\lambda}(a) - \left( \phi^D_{\lambda}(a-1) - \phi^D_{\lambda}(a) 
\right) = \\
 &= \phi^C_{\mu}(a-1) -  \phi^C_{\mu}(a) - \left( \phi^D_{\mu}(a-1) - 
\phi^D_{\mu}(a)\right) = n_a(\mu^{\bullet}) - n_{a+t}(\mu^{\circ \vee})
\end{align*}

as required.
\end{proof}

\section{Multiplicities in the category 
\texorpdfstring{$\cV$}{V}}\label{sec:mult}
In this section we multiplicities in standard filtrations and Jordan-Holder 
filtrations in category $\cV$. 
Recall that the objects $ \mathbf{T}(\lam)$ in $\cV$ have a standard filtration 
by objects $\mathbf{V}(\mu)$, which is induced from the standardly-filtered 
objects $V_t^{\otimes \abs{\lam^{\bullet}}} \otimes V_t^{* \otimes 
\abs{\lam^{\circ}}}$ (see \cite{EHS} for details). 
\begin{theorem}\label{thrm:equiv_multip}
 Let $t \in \bC$, and let $\lambda, \mu$ be two bipartitions. Then $$\left( 
\mathbf{T}(\lam):\mathbf{V}(\mu) \right)_{\cV} = \left[ \mathbf{V}(\lam) 
:\mathbf{L}(\mu) \right]_{\cV}$$
 where the LHS denotes the multiplicity of the standard object $\mathbf{V}(\mu)$ 
in the standard filtration of the standard object $\mathbf{T}(\lam)$, and the 
RHS denotes the multiplicity of the simple object $\mathbf{L}(\lam)$ among the 
Jordan-Holder components of $\mathbf{V}(\mu)$.
 
 Moreover, these multiplicities coincide with the multiplicity 
$D^{\lam}_{\mu}(t)$ (see Section \ref{sssec:formal_param} for definition, and 
\cite[Section 6]{CW}).
\end{theorem}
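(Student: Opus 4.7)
The plan is to prove the three-way equality by navigating between three ambient categories: the abelian envelope $\cV$ (where the standard filtrations and Jordan--H\"older series live), the generic Deligne category $\mathcal{D}_T$ (which is semisimple), and the auxiliary category $Rep^{alg}(GL_\infty)$ whose universal behaviour controls the ``flatness'' of standard objects. I interpret the RHS according to the sentence following the formula, i.e.\ as $[\mathbf{V}(\mu):\mathbf{L}(\lam)]_{\cV}$, which is the classical BGG formulation.

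\textbf{Step 1 (BGG reciprocity).} I would first establish
$\bigl(\mathbf{T}(\lam):\mathbf{V}(\mu)\bigr)_{\cV} = [\mathbf{V}(\mu):\mathbf{L}(\lam)]_{\cV}$
by working inside a sufficiently large truncation $\mathcal{V}_t^k$, $k \geq \abs{\lam}+\abs{\mu}$. Via the local equivalence $\mathcal{F}_{m|n}: \mathcal{V}_t^k \stackrel{\sim}{\to} Rep^k(\gl(m|n))$ for $m, n \gg k$, the three objects $\mathbf{T}(\lam), \mathbf{V}(\mu), \mathbf{L}(\lam)$ transport to the indecomposable tilting, Kac, and simple modules in the highest-weight category $Rep(\gl(m|n))$ respectively. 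There, standard BGG reciprocity gives $(T(\lam):\mathit{K}(\mu)) = [\mathit{K}^{\vee}(\mu):\mathit{L}(\lam)]$, where $\mathit{K}^{\vee}(\mu)$ is the dual Kac module; the usual duality on $Rep(\gl(m|n))$ fixes simples and swaps $\mathit{K}$ and $\mathit{K}^{\vee}$, so $[\mathit{K}^{\vee}(\mu):\mathit{L}(\lam)] = [\mathit{K}(\mu):\mathit{L}(\lam)]$. Since both sides stabilise as $k$ grows, the equality transfers back to $\cV$.

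\textbf{Step 2 (Lift identification).} To identify both quantities with $D^{\lam}_{\mu}(t)$, I would use the exact faithful SM functor $\Psi: Rep^{alg}(GL_\infty) \to \cV$ sending each simple object of $Rep^{alg}(GL_\infty)$ to the corresponding standard $\mathbf{V}(\mu)$, together with its analogue $\Psi^T: Rep^{alg}(GL_\infty) \to \mathcal{D}_T$ sending the same simples to $\mathbf{T}(\mu) = \mathbf{V}(\mu)$ in the semisimple category $\mathcal{D}_T$. Compatibility of $\Psi$ and $\Psi^T$ with the embedding $\mathcal{D}_t \hookrightarrow \cV$ and with the idempotent lifting from the walled Brauer algebra identifies the class $[\mathbf{V}(\mu)]$ with $[\mathbf{T}(\mu)]_{\mathcal{D}_T}$ under $Lift_t$. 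Applying $Lift_t$ to the Grothendieck-group identity
\[
[\mathbf{T}(\lam)] = \sum_{\mu} \bigl( \mathbf{T}(\lam):\mathbf{V}(\mu) \bigr)_{\cV} [\mathbf{V}(\mu)],
\]
which holds because $\mathbf{T}(\lam)$ admits a standard filtration in $\cV$, and comparing with the defining identity $Lift_t[\mathbf{T}(\lam)] = \sum_{\mu} D^{\lam}_{\mu}(t)\,[\mathbf{T}(\mu)]_{\mathcal{D}_T}$ reads off $\bigl(\mathbf{T}(\lam):\mathbf{V}(\mu)\bigr)_{\cV} = D^{\lam}_{\mu}(t)$.

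\textbf{Main obstacle.} The crux, I expect, is rigorously establishing the ``flatness'' of the standards $\mathbf{V}(\mu)$ under $Lift_t$ --- that is, verifying that $\Psi$ and $\Psi^T$ fit together compatibly with the walled Brauer idempotent lift of \cite{CO, CW}, so that specialising the $Rep^{alg}(GL_\infty)$-simple at the generic parameter recovers the class $[\mathbf{T}(\mu)]_{\mathcal{D}_T}$, while specialising at $t$ recovers $[\mathbf{V}(\mu)]_{\cV}$. Once this compatibility is in place, Steps 1 and 2 combine immediately to give the full chain of equalities, including the identification with the CW multiplicity $D^{\lam}_{\mu}(t)$.
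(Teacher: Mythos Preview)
Your Step 1 contains a genuine error. The identity you invoke as ``standard BGG reciprocity'', namely $(T(\lambda):\Delta(\mu)) = [\nabla(\mu):L(\lambda)]$, is the reciprocity for \emph{projective covers} $P(\lambda)$, not for tilting modules; there is no such formula for tiltings in a general highest-weight category. The paper's own example immediately after the theorem refutes your identity: for $t=0$, $\lambda=(\blacksquare,\square)$, $\mu=(\varnothing,\varnothing)$ one has $(\mathbf{T}(\lambda):\mathbf{V}(\mu)) = 1$ while $[\mathbf{V}(\mu):\mathbf{L}(\lambda)] = [\triv:\mathbf{L}(\blacksquare,\square)] = 0$. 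So although the explanatory sentence in the statement is indeed confusingly worded, your reinterpretation of the RHS as $[\mathbf{V}(\mu):\mathbf{L}(\lambda)]$ produces a false equality; the displayed formula $[\mathbf{V}(\lambda):\mathbf{L}(\mu)]$ is what is actually established, and this is \emph{not} a formal BGG consequence. (A minor side issue: under the local equivalence $\mathcal{F}_{m|n}$ the standards $\mathbf{V}(\lambda)$ go to the truncated modules $\mathit{V}(\lambda)$, which are proper quotients of the Kac modules, not to the Kac modules themselves.)

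The paper's argument is quite different from your outline. For the Jordan--H\"older side it identifies $[\mathbf{V}(\lambda):\mathbf{L}(\mu)]$ with $D^{\lambda}_{\mu}(t)$ \emph{directly}, by matching two cap-diagram descriptions: the Kazhdan--Lusztig combinatorics for $\gl(m|n)$ coming from \cite{EHS, MS}, and the Comes--Wilson description of $D^{\lambda}_{\mu}(t)$ from \cite{CW}. For the standard-filtration side $(\mathbf{T}(\lambda):\mathbf{V}(\mu)) = D^{\lambda}_{\mu}(t)$, the paper does not try to lift standard objects. Instead it uses the manifestly flat objects $Y(\lambda) := S^{\lambda^{\bullet}}V_t \otimes S^{\lambda^{\circ}}V_t^*$: writing $Y(\lambda) = \bigoplus_{\mu} \mathbf{T}(\mu)^{\oplus b^{\lambda}_{\mu}(t)}$, one has on the one hand the $t$-independent Littlewood--Richardson formula $(Y(\lambda):\mathbf{V}(\mu)) = \sum_{\kappa} LR^{\lambda^{\bullet}}_{\mu^{\bullet},\kappa} LR^{\lambda^{\circ}}_{\mu^{\circ},\kappa} = B^{\lambda}_{\mu}$, and on the other $(Y(\lambda):\mathbf{V}(\mu)) = \sum_{\mu'} b^{\lambda}_{\mu'}(t) K^{\mu'}_{\mu}(t)$. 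Since $Lift_t(Y(\lambda)) = Y(\lambda)$ gives $B = b(t)D(t)$, this yields the matrix equation $b(t)K(t) = b(t)D(t)$, and unitriangularity of $b(t)$ forces $K(t) = D(t)$.

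Your Step 2 instinct --- that the heart of the matter is flatness under $Lift_t$ --- is correct, but the paper realises it through the Schur-functor objects $Y(\lambda)$ and a matrix-inversion trick, rather than by directly establishing flatness of the standards via $Rep^{alg}(GL_{\infty})$; this sidesteps precisely the obstacle you flagged.
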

\begin{proof}
 Denote: $$K^{\lam}_{\mu}(t): = \left( \mathbf{T}(\lam):\mathbf{V}(\mu) 
\right)_{\cV}  , \; \; \; \tilde{K}^{\lam}_{\mu}(t) = \left[ \mathbf{V}(\lam) 
:\mathbf{L}(\mu) \right]_{\cV}$$
 The multiplicities $\tilde{K}^{\lam}_{\mu}(t)$ can be shown to be either $1$ 
or 
$0$, and can be computed using cap diagrams (cf. original definitions in 
\cite{BS}, \cite{MS}). We will use the cap diagrams as defined in \cite[Section 
4.5, Section 8.3]{EHS}. The {\it cap diagram} corresponding to $d'_{\lam}$ is 
the diagram $d'_{\lam}$ together with additional arcs (``caps'') which have a 
\InnaC{left} end at $\times$ and a right end at $\bigcirc$. These caps are 
required to satisfy the following conditions:
 \begin{enumerate}
 \item The caps should not intersect each other.
  \item Any $\bigcirc$ which is inside a cap should be the right end of some 
other cap.
  \item Any $\times$ should be the left end of exactly one cap.
 \end{enumerate}
 
 In \cite[Lemma 8.3.2]{EHS} it is shown that $\tilde{K}^{\lam}_{\mu}(t)=1$ iff 
one can obtain $d'_{\lam}$ from $d'_{\mu}$ by 
moving finitely many crosses in the cap diagram of $d'_{\mu}$ 
from the left end of a cap to the right end of this cap.

Let us give a 
sketch of the proof: using Lemma \ref{lem:V_lam_highest_weight}, one reduces 
the problem to the computation of Kazhdan-Lusztig coefficients in the 
$\gl(m|n)$-module $\mathit{K}(\lam)$. A simple module $\mathit{L}(\mu)$ sits in 
$\mathit{K}(\lam)$ precisely if one can obtain the diagram $d_{\lam^{\vee}}$ 
from $d_{\mu^{\vee}}$ by moving finitely many crosses from the left end of a 
cap 
to the right end of this cap (cf. \cite{MS}). By Lemma \ref{lem:CW_vs_V}, we 
have $d'_{\lam} = d_{\lam^{\vee}}$, which proves the required statement.
 
 \begin{remark}
 \InnaC{The transposition of the second Young diagram in the 
bipartitions appearing above comes from the different choice of simple 
roots in \cite{EHS}.}
\end{remark}
 
 In \cite[Section 6]{CW}, it is shown that $lift_t(\mathbf{T}(\lam)) = 
\bigoplus_{\mu} \mathbf{T}(\mu)^{\oplus D^{\lam}_{\mu}(t)}$ (see Section 
\ref{sssec:formal_param} for definition of $lift_t$), where $D^{\lam}_{\mu}(t)$ 
is $1$ or $0$, and it can be computed explicitly using cap diagrams: namely, 
$D^{\lam}_{\mu}(t) =1$ iff $d'_{\lam}$ can be obtained from $d'_{\mu}$ by moving 
finitely many $\times$ in the cap diagram of $d'_{\mu}$ from the left end of a 
cap to the right end of this cap\footnote{Note that the caps we describe are 
``complimentary'' to the caps used in \cite{CW}: the latter would have $\times$ 
on the right end and $\bigcirc$ on the left end, which means that after 
switching the symbols they would become our caps.}. 
 
Hence $$\tilde{K}^{ \lam}_{\mu}(t)= D^{\lam}_{\mu}(t).$$

 Now, consider the object $Y(\lam):= S^{\lam^{\bullet}} V_t \otimes 
S^{\lam^{\circ}} V_t^*$ in $\mathcal{D}_t$. We denote by $b^{\lam}_{\mu}(t)$ the 
multiplicities in the decomposition $$Y(\lam) = \bigoplus_{\mu} 
\mathbf{T}(\mu)^{\oplus b^{\lam}_{\mu}(t)}$$
 
 We consider the objects $\mathbf{T}(\lam)$, $Y(\lam):= S^{\lam^{\bullet}} V_T 
\otimes S^{\lam^{\circ}} V_T^*$ in $\mathcal{D}_T$, where $T$ is a formal 
parameter. We will denote by $B^{\lam}_{\mu}$ the corresponding multiplicity in 
this case.
 
 One can immediately see from the definition of $lift_t$ that $lift_t(Y(\lam)) = 
Y(\lam)$ in $\mathcal{D}_T$, and therefore 
 $$ \bigoplus_{\mu} lift_t(\mathbf{T}(\mu))^{\oplus b^{\lam}_{\mu}(t)} = 
\bigoplus_{ \mu'} \mathbf{T}(\mu')^{\oplus \sum_\mu b^{\lam}_{\mu}(t) 
D^{\mu}_{\mu'}(t)} = lift_t(Y(\lam)) =Y(\lam) =\bigoplus_{\mu'} 
\mathbf{T}(\mu')^{\oplus B^{\lam}_{\mu'}}  $$
 
 Considering $b, B, D$ as (infinite) matrices whose entries are numbered by 
pairs of bipartitions, we obtain: $B = b(t) D(t)$.
 
 Next, it was shown in \cite[Corollary 7.1.2]{CW} that $$B^{\lam}_{\mu} = 
\sum_{\kappa \in \mathcal{P}} LR^{\lam^{\bullet}}_{\mu^{\bullet}, \kappa} 
LR^{\lam^{\circ}}_{ \mu^{\circ},\kappa}$$
 where $LR$ denote the Littlewood-Richardson coefficients, and the sum is over 
all the partitions $\kappa$.
 
 On the other hand, recall that $Y(\lam) \subset V_t^{\otimes 
\abs{\lam^{\bullet}}} \otimes V_t^{* \otimes \abs{\lam^{\circ}}}$ is a 
standardly-filtered object in $\cV$, and the multiplicities of standards in this 
filtration are known, due to \cite[Lemma 8.1.4]{EHS}, \cite{PS}:
 $$\left( Y(\lam): \mathbf{V}(\mu) \right) = \sum_{\kappa \in \mathcal{P}} 
LR^{\lam^{\bullet}}_{\mu^{\bullet}, \kappa} LR^{\lam^{\circ}}_{ 
\mu^{\circ},\kappa}$$
 
 Thus $( Y(\lam): \mathbf{V}(\mu))$ is the $(\lam, \mu)$ entry in $b(t) D(t)$.
 
 Now, $$\left( Y(\lam): \mathbf{V}(\mu) \right) = \sum_{\mu'} 
[Y(\lam):\mathbf{T}(\mu')] \left( \mathbf{T}(\mu'):\mathbf{V}(\mu) \right) = 
\sum_{\mu'} b^{\lam}_{\mu'}(t) K^{\mu'}_{\mu}(t)$$
 
 Hence $$b(t)D(t) = b(t) K(t)$$ We claim that the matrix $b(t)$ is invertible. 
Indeed, we can order the set of bipartitions by total size ($\abs{\lam} := 
\abs{\lam^{\bullet} } + \abs{\lam^{\circ}}$); bipartitions of the same size can 
be ordered arbitrarily.
 Then it is easy to see that $B, D(t)$ are matrices which are lower-triangular 
(with finitely many non-zero entries in each column), and all entries on the 
diagonal are $1$ (cf. \cite[Section 6]{CW}). Thus $B, D(t)$ are invertible, and 
so is $b(t)$. We conclude that  
 $$D(t) = K(t)$$ which, together with the fact that $\tilde{K}^{ \lam}_{\mu}(t)= 
D^{\lam}_{\mu}(t)$, completes the proof.

\end{proof}

\begin{example}
 Let $t=0$, $\lam = (\InnaD{\young(\bullet)}, \square)$, $\mu = 
(\InnaC{\varnothing}, 
\InnaC{\varnothing})$. Then $\mathbf{T}(\InnaD{\young(\bullet)}, \square) = V_t 
\otimes 
V_t^*$, $\mathbf{T}(\InnaC{\varnothing}, \InnaC{\varnothing}) = 
\mathbf{V}(\InnaC{\varnothing}, \InnaC{\varnothing}) = 
\mathbf{L}(\InnaC{\varnothing}, \InnaC{\varnothing}) = \triv$. Then $$\left( 
\mathbf{T}(\InnaD{\young(\bullet)}, \square):\mathbf{V}(\InnaC{\varnothing}, 
\InnaC{\varnothing}) \right)_{\cV} = \left( \mathbf{V}(\InnaD{\young(\bullet)}, 
\square):\triv \right)_{\cV} = 1$$ while $$\left( 
\mathbf{T}(\InnaC{\varnothing}, 
\InnaC{\varnothing}):\mathbf{V}(\InnaD{\young(\bullet)}, 
\square) \right)_{\cV} = \left( 
\mathbf{V}(\InnaC{\varnothing},\InnaC{\varnothing}):\mathbf{L}(\InnaD{
\young(\bullet)}, 
\square) \right)_{\cV} = 0$$  
\end{example}

As a corollary, we obtain the following formula for the dimension of the 
Hom-space for tilting modules:
\begin{corollary}\label{cor:Hom_two_tilting}
 Let $\lam, \mu$ be two bipartitions. Then $$\dim \Hom_{\cV} (\mathbf{T}(\lam), 
\mathbf{T}(\mu)) = \sum_{\nu \in \mathcal{P} \times \mathcal{P}} \left( 
\mathbf{T}(\lam):\mathbf{V}(\nu) \right)_{\cV} \left( 
\mathbf{T}(\mu):\mathbf{V}(\nu) \right)_{\cV} $$
 
\end{corollary}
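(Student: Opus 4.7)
The strategy is to reduce to the local highest-weight structure of $\cV$ and then invoke the classical Hom formula for objects admitting standard and costandard filtrations in a highest-weight category.

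First, I would pick $k$ large enough that both $\mathbf{T}(\lam)$ and $\mathbf{T}(\mu)$, together with every standard object $\mathbf{V}(\nu)$ that appears in their standard filtrations, lie in the subcategory $\mathcal{V}_t^k$. By \cite[Section 4.7]{EHS}, this subcategory is a genuine highest-weight category: in particular, it contains costandard objects $\nabla(\nu)$ characterized by $\dim \Hom(\mathbf{V}(\nu), \nabla(\kappa)) = \delta_{\nu, \kappa}$ and $\Ext^i(\mathbf{V}(\nu), \nabla(\kappa)) = 0$ for $i > 0$, and each tilting object $\mathbf{T}(\nu)$ admits both a standard and a costandard filtration. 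Since $\mathcal{V}_t^k \hookrightarrow \cV$ is a full embedding, computing $\dim \Hom$ inside $\mathcal{V}_t^k$ gives the correct answer.

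Second, I would invoke the classical Hom formula in a highest-weight category: for any $M$ with standard filtration and any $N$ with costandard filtration,
$$\dim \Hom(M, N) = \sum_{\nu} (M : \mathbf{V}(\nu)) \cdot (N : \nabla(\nu)).$$
The standard argument is induction on the length of the standard filtration of $M$, using the short exact sequences $0 \to M_{i-1} \to M_i \to \mathbf{V}(\nu_i) \to 0$ together with the vanishing of $\Ext^1(\mathbf{V}(\nu_i), N)$ that follows from the costandard filtration of $N$. Applied to $M = \mathbf{T}(\lam)$ and $N = \mathbf{T}(\mu)$, this yields
$$\dim \Hom_{\cV}(\mathbf{T}(\lam), \mathbf{T}(\mu)) = \sum_{\nu} \bigl(\mathbf{T}(\lam) : \mathbf{V}(\nu)\bigr) \cdot \bigl(\mathbf{T}(\mu) : \nabla(\nu)\bigr).$$

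Third, I would observe that for any tilting object the multiplicities of standards and of costandards coincide: $\bigl(\mathbf{T}(\mu) : \mathbf{V}(\nu)\bigr) = \bigl(\mathbf{T}(\mu) : \nabla(\nu)\bigr)$. This holds because in the Grothendieck group of $\mathcal{V}_t^k$ one has $[\mathbf{V}(\nu)] = [\nabla(\nu)]$ (same composition factors), while $\{[\mathbf{V}(\nu)]\}$ is a basis of this Grothendieck group, so the expansion coefficients of $[\mathbf{T}(\mu)]$ in this basis are uniquely determined. Substituting this equality into the formula above finishes the proof.

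The main technical point to verify carefully is the applicability of classical highest-weight arguments at each finite truncation $\mathcal{V}_t^k$, since $\cV$ itself lacks projectives and injectives; but this is precisely what the local highest-weight structure of \cite{EHS} provides. Note that Theorem \ref{thrm:equiv_multip} is not strictly required for this corollary, though combining the two gives an equivalent expression for the Hom dimension in terms of Jordan–H\"older multiplicities $[\mathbf{V}(\nu) : \mathbf{L}(\lam)][\mathbf{V}(\nu) : \mathbf{L}(\mu)]$.
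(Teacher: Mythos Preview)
Your approach is correct and matches the alternative argument the paper sketches in the remark immediately preceding its proof: pass to a truncation $\mathcal{V}_t^k$, use the standard Hom formula $\dim\Hom(T_1,T_2)=\sum_\nu (T_1:\Delta(\nu))(T_2:\nabla(\nu))$ for tilting objects in a highest-weight category, and then identify the costandard multiplicities with the standard ones. The paper's \emph{formal} proof, however, takes a different route: it quotes from \cite[Section 6]{CW} the identity $\dim\Hom_{\cV}(\mathbf{T}(\lam),\mathbf{T}(\mu))=\sum_\nu D^{\lam}_{\nu}(t)D^{\mu}_{\nu}(t)$ and then substitutes the equality $D^{\lam}_{\nu}(t)=(\mathbf{T}(\lam):\mathbf{V}(\nu))$ just established in Theorem \ref{thrm:equiv_multip}. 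So the paper's proof \emph{does} use Theorem \ref{thrm:equiv_multip}, contrary to your final remark; your argument is the more self-contained one.

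One point in your third step deserves care. The assertion $[\mathbf{V}(\nu)]=[\nabla(\nu)]$ in the Grothendieck group is \emph{not} a general feature of highest-weight categories; it fails without some form of duality. Here it holds because $\cV$ (and each $\mathcal{V}_t^k$) carries an exact contravariant duality $(\cdot)^{\vee}$ fixing simples and interchanging standards with costandards (see \cite{EHS}; the paper invokes this in the remark and again in the proof of Theorem \ref{thrm:tens_prod_categorif}). You should cite this duality rather than writing ``same composition factors'' as though it were automatic. With that amendment your argument is complete.
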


\begin{remark}
 In any highest weight category, we have
 $$\dim \Hom_{\cV} ({T}_1, {T}_2) = \sum_{\nu} \left( {T}_1:\Delta(\nu) 
\right)_{\cV} \left( {T}_2:\nabla(\nu) \right)_{\cV} $$
 where $T_1, T_2$ are tilting objects, and $\Delta(\nu)$, $\nabla(\nu)$ are the 
standard and costandard objects corresponding to weight $\nu$.
 
 In our case, in the (local) highest-weight subcategories $\mathcal{V}^k_t$ 
there is an exact duality functor taking standard objects to co-standard 
objects; this makes Corollary \ref{cor:Hom_two_tilting} an immediate consequence 
of the above formula.
\end{remark}

\begin{proof}
 It was proved in \cite[Section 6]{CW} that $$\dim \Hom_{\cV} (\mathbf{T}(\lam), 
\mathbf{T}(\mu)) = \sum_{\nu} D^{\lam}_{\nu}(t) D^{\mu}_{\nu}(t)$$
 We have just proved that $D^{\lam}_{\nu}(t) =\left( 
\mathbf{T}(\lam):\mathbf{V}(\nu) \right)_{\cV}$ (this value is either $0$ or 
$1$), and the statement follows.
\end{proof}

\section{Categorical \texorpdfstring{$\sll_{\bZ}$}{sl}-action in tensor 
categories}\label{sec:skeletal_action}
The notions of categorical actions which we will use will be based on the 
definitions in \cite{CR, R, BLW}. We will also define the notion of a 
categorical action of $\sll_{\bZ}$ corresponding to a rigid SM category, in the 
sense of \cite[Section 5.1.1]{R} (witout the requirement that the category be 
abelian, nor that the complexified (split) Grothendieck group gives an integral 
representation of $\sll_{\bZ}$).
\subsection{Categorical type A action}\label{ssec:skeletal_action_def}
We start with the following definition, which is a slightly stronger form of the 
definition of a type A action given by Rouquier  in \cite[Section 
5.1.1]{R}\footnote{In \cite{R}, the first condition is replaced by a requirement 
that natural transformations $E_aF_a \to F_aE_a$ defined through $\rx, \tau$ 
would be invertible.}:
\begin{definition}\label{def:skeletal_action_sl_2}
 Let $\mathcal{A}$ be an additive category. A categorical type A action on 
$\mathcal{A}$ consists of the data $(F, E, \rx, \tau)$, where $(E, F)$ are an 
adjoint pair of (additive) endofunctors of $\mathcal{A}$, $\rx \in \End(F)$, 
$\tau \in \End(F^2)$ and these satisfy:
 \begin{itemize}
  \item $F$ is isomorphic to the left adjoint of $E$,
  \item For any $d \geq 2$, the natural transformations $\rx, \tau$ define an 
action of the degenerate affine Hecke algebra on $F^d$ by \begin{align*}
 \mathrm{dAHA}_d = \bC[\rx_1, \ldots, \rx_d] \otimes \bC[S_d] &\longrightarrow 
\End(F^d)\\
 \rx_i &\mapsto F^{d-i} \rx F^{i-1} \\
 (i, i+1) &\mapsto F^{d-i-1} \tau F^{i-1}
\end{align*}

 \end{itemize}

\end{definition}

The definition of a functor between categorical $\sll_{\bZ}$-modules is the same 
as in \cite[5.2.1]{CR}.

In the spirit of \cite{R, BLW}, if $\mathcal{A}$ is Karoubian, we can consider 
the generalized eigenspaces of the operator $\rx$. This gives us a decomposition 
$F = \bigoplus_{a\in \bC} F_a$, where $F_a$ is the generalized eigenspace of 
$\rx$ corresponding to eigenvalue $a$. Similarly, one can define a decomposition 
$E = \bigoplus_{a\in \bC} E_a$ where $E_a$ is adjoint to $F_a$.

Notice that this does not necessarily give a categorical $\sll_{\bZ}$-action on 
$\mathcal{A}$ as described in \cite{R, BLW}, since the eigenvalues of $\rx$ are 
not necessarily integers.

\subsection{Categorical type A action in 
\texorpdfstring{$\Repc(\gl(V))$}{Rep(gl(V))}}\label{
ssec:skeletal_action_tens_cat}

Let $\mathcal{C}$ be a rigid SM category, and fix $V \in \mathcal{C}$. Consider 
the Lie algebra object $\gl(V)$ and the category $\Repc(\gl(V))$ of 
representations of $\gl(V)$ in $\mathcal{C}$.

We construct a categorical type A action on $\Repc(\gl(V))$ as follows:
\begin{construction}\label{constr:skeletal_action_Rep_gl}
 \mbox{}
 
\begin{itemize}
 \item Define the functors $F:= V \otimes (-)$, $E:= V^* \otimes (-)$.
 \item Define the natural transformation $\tau:= \sigma_{V, V}\otimes \id \in 
\End(F^2)$.
 \item Define the natural transformation $\rx \in \End(F)$ by: $\rx_{(M, \rho)} 
\in \End(V\otimes M)$ corresponds to $\rho$ under the isomorphism $$ 
\End(V\otimes M) \cong \Hom_{\C}(V \otimes V^* \otimes M, M)$$
\end{itemize}

\end{construction}

By definition of rigidity, the functors $E, F$ are biadjoint. The action of the 
degenerate affine Hecke algebra acts on $F^d$ follows from the axioms of the 
symmetry morphism $\sigma_{-, -}$, together with the following lemma:

\begin{lemma}
 The natural transformations $\rx, \tau$ satisfy: $\tau\circ \rx F - F\rx \circ 
\tau = \id$, $\tau \circ F \rx - \rx F  \circ \tau = -\id$.
\end{lemma}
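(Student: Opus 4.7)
The plan is to verify both identities pointwise at each object $(M,\rho)\in\Repc(\gl(V))$, by unpacking $(\rx F)_M$ through the definition of the tensor-product $\gl(V)$-action $\rho'$ on $F(M,\rho) = V\otimes M$. The starting observation is that, since $\gl(V)$ is primitive in its enveloping algebra, the action $\rho'$ decomposes as
\[
\rho' \;=\; \rho_V\otimes\id_M \;+\; (\id_V\otimes\rho)\circ(\sigma_{\gl(V),V}\otimes\id_M),
\]
where $\rho_V := \id_V\otimes ev_{V^*,V}:\gl(V)\otimes V\to V$ is the standard $\gl(V)$-action on $V$. Because the adjunction $\End(V\otimes N)\cong\Hom(V\otimes V^*\otimes N,N)$ defining $\rx_N$ is $\bC$-linear and natural in $N$, substituting $N = V\otimes M$ produces a matching decomposition $(\rx F)_M = A_M + B_M$.

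Next I would identify the two pieces separately. For $A_M$, the summand coming from $\rho_V\otimes\id_M$, a snake-identity computation in the rigid SM structure shows that the endomorphism of $V\otimes V$ corresponding under the $V\dashv V^*$ adjunction to $\rho_V=\id_V\otimes ev_{V^*,V}$ is exactly $\sigma_{V,V}$; tensoring with $\id_M$ yields $A_M = \tau_M$. For $B_M$, the extra swap $\sigma_{\gl(V),V}$ moves the $\gl(V)$-factor past the outer $V$, so that after taking the adjunction $B_M$ becomes the conjugate $\tau_M\circ(F\rx)_M\circ\tau_M$ (the $F\rx$ piece is $\rx_M$ acting on the \emph{inner} $V$ and $M$, and the conjugation by $\tau_M$ moves this action to the outer $V$ and $M$). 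Putting these together,
\[
\rx F \;=\; \tau \;+\; \tau\circ(F\rx)\circ\tau.
\]

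To conclude I would precompose with $\tau$. Using $\tau^2=\id$ — which follows from $\sigma_{V,V}\circ\sigma_{V,V}=\id_{V\otimes V}$ in any symmetric monoidal category — this gives $\tau\circ\rx F = \id + F\rx\circ\tau$, proving the first identity. The second identity $\tau\circ F\rx - \rx F\circ\tau = -\id$ follows by composing the first identity with $\tau$ on both sides and rearranging with $\tau^2=\id$.

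The one genuinely nontrivial step is the identification $A_M = \tau_M$: one must check that the endomorphism of $V\otimes V$ corresponding under the adjunction to $\id_V\otimes ev_{V^*,V}:V\otimes V^*\otimes V\to V$ equals the symmetry $\sigma_{V,V}$. In coordinates this is the tautology $\sum_{i,j}E_{ij}\otimes E_{ji}=\sigma_{V,V}$ on $V\otimes V$ (the split Casimir of $\gl(V)$ in the standard representation is the flip), but categorically it requires carefully composing the defining zig-zag identity for the duality $V\dashv V^*$ with the braiding. This is where I expect the main care to be needed; everything else in the argument is bookkeeping through the natural linear decomposition of $\rho'$.
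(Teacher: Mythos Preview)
Your proposal is correct and follows essentially the same approach as the paper: both establish the key identity $\rx F = \tau + \tau\circ F\rx\circ\tau$ by decomposing the $\gl(V)$-action on $V\otimes M$ via the Leibniz rule, then deduce the two relations by composing with $\tau$ and using $\tau^2=\id$. Your write-up is more detailed than the paper's (which simply states the key identity and says ``the statement follows''), and your flagging of the identification $A_M=\tau_M$ as the one genuinely delicate step is apt; a minor terminological quibble is that ``precompose with $\tau$'' should read ``compose on the left with $\tau$'' to match the displayed result.
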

\begin{proof}
 The proof is a straightforward computation, relying on the fact that for any 
$(M, \rho)$, $$\rx_{V \otimes (M, \rho)} = \sigma_{V, V} \otimes \id_M + 
(\sigma_{V, V}  \otimes \id_M) \circ \left(\id_V \otimes \rx_{(M, \rho)} \right) 
\circ (\sigma_{V, V}  \otimes \id_M)  $$
 This implies that $\rx F = \tau + \tau \circ F \rx \circ \tau$, and the 
statement follows.
\end{proof}

\begin{remark}
 As in \cite[Section 7.4]{CR}, one can alternatively define $x$ using the 
Casimir natural transformation of the identity functor on $\Repc(\gl(V))$. The 
Casimir $C \in \End(\id_{\Repc(\gl(V))})$ is defined as follows: for every $(M, 
\rho) \in \Repc(\gl(V))$, $C_{(M, \rho)}$ is the composition $$  M 
\xrightarrow{coev_{\gl(V)} \otimes \id_M} \gl(V) \otimes \gl(V) \otimes M 
\xrightarrow{\id_{\gl(V)} \otimes \rho} \gl(V) \otimes M \xrightarrow{\rho} M $$
 We then have $$\rx_M = \frac{1}{2} \left( C_{V\otimes M} - C_V \otimes \id_M - 
\id_V \otimes C_M \right)$$
\end{remark}

\mbox{}

Here is an example which illustrates the effect of the natural transformation 
$\rx$ in $\Repc(\gl(V))$:
\begin{example}
Assume that $\C$ is Karoubian (all idempotents split). Consider the 
decomposition 
 $$F(V^{\otimes n-1}) = V^{\otimes n} \cong \bigotimes_{\mu \vdash n} S^{\mu} V 
\otimes \mu$$ 
 The action of $\rx_{V^{\otimes n-1}}$ is given by a collection of endomorphisms 
$\phi_{\mu} \in \End(\mu)$, such that $\phi_{\mu}$ commutes with the action of 
$G:=S_{\{2, 3, \ldots, n-1\}} \subset S_n$. Thus $\phi_{\mu}$ is diagonalizable 
with eigenspaces given by irreducible $G$-summands of $\mu$. Identifying $G 
\cong S_{n-1}$, we get
 $$ Res^{S_n}_G(\mu) = \bigoplus_{\lambda \in \mu^-} \lambda$$ where the sum is 
over Young diagrams obtained from $\mu$ by erasing one box. The eigenvalue 
corresponding to eigenspace $\lambda$ is then $ct(\mu - \lambda)$.
 
 Now, $$F(S^{\lambda} V) = V\otimes S^{\lambda} V \cong \bigoplus_{\mu \in 
\lambda + \square} S^{\mu} V$$
In fact, we have: 
 $$F(S^{\lambda} V \otimes \lambda) \cong \bigoplus_{\mu \in \lambda + \square} 
S^{\mu} V \otimes \lambda \subset  \bigoplus_{\mu \in \lambda + \square} S^{\mu} 
V \otimes \mu$$
 Thus $\rx_{S^{\lambda} V}$ acts on each direct summand $S^{\mu} \subset 
F(S^{\lambda} V )$ ($\mu \in \lambda + \square$) by the scalar $ct(\mu 
-\lambda)$.
 
\end{example}

From now on we let $\C$ be a tensor category (in particular, abelian), and fix 
$V \in \C$. The category $\Repc(\gl(V))$ is again a tensor category.

We will now consider the full tensor subcategory $\Repc(GL(V), \eps)$ of 
$\Repc(\gl(V))$ (see \cite{D2}, \cite[Section 11]{EHS} for definition). This 
subcategory is defined as the category of representations in $\C$ of a certain 
Hopf algebra object, which is the analogue of $\mathcal{O}(GL(W))$ for a vector 
space $W$. In particular, any subquotient of a direct sum of mixed tensor powers 
of $V$ has a natural structure making it an object of $\Repc(\gl(V))$, and it 
can be shown that the objects of $\Repc(GL(V), \eps)$ are all of this form (cf. 
\cite[Appendix]{D2}).

The subcategory $\Repc(GL(V), \eps)$ is preserved by the type A action defined 
above.

\begin{example}\label{ex:Rep_GL_V}
\mbox{}

 \begin{enumerate}[label={\it \alph*)}]
  \item Let $(\C, V):= (SVec, \bC^{m|n})$. In this case $\Repc(GL(V), \eps) = 
Rep(\gl(m|n))$\footnote{This is ``half'' of the category of all 
finite-dimensional representations of $\gl(m|n)$, as explained in Section 
\ref{ssec:notn_rep_superalg}.}, and the induced type A action on $Rep(\gl(m|n))$ 
coincides with $\sll_{\bZ}$ action as in \cite{BLW} (see Section 
\ref{ssec:cat_action_gl_m_n}). 
  \item Let $(\C, V):= (\cV, V_t)$ ($t \in \bC$). In this case $\Repc(GL(V), 
\eps) \cong \cV$ (see \cite[Section 11]{EHS}), which defines an type A action 
on 
$\cV$, studied below.
 \end{enumerate}
 
\end{example}
We will be interested in the categories $\Repc(GL(V), \eps)$ since the above 
examples are universal among the pairs $(\Repc(GL(V), \eps), V)$ (see 
\cite{EHS}).

The following lemma is straightforward (cf. \cite[Sect. 7]{D1}, \cite[Section 
11]{EHS}).
\begin{lemma}\label{lem:equivar_skeletal_act}
 Let $(\C, V)$, $(\C', V')$ be a pair of tensor categories with a fixed object. 
Consider their respective type A actions, and let $G: \C \longrightarrow \C'$ be 
an SM functor such that $V \mapsto V'$. Then $G$ induces a equivariant SM 
functor $G:\Repc(\gl(V)) \longrightarrow Rep_{\C'}(\gl(V'))$, and a equivariant 
SM functor $G:\Repc(GL(V), \eps) \longrightarrow Rep_{\C'}(GL(V'), \eps)$.
\end{lemma}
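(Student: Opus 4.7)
The plan is to unpack what each piece of structure on $\Repc(\gl(V))$ looks like and then observe that the SM functor $G$ carries each piece to the corresponding piece for $(\C', V')$. First I would check that $G$ lifts to a functor $\Repc(\gl(V)) \to Rep_{\C'}(\gl(V'))$ on objects and morphisms. Since $G$ is SM it preserves duals, so $G(\gl(V)) = G(V \otimes V^*) \cong G(V) \otimes G(V)^* = V' \otimes V'^* = \gl(V')$, and this identification is compatible with the Lie algebra object structure (bracket built from composition of evaluations/coevaluations, which $G$ respects). Hence given $(M, \rho) \in \Repc(\gl(V))$, the object $G(M)$ with action $G(\rho): \gl(V') \otimes G(M) \to G(M)$ (after the canonical identifications) is an object of $Rep_{\C'}(\gl(V'))$; morphisms in $\Repc(\gl(V))$ are just morphisms in $\C$ commuting with the action, and $G$ carries these to morphisms of $\gl(V')$-representations. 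The SM structure on $\Repc(\gl(V))$ is inherited from $\C$ (tensor products and duals computed in $\C$, with $\gl(V)$ acting diagonally), and since $G$ is SM this SM structure is preserved.

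Next I would verify the equivariance, i.e.\ compatibility with the data $(F, E, \rx, \tau)$ of Construction~\ref{constr:skeletal_action_Rep_gl}. The isomorphism $G \circ F_{\C} \cong F_{\C'} \circ G$ is just the SM coherence $G(V \otimes -) \cong G(V) \otimes G(-) = V' \otimes G(-)$, and likewise $G \circ E_{\C} \cong E_{\C'} \circ G$ using that $G$ preserves duals. Under these identifications, $G(\tau_{(M,\rho)}) = G(\sigma_{V,V} \otimes \id_M) = \sigma_{V',V'} \otimes \id_{G(M)} = \tau_{G(M,\rho)}$ because $G$ preserves symmetry morphisms. For $\rx$, recall that $\rx_{(M,\rho)} \in \End(V \otimes M)$ is the image of $\rho$ under the rigidity isomorphism $\Hom_{\C}(V \otimes V^* \otimes M, M) \cong \End(V \otimes M)$; since $G$ preserves the evaluation/coevaluation morphisms defining this isomorphism, it intertwines the two descriptions, giving $G(\rx_{(M,\rho)}) = \rx_{(G(M), G(\rho))}$. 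This is exactly the equivariance required by \cite[5.2.1]{CR}.

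For the second assertion, I would use the description of $\Repc(GL(V), \eps)$ as the full subcategory of subquotients of finite direct sums of mixed tensor powers $V^{\otimes p} \otimes V^{* \otimes q}$. Since $G$ is SM and preserves duals, it takes such mixed tensor powers of $V$ to the corresponding mixed tensor powers of $V'$; as $G$ is additive and exact enough to preserve the relevant subquotient structure (subobjects and quotients of objects in the image land in $\Repc(GL(V'), \eps)$ by its defining closure property), the restriction of $G$ factors through $Rep_{\C'}(GL(V'), \eps)$. The type A action on this subcategory is by construction the restriction of the action on the ambient $\Repc(\gl(V))$, so equivariance is inherited from the previous paragraph.

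The only slightly delicate point is making the identifications $G(V^*) \cong G(V)^*$ and $G(\gl(V)) \cong \gl(V')$ precise enough that the statement $G(\rx) = \rx$ is literally true rather than only true up to coherent isomorphism; this is the step that would require the most care, and it is handled by the standard fact that an SM functor comes equipped with canonical duality isomorphisms that commute with $ev$ and $coev$, as in \cite[Sect.~7]{D1} and \cite[Section~7.2]{EHS}. Everything else is formal manipulation with the SM coherence data.
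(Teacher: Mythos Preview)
Your proposal is correct and is precisely the straightforward verification the paper has in mind: the paper does not actually give a proof, only the remark that the lemma is straightforward together with the same references \cite[Sect.~7]{D1} and \cite[Section 7.2]{EHS} that you cite for the delicate coherence point. You have simply written out the formal check that an SM functor transports each piece of the data $(F, E, \rx, \tau)$ from Construction~\ref{constr:skeletal_action_Rep_gl}, which is exactly what is meant.
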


\begin{corollary}\label{cor:equiv_Deligne_functor}
Let $(\C, V)$ be a tensor category with a fixed object, and set $t := \dim V$. 
The additive SM functors
$$\mathcal{D}_t \stackrel{\mathbf{S}_{V}}{\longrightarrow} \Repc(GL(V), \eps)$$ 
are equivariant with respect to the type A action.

\end{corollary}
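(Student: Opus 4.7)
The plan is to reduce the statement to Lemma \ref{lem:equivar_skeletal_act}. The main obstacle is that $\mathcal{D}_t$ is not necessarily abelian, so it is not itself a tensor category and Lemma \ref{lem:equivar_skeletal_act} does not apply directly. I would bypass this by observing that the Construction \ref{constr:skeletal_action_Rep_gl} makes sense in any rigid SM category: every object $M$ of such a category carries a tautological $\gl(V_t)$-module structure $\rho_M: V_t \otimes V_t^* \otimes M \to M$, built from the evaluation and symmetry morphisms. The natural transformations $F = V_t \otimes (-)$, $E = V_t^* \otimes (-)$, and $\tau = \sigma_{V_t, V_t} \otimes \id$ are visibly built from the SM structure alone, and $\rx_M \in \End(V_t \otimes M)$ is the image of $\rho_M$ under the tensor-hom adjunction.

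With this rephrasing, I would argue that any additive SM functor $G: \mathcal{C} \to \mathcal{C}'$ between rigid SM categories with $G(V) \simeq V'$ intertwines the corresponding type A data. The compatibility with $F$, $E$, and $\tau$ is immediate from the definition of an SM functor, since $\ev$, $\coev$, and $\sigma_{-,-}$ are preserved up to the canonical monoidal structure isomorphisms of $G$. The compatibility with $\rx$ reduces to the statement that $G$ carries the tautological $\gl(V)$-action on $M$ to the tautological $\gl(V')$-action on $G(M)$, which is again a direct consequence of $G$ preserving $\ev_V$, $\sigma_{V,V}$, and the monoidal structure.

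Applied to $G = \mathbf{S}_V$, this reasoning shows that the structure isomorphisms $\mathbf{S}_V(V_t) \simeq V$ and $\mathbf{S}_V(V_t^*) \simeq V^*$ induce natural isomorphisms $F' \circ \mathbf{S}_V \simeq \mathbf{S}_V \circ F$ and $E' \circ \mathbf{S}_V \simeq \mathbf{S}_V \circ E$, under which $\mathbf{S}_V(\tau) = \tau'$ and $\mathbf{S}_V(\rx) = \rx'$. This is precisely the statement of equivariance.

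The main difficulty, such as it is, lies in the bookkeeping: carefully tracing through the structure isomorphisms of the SM functor $\mathbf{S}_V$ to verify that $\rx$ is preserved. However, this is a routine diagram chase once one has identified $\rx_M$ with the image of the tautological action $\rho_M$ under the adjunction $\Hom(V_t \otimes V_t^* \otimes M, M) \cong \End(V_t \otimes M)$, and observed that $\mathbf{S}_V$ preserves both the adjunction unit/counit (since these are constructed from $\ev_{V_t}, \coev_{V_t}$) and the tautological action (for the same reason).
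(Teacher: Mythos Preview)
Your proposal is correct and follows essentially the same route as the paper, which states the corollary without proof as an immediate consequence of Lemma~\ref{lem:equivar_skeletal_act}: the data $(F,E,\tau,\rx)$ of Construction~\ref{constr:skeletal_action_Rep_gl} are built purely from the SM structure and the $\gl(V)$-action $\rho$, so any SM functor carrying $V_t$ to $V$ intertwines them, and abelianness plays no role in this check.

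One minor imprecision worth tightening: your sentence ``every object $M$ of such a category carries a tautological $\gl(V_t)$-module structure $\rho_M$ \ldots\ built from the evaluation and symmetry morphisms'' is not true for an arbitrary rigid SM category with a distinguished object $V$ (a generic object has no canonical non-trivial $\gl(V)$-action). It \emph{is} true for $\mathcal{D}_t$, because every object there is by construction a direct summand of a mixed tensor power $V_t^{\otimes r}\otimes V_t^{*\otimes s}$, and the $\gl(V_t)$-action on such a tensor power is indeed assembled from $ev$, $coev$, and $\sigma$; this action then restricts to summands since all morphisms in $\mathcal{D}_t$ are $\gl(V_t)$-equivariant. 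Since your argument only needs the tautological action on $\mathcal{D}_t$ and on $\Repc(GL(V),\eps)$, this does not affect its validity, but you should phrase it for those specific categories rather than for ``any rigid SM category''.
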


As it was mentioned before, if the eigenvalues of the operator $\rx$ on 
$\Repc(GL(V), \eps)$ are integers, then the functors $F_a, E_a$ define an 
$\sll_{\bZ}$-action on the Grothendieck group of the abelian category 
$\Repc(GL(V), \eps)$, in the sense of \cite{R, BLW}. 

In the Example \ref{ex:Rep_GL_V} (a), this is indeed the case. We will show that 
 
\begin{proposition}\label{prop:eigenval_V_t_integral}
  The eigenvalues of $\rx$ in $\cV$ are integers iff $t \in \bZ$. 
\end{proposition}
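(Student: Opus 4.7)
The plan is to establish the two directions of the equivalence separately.

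For the implication $t \in \bZ \Rightarrow$ integer eigenvalues, I would invoke the family of symmetric monoidal functors $\mathcal{F}_{m|n}: \cV \to Rep(\gl(m|n))$ (with $m-n = t$) from Section~\ref{sssec:abelian_Deligne}, together with the fact that they restrict to equivalences $\mathcal{V}^k_t \xrightarrow{\sim} Rep^k(\gl(m|n))$ for $m, n \gg k$. Since each $\mathcal{F}_{m|n}$ is SM and sends $V_t \mapsto \bC^{m|n}$, Lemma~\ref{lem:equivar_skeletal_act} gives that it is equivariant with respect to the type A action, so it intertwines the $\rx$'s. As recalled in Example~\ref{ex:Rep_GL_V}(a), the eigenvalues of $\rx$ on $Rep(\gl(m|n))$ are integers. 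The local equivalence then transports this integrality back to $\mathcal{V}^k_t$; since every object of $\cV$ lies in some $\mathcal{V}^k_t$, every eigenvalue of $\rx$ on $\cV$ is an integer.

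For the converse, I would assume $t \notin \bZ$ (so in particular $t \neq 0$) and exhibit a non-integer eigenvalue on the object $F(V_t^*) = V_t \otimes V_t^*$. Since $\cV = \mathcal{D}_t$ is semisimple in this range, we have a decomposition $V_t \otimes V_t^* \cong \triv \oplus \mathbf{T}(\square, \square)$. The plan is to compute $\rx_{V_t^*}$ using the Casimir reformulation from the remark after Construction~\ref{constr:skeletal_action_Rep_gl}:
\[
\rx_M \;=\; \tfrac{1}{2}\bigl(C_{V_t \otimes M} - C_{V_t} \otimes \id_M - \id_{V_t} \otimes C_M\bigr).
\]
From the definition of the Casimir in a rigid SM category together with the defining property $\dim V_t = t$, one finds $C_{V_t} = C_{V_t^*} = t \cdot \id$, while $C_{\triv} = 0$ and $C_{\mathbf{T}(\square,\square)} = 2t \cdot \id$ (the last formula matches the scalar $2n$ by which the Casimir acts on the adjoint representation of $\gl(n)$, and extends to $\mathcal{D}_t$ by the universal property). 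Substituting yields eigenvalues $-t$ on the $\triv$-summand and $0$ on the $\mathbf{T}(\square,\square)$-summand. Since $-t \notin \bZ$ when $t \notin \bZ$, the eigenvalue $-t$ is the desired non-integer eigenvalue.

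The main obstacle is justifying the Casimir scalar $C_{\mathbf{T}(\square,\square)} = 2t \cdot \id$ in the abstract Deligne setting. The computation $C_{V_t} = t \cdot \id$ is essentially a restatement of the categorical dimension and follows directly from the defining universal property, but the adjoint-type object requires more care. One route is to reduce to the case of $\gl(n)$ via the full SM functor $\mathbf{S}_{n,0}: \mathcal{D}_n \to Rep(GL(n))$ for $n \in \bZ_+$, compute the Casimir scalar $2n$ there, and then use the fact that all structure constants in $\mathcal{D}_t$ depend polynomially on $t$ to conclude the formula $2t \cdot \id$ for general $t \in \bC$.
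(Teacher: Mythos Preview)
Your proposal is correct and takes a somewhat different route from the paper. The paper's proof is simply a forward reference: it asserts that the computations in Sections~\ref{sec:action_on_GL_t} and~\ref{sec:actions_on_V} show the full set of eigenvalues of $\rx$ to be $\bZ \cup (\bZ - t)$, from which the equivalence is immediate. Your forward implication (via the local equivalences $\mathcal{V}_t^k \cong Rep^k(\gl(m|n))$ and integrality of $\rx$ on $Rep(\gl(m|n))$) is essentially the same mechanism the paper uses in Section~\ref{sec:actions_on_V}. For the converse, however, you give a direct Casimir computation on $V_t \otimes V_t^*$ exhibiting the eigenvalue $-t$, whereas the paper invokes the general Proposition~\ref{prop:eigenvalues_x_Deligne}, whose proof passes through the lift to the formal parameter $T$. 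Your argument is more elementary and self-contained for this particular statement; the paper's approach yields the full eigenvalue set, which is needed later anyway.

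One simplification: the ``main obstacle'' you identify, namely justifying $C_{\mathbf{T}(\square,\square)} = 2t \cdot \id$, is in fact unnecessary. Since $\triv$ and $\mathbf{T}(\square,\square)$ are non-isomorphic simples (for $t \notin \bZ$), Schur's lemma forces $\rx_{V_t^*}$ to be block-diagonal, so it suffices to compute its restriction to the $\triv$-summand. That requires only $C_{\triv} = 0$ and $C_{V_t} = C_{V_t^*} = t \cdot \id$, both of which follow immediately from the defining relations in $\mathcal{D}_t$, and already yields the eigenvalue $\tfrac{1}{2}(0 - t - t) = -t \notin \bZ$.
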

\begin{proof}
 It will be shown in Sections \ref{sec:action_on_GL_t} and 
\ref{sec:actions_on_V} that the set of eigenvalues of $\rx$ is $\bZ \cup \bZ-t$.
\end{proof}

Thus the type A action on $\cV$ defines an $\sll_{\bZ}$-action whenever $t \in 
\bZ$.

We now formulate a corollary, which is a direct consequence of \cite[Theorem 
11.1.2]{EHS}. This corollary essentially states that any categorical type A 
action 
on a category of the form $\Repc(GL(V), \eps)$ which originates in an type A 
action can be described using the categorical type A actions on the 
tensor categories $\cV$, $Rep(\gl(m|n))$. 

Recall that tensor categories which are finite-length abelian categories and 
have 
finite-dimensional $\Hom$-spaces are called pre-Tannakian (see \cite[Section 
8.1]{D1} for 
definition).

\begin{corollary}
 Let $\C$ be a pre-Tannakian tensor category, and let $V \in \C$ whose dimension 
$t:=\dim(V)$ is an integer.  
 If $V$ is ``torsion-free'', i.e. $S^{\lam} V \neq 0$ for any $\lam$, then 
there 
exists an equivariant equivalence of $\sll_{\bZ}$-categorical modules 
$$\Repc(GL(V), \eps) \cong \cV$$
 If $V$ is ``torsion'', i.e. there exists $\lam$ such that $S^{\lam} V=0$, then 
there exists an equivariant equivalence of $\sll_{\bZ}$-categorical modules 
$$\Repc(GL(V), \eps) \cong Rep(\gl(m|n))$$ for some $m, n\in \bZ_+$, $m-n=t$.
 \end{corollary}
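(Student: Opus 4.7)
The plan is to reduce the statement to Theorem 7.1.2 of \cite{EHS} combined with Lemma~\ref{lem:equivar_skeletal_act} and Corollary~\ref{cor:equiv_Deligne_functor}. The key conceptual point is that once we produce an SM equivalence between the underlying tensor categories which sends the distinguished objects to each other, equivariance of the categorical type A action is automatic, since $(F,E,\rx,\tau)$ in Construction~\ref{constr:skeletal_action_Rep_gl} is defined purely in terms of the monoidal structure and the distinguished object.

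First, I would invoke Theorem 7.1.2 of \cite{EHS}: if $V$ is torsion-free, it produces a tensor equivalence $\C \cong \cV$ taking $V$ to $V_t$; if $V$ is torsion, there exist $m,n\in\bZ_+$ with $m-n=t$ and a tensor equivalence $\C \cong Rep(\gl(m|n))$ taking $V$ to $\bC^{m|n}$. Since $\Repc(GL(V),\eps)$ is characterized intrinsically as the SM subcategory generated by subquotients of mixed tensor powers of $V$ (cf.\ \cite[Appendix]{D2}), this equivalence restricts to an SM equivalence $\Repc(GL(V),\eps) \cong \cV$ (resp.\ $\cong Rep(\gl(m|n))$), in view of Example~\ref{ex:Rep_GL_V}.

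Next, I would apply Lemma~\ref{lem:equivar_skeletal_act} to the resulting SM functor: since it sends $V$ to the distinguished generator of the target, it is equivariant with respect to the type A action $(F,E,\rx,\tau)$. Because $t \in \bZ$, Proposition~\ref{prop:eigenval_V_t_integral} guarantees that the eigenvalues of $\rx$ on both sides are integers, so the type A action refines to a genuine categorical $\sll_{\bZ}$-action via the generalized eigenspace decompositions $F = \bigoplus_{a\in\bZ}F_a$, $E = \bigoplus_{a\in\bZ}E_a$. Equivariance with respect to $(F,E,\rx,\tau)$ automatically passes to equivariance with respect to these refined decompositions, yielding the desired equivariant equivalence of $\sll_{\bZ}$-categorical modules.

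The main obstacle is confirming that the equivalence supplied by \cite[Theorem 7.1.2]{EHS} genuinely sends $V$ to the prescribed distinguished object and restricts correctly to the $\Repc(GL(V),\eps)$ subcategory, rather than being merely an abstract equivalence of ambient pre-Tannakian categories. This is however built into the statement of that theorem, which is formulated via the universal property of $\cV$ (classifying $t$-dimensional torsion-free objects) and the analogous recognition of $Rep(\gl(m|n))$ in the torsion case; once this is in hand, the rest is a formal consequence of the functoriality results already established in Section~\ref{sec:skeletal_action}.
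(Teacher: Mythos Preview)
Your approach is essentially the same as the paper's, which simply states that the corollary is a direct consequence of \cite[Theorem 7.1.2]{EHS}; the additional step you spell out---that an SM equivalence carrying $V$ to the distinguished generator is automatically equivariant, by Lemma~\ref{lem:equivar_skeletal_act} and Construction~\ref{constr:skeletal_action_Rep_gl}---is exactly the intended unpacking.

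One correction: \cite[Theorem 7.1.2]{EHS} does \emph{not} produce a tensor equivalence $\C \cong \cV$ as you write. In general $\C$ can be much larger than $\cV$ (e.g.\ $\C = SVec$ with $V = \bC^{m|n}$, where $\C$ is certainly not equivalent to $Rep(\gl(m|n))$). What the theorem yields is directly an equivalence $\Repc(GL(V),\eps) \cong \cV$ (respectively $Rep(\gl(m|n))$), sending $V$ to $V_t$ (respectively $\bC^{m|n}$); this arises from the universal property, which gives an exact tensor functor $\cV \to \C$ with essential image $\Repc(GL(V),\eps)$. Your detour through an ambient equivalence and then a ``restriction'' is therefore misstated, though you do land at the correct conclusion. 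Once this is fixed, the remainder of your argument goes through as written.
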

 
 \begin{remark}
  A similar result holds when $t:=\dim(V)$ is not an integer, but as it is seen 
in Theorem \ref{thrm:action_Deligne_cat_ss}, this is an equivariant equivalence 
of $\sll_{\bZ}\times \sll_{\bZ}$-categorical modules.
 \end{remark}

 \subsection{Categorical action on 
\texorpdfstring{$Rep(\gl(m|n))$}{Rep(gl(m|n))}}\label{ssec:cat_action_gl_m_n}
 
We now describe two special cases of the above construction: a categorical 
$\sll_{\bZ}$-action on $Rep(\gl(m|n))$ and specifically on $Rep(\gl_m)$ (for 
explicit descriptions of these actions, see e.g. \cite{CR, BLW}). 
\begin{example}
{\bf Case of $Rep(\gl_m)$}: Recall that the singular vectors in 
$\mathit{L}(\lam) \otimes \bC^m$ have weights $\vec{\lam}+\eps_i$ for different 
$i$; in terms of bipartitions, it means that $$F(\mathit{L}(\lam)) \cong 
\bigoplus_{\mu \in \lam + \InnaD{\young(\bullet)} \sqcup \lam - \square} 
\mathit{L}(\mu) $$ 
Moreover, recalling the definition of $\rx$ through the Casimir operator, one 
can immediately see that the eigenvalue of $\rx_{\mathit{L}(\lam)}$ on 
$\mathit{L}(\mu)$ is $\vec{\lam}_i -i +1$, where $i$ is such that $\vec{\mu} = 
\vec{\lam} +\eps_i$. Again, in terms of bipartitions, this value is equal to 
$ct(\mu^{\bullet} -\lam^{\bullet})$ if $\mu\in \lam + \InnaD{\young(\bullet)}$, 
and is 
equal to $-ct(\lambda^{\circ} -\mu^{\circ} ) -m$ if $\mu\in \lam - \square$.

Thus $$F_a(\mathit{L}(\lam)) = \bigoplus_{\mu\in \lam + 
\InnaD{\young(\bullet)}_a } 
\mathit{L}(\mu)  \oplus  \bigoplus_{\mu\in \lam - \square_{-m-a} 
}\mathit{L}(\mu) $$
(notation as in Section \ref{ssec:notn_Young}), and there is an isomorphism of 
$\sll_{\bZ}$-modules
$$\bC \otimes_{\bZ} Gr(Rep(\gl_m)) \cong \wedge^m \bC^{\bZ}$$
 \end{example}
 
 A similar situation appears in the general case of $Rep(\gl(m|n))$ (cf. proof 
of Proposition \ref{prop:action_standards}: it was shown in \cite{BLW} that in 
this case 
 there is an isomorphism of $\sll_{\bZ}$-modules
$$\bC \otimes_{\bZ} Gr(Rep(\gl(m|n))) \stackrel{\sim{}}{\longrightarrow} 
\wedge^m \bC^{\bZ} \otimes \wedge^n (\bC^{\bZ})^*$$ sending Kac modules to pure 
wedges.

\section{Operator \texorpdfstring{$\rx$}{x}}\label{sec:action_on_GL_t}

\subsection{Eigenvalues of the operator 
\texorpdfstring{$\rx$}{x}}\label{ssec:eigenval_oper_x}
We will now describe the generalized eigenspaces of the natural transformation 
$\rx$ when $t \notin \bZ$.
\begin{proposition}\label{prop:eigenvalues_x_Deligne}
 Let $\lam$ be a bipartition, and let $t \notin \bZ$. The generalized eigenspace 
of $\rx_{\mathbf{T}(\lam)}$ corresponding to $a\in \bC$ is 
 $$\bigoplus_{\mu\in \lam + \InnaD{\young(\bullet)}_a } \mathbf{T}(\mu) \oplus  
\bigoplus_{\mu\in \lam - \square_{-\InnaC{(a+t)}} } \mathbf{T}(\mu)$$
 (notation as in Section \ref{ssec:notn_Young}).
 % %\Inna{ct=column-row}
\end{proposition}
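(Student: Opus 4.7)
The plan is two-step: first decompose $V_t\otimes\mathbf{T}(\lambda)$ into a sum of indecomposables $\mathbf{T}(\mu)$, and then compute the scalar by which $\rx$ acts on each summand, using the Casimir description of $\rx$.

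\emph{Step 1 (Pieri-type decomposition).} Since $t\notin\bZ$, the category $\mathcal{D}_t$ is abelian and semisimple, and the indecomposable objects are the $\mathbf{T}(\mu)$. I would first establish the analogue of the mixed Pieri rule
$$V_t\otimes\mathbf{T}(\lambda)\;\cong\;\bigoplus_{\mu\in\lambda+\blacksquare}\mathbf{T}(\mu)\;\oplus\;\bigoplus_{\mu\in\lambda-\square}\mathbf{T}(\mu).$$
For generic $t$ this is the walled-Brauer branching rule (see \cite{CW}); an alternative route is to transport the branching rule from $Rep(\gl_m)$ via $\mathbf{S}_{m,0}$ for an integer $m\gg|\lambda|$, then deform to generic $t$ via $Lift_t$, and observe that both sides of the identity depend polynomially on $t$ and agree on a Zariski-dense set of integers.

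\emph{Step 2 (Eigenvalue via Casimir).} I would use the Casimir formula recalled in Section \ref{ssec:skeletal_action_tens_cat}:
$$\rx_{\mathbf{T}(\lambda)}=\tfrac12\bigl(C_{V_t\otimes\mathbf{T}(\lambda)}-C_{V_t}\otimes\id-\id\otimes C_{\mathbf{T}(\lambda)}\bigr).$$
Because $\mathcal{D}_t$ is semisimple, $C\in\End(\id_{\mathcal{D}_t})$ acts on each $\mathbf{T}(\nu)$ by a scalar $c(\nu)\in\bC$. Hence $\rx$ acts on each $\mathbf{T}(\mu)$-summand of $V_t\otimes\mathbf{T}(\lambda)$ by the scalar
$$\tfrac12\bigl(c(\mu)-c(V_t)-c(\lambda)\bigr).$$
The scalars $c(\nu)$ are computed by realizing $\mathbf{T}(\nu)$ as a summand of $V_t^{\otimes|\nu^\bullet|}\otimes V_t^{*\otimes|\nu^\circ|}$ and decomposing the Casimir of $\gl(V_t)$ as a sum of pair interactions (the categorical analogue of $\sum_{i<j}(e_{ab}\otimes e_{ba})_{ij}$). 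The result is the expected analogue of the $\gl_m$ formula, roughly
$$c(\nu)=t\bigl(|\nu^\bullet|+|\nu^\circ|\bigr)+2\!\!\sum_{(i,j)\in\nu^\bullet}\!\!(j-i)\;-\;2\!\!\sum_{(i,j)\in\nu^\circ}\!\!(j-i)\;-\;2t|\nu^\circ|,$$
and in particular $c(V_t)=t$.

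\emph{Step 3 (Combine).} Plugging these scalars into the formula for $\rx$, the contributions from $c(\lambda)$ and $c(V_t)$ cancel against the common part of $c(\mu)$, leaving: for $\mu\in\lambda+\blacksquare_a$ (i.e.\ a box of content $a$ added to $\lambda^\bullet$) the eigenvalue is $a$; and for $\mu\in\lambda-\square_b$ (i.e.\ a box of content $b$ removed from $\lambda^\circ$) the eigenvalue is $-b-t$. Reindexing the second family by $a=-b-t$, i.e.\ $b=-(a+t)$, gives exactly the claimed description of the generalized eigenspace corresponding to $a$.

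\emph{Expected main obstacle.} The Pieri rule in Step 1 is standard once one is willing to interpolate from $Rep(\gl_m)$; the essential technical point is the explicit computation of the Casimir scalar $c(\nu)$ in $\mathcal{D}_t$. Doing it directly from the categorical definition of $\gl(V_t)$ and its coevaluation requires some bookkeeping to separate the contributions of $V_t$-legs and $V_t^*$-legs (the latter producing the characteristic shift by $t$). The cleanest justification is again polynomiality: both sides of the putative formula for $c(\nu)$ are polynomial in $t$ of bounded degree, and coincide with the known classical Casimir eigenvalues at $t=m\in\bZ_+$ large via $\mathbf{S}_{m,0}$, hence everywhere.
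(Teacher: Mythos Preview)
Your proposal is correct and follows the same overall strategy as the paper --- establish the Pieri decomposition for generic $t$ and then interpolate the eigenvalue from large integer values of $t$ --- but the mechanism of interpolation is different.

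The paper does \emph{not} compute the Casimir scalar $c(\nu)$. Instead it observes that the morphism $\rx_{V_t^{\otimes r}\otimes V_t^{*\otimes s}}$, written in the walled Brauer algebra, does not involve the parameter $t$ at all, so it lifts verbatim to $\mathcal{D}_T$ for a formal variable $T$. Likewise the primitive idempotent $e^{\lambda}_{\mu}$ lifts (for $t$ outside the finite bad set) to the corresponding idempotent over $\bC((T))$. Hence $x^{\lambda}_{\mu,T}:=e^{\lambda}_{\mu}\,\rx\,e^{\lambda}_{\mu}\in\End_{\mathcal{D}_T}(\mathbf{T}(\mu))=\bC((T))$ is a scalar $\chi^{\lambda}_{\mu}(T)$ independent of the specialization, and evaluating at a single large integer $t=d$ via $\mathbf{S}_{d,0}$ determines it. This bypasses any explicit Casimir computation.

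Your Casimir route works too and has the advantage of producing a closed formula for $c(\nu)$; its cost is the extra bookkeeping you flag (separating $V_t$- and $V_t^*$-legs, getting the $-2t|\nu^{\circ}|$ shift right), which the paper's lifting argument avoids entirely. In short: same interpolation philosophy, but the paper interpolates the single matrix entry $\chi^{\lambda}_{\mu}$ directly, whereas you interpolate the intermediate quantity $c(\nu)$ and then take differences.
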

\begin{remark}
 Note that since $t \notin \bZ$ and thus $Rep(GL_t)$ is semisimple, the operator 
$\rx_{\mathbf{T}(\lam)}$ is diagonalizable.
\end{remark}

\begin{proof}
Fix $\lam \vdash (r, s)$, and let $t \in \bC \setminus \{0, \pm 1, \pm 2, 
\ldots, \pm (r+s+1) \} $.

The statement of the proposition is equivalent to computing the generalized 
eigenvalues of $\rx_{\mathbf{T}(\lam)}$. Each summand $\mathbf{T}(\mu)$ of 
$F(\mathbf{T}(\lam))$ is indecomposable, and hence corresponds to a generalized 
eigenvalue of $\rx_{\mathbf{T}(\lam)}$.

We would like to say that the eigenvalue of $\rx_{\mathbf{T}(\lam)}$ on 
$\mathbf{T}(\mu)$ depends polynomially on $t$; we will compute separately the 
eigenvalue in the special case of $t \in \bZ_{>>0}$, and the polynomiality would 
allow us to interpolate this result to all values of $t$. 

 We will denote $x^{\lam}_{\mu, t} = p \circ \rx_{\mathbf{T}(\lam)} \circ i$ 
where $i, p$ are the inclusion and the projection to of the direct summand 
$\mathbf{T}(\mu) \subset \mathbf{T}(\lambda) \otimes V_t$ in $\mathcal{D}_t$.

\mbox{}

{\bf Case $t=d \in \bZ_{> r+s}$}: 
% % % %Denote by $\eps_i$, $i \in \{1,2, \ldots, d\}$ the usual weights in 
% % % $\gl_d(\bC)$ corresponding to matrices $E_{ii}$. For any $\alpha = \sum_i 
% % % \alpha_i \eps_i$ ($\alpha_i \in \bZ$), we denote by $\mathit{L}(\alpha)$ the 
% % % simple $GL_d$-module with highest weight $\alpha$.

In this case $\mathbf{S}_{t=d}(\mathbf{T}(\mu)) = \mathit{L}(\mu)$ (see Notation 
\ref{ssec:notn_rep_superalg}). Then 
$$\mathbf{S}_{t=d}(F(\mathbf{T}(\lam))) = F(\mathit{L}(\lam)) \cong 
\bigoplus_{\mu \in \lam + \InnaD{\young(\bullet)} \sqcup \lam - \square} 
\mathit{L}(\mu) 
$$ 
% % %where $\vec{\mu} := \sum_{i \geq 1} \mu^{\bullet}_i \eps_i - \sum_{i \geq 1} 
% % \mu^{\circ}_i \eps_{d-i+1}$. The Young diagrams $\mu$ appearing in the 
% % decomposition of $F(\mathbf{T}(\lam))$ correspond to $\vec{\mu} = \vec{\lam} 
% % +\eps_i$ for some $i$. Since $ \mathbf{S}_{t=d}$ is equivariant, the eigenvalue 
% % of $\rx_{\mathbf{T}(\lam)}$ on $\mathbf{T}(\mu)$ equals the eigenvalue of 
% % $\rx_{\mathit{L}(\vec{\lam})}$ on $\mathit{L}(\vec{\mu})$, which is 
% % $\vec{\lam}_i -i +1$ (see Section \ref{ssec:cat_action_gl_m_n}). The latter is 
% % equal to $ct(\mu^{\bullet} -\lam^{\bullet})$ if $\mu\in \lam + \blacksquare$, 
% % and is equal to $-ct(\lambda^{\circ} -\mu^{\circ} ) -d$ if $\mu\in \lam - 
% % \square$. 

By Section \ref{ssec:cat_action_gl_m_n}, the generalized eigenvalue of 
$\rx_{\mathit{L}(\lam)}$ on $\mathit{L}(\mu)$ is $ct(\mu^{\bullet} 
-\lam^{\bullet})$ if $\mu\in \lam + \InnaD{\young(\bullet)}$, and is equal to 
$-ct(\lambda^{\circ} -\mu^{\circ} ) -d$ if $\mu\in \lam - \square$. By Corollary 
\ref{cor:equiv_Deligne_functor}, the generalized eigenvalue of $ 
\rx_{\mathbf{T}(\lam)}$ on $\mathbf{T}(\mu)$ is the same.

\mbox{}

We will now use the map $Lift_t: K_0(\mathcal{D}_t) \rightarrow 
K_0(\mathcal{D}_T)$ defined in Section \ref{sssec:formal_param}. As it was 
mentioned before, for any $t \notin \{0, \pm 1, \pm 2, \ldots, \pm 
(\abs{\mu^{\bullet}} + \abs{\mu^{\circ}}) \}$, we have: $Lift_t(\mathbf{T}(\mu)) 
=\mathbf{T}(\mu)$. 

Consider the idempotent $e^{\lam}_{\mu} \in Br_{\bC}(r+1, s)$ which is the 
projector onto the multiplicity space of $T^{\mu}$ in $V\otimes 
\mathbf{T}(\lam)\subset V_t^{\otimes r+1} \otimes V_t^{* \otimes s}$ (by abuse 
of notation, we will denote by $e^{\lam}_{\mu}$ the corresponding idempotent in 
$Br_{\bC\InnaD{((T-t))}}(r+1, s)$ as well)\footnote{We stress that these 
idempotents 
depend on $\lambda$.}.

In $\mathcal{D}_T$, we have (see also \cite[Section 7]{CW}):
$$ V_T \otimes \mathbf{T}(\lam) \cong \bigoplus_{\mu\in \lam + 
\InnaD{\young(\bullet)}} 
\mathbf{T}(\mu) \oplus \bigoplus_{\mu\in \lam - \square} \mathbf{T}(\mu)$$
Together these two facts imply that for $t \in \bC \setminus \{0, \pm 1, \pm 2, 
\ldots, \pm (r+s+1) \}$, a similar decomposition holds in $\mathcal{D}_t$:
$$F(\mathbf{T}(\lam)) = V_t \otimes \mathbf{T}(\lam) \cong \bigoplus_{\mu\in 
\lam + \InnaD{\young(\bullet)}} \mathbf{T}(\mu) \oplus \bigoplus_{\mu\in \lam - 
\square} 
\mathbf{T}(\mu)$$
Thus the idempotents $e^{\lam}_{\mu}$ are primitive in $\mathcal{D}_T$ and for 
the above values of $t$, and the lifting of $e^{\lam}_{\mu}$ is exactly 
$e^{\lam}_{\mu} \in Br_{\bC\InnaD{((T-t))}}(r+1, s)$ whenever $t \in \bC 
\setminus \{0, 
\pm 1, \pm 2, \ldots, \pm (r+s+1) \} $.
% % %; equivalently, $\tilde{e}^{^{(r+1, s)}}_{\mu, t} = e^{^{(r+1, s)}}_{\mu}$.

\mbox{}

{\bf Computation of $x^{\lam}_{\mu, t}$ for all $t \in \bC \setminus \{0, \pm 1, 
\pm 2, \ldots, \pm (r+s+1) \}$:}
We will now describe how to lift the endomorphism $x^{\lam}_{\mu, t}$ to an 
endomorphism in $\mathcal{D}_T$.

We have $$x^{\lam}_{\mu, t} = e^{\lam}_{\mu} \rx_{_{V_t^{\otimes r} \otimes 
V_t^{* \otimes s}}} e^{\lam}_{\mu}$$  as an endomorphism in 
$\End_{\mathcal{D}_t}(\mathbf{T}(\mu)) \subset \End_{\mathcal{D}_t}(V_t^{\otimes 
{r+1}} \otimes V_t^{* \otimes s})$.
First, consider the morphism $ \rx_{_{V_T^{\otimes r} \otimes V_T^{* \otimes 
s}}}$ in $\mathcal{D}_T$, where $T$ is a formal variable. We have $$ 
\rx_{_{V_T^{\otimes r} \otimes V_T^{* \otimes s}}} \rvert_{T=t} = 
\rx_{_{V_t^{\otimes r} \otimes V_t^{* \otimes s}}}$$ (this is obvious, since the 
parameter $t$ is not involved in the definition of $\rx_{_{V_t^{\otimes r} 
\otimes V_t^{* \otimes s}}}$).

Secondly, we define an endomorphism $$x^{\lam}_{\mu, T}:=e^{\lam}_{\mu} 
\rx_{_{V_T^{\otimes r} \otimes V_T^{* \otimes s}}} e^{\lam}_{\mu} \in 
\End_{\mathcal{D}_T}(\mathbf{T}(\mu)) \subset \End_{\mathcal{D}_T}(V_T^{\otimes 
{r+1}} \otimes V_T^{* \otimes s})$$ This endomorphism does not depend on $t$. 
Since $\mathbf{T}(\mu)$ is simple, this is in fact a scalar multiple of 
$e^{\lam}_{\mu}$; we will denote the corresponding scalar (a formal Laurent 
series in $T$) by $\chi^{\lam}_{\mu}$. The previous paragraph implies that the 
endomorphism $x^{\lam}_{\mu, T}$ is the lift of $x^{\lam}_{\mu, t}$ for $t 
\notin \{0, \pm 1, \pm 2, \ldots, \pm (r+s+1) \}$.

This immediately implies that $x^{\lam}_{\mu, t} = \chi^{\lam}_{\mu}$ for any 
$t$ as above.  

Applying this to the case $t=d\in \bZ_{>>0}$, we conclude that 
$$\chi^{\lam}_{\mu} = \begin{cases} ct(\mu^{\bullet} -\lam^{\bullet}) &\text{  
if  } \mu\in \lam + \InnaD{\young(\bullet)} \\ -ct(\lambda^{\circ} -\mu^{\circ} 
) -T   
&\text{  if  } \mu\in \lam - \square \\ 0 &\text{  else} \end{cases}$$

\end{proof}

\section{Categorical \texorpdfstring{$\sll_{\bZ}$-actions}{actions} on 
\texorpdfstring{$\mathcal{D}_t$}{Deligne 
categories}}\label{sec:categorical_sl_infty_actions_GL_t}
We will now define a categorical $\sll_{\bZ}$-action on the category 
$\mathcal{D}_t$.
\begin{definition}
 Let $a \in \bC$. We define the endofunctor $F_a$ of $\mathcal{D}_t$ so that 
$F_a(M)$ is the generalized $a$-eigenspace of $\rx_M$. 
\end{definition}
We have: $$F = \bigoplus_{a \in \bC} F_a$$
The endofunctor $E_a$ can be defined similarly as a direct summand of $E$; it is 
the left adjoint to $F_a$ and isomorphic to the right adjoint of $F_a$.

 From now and until the end of this section, assume that $t \notin \bZ$. The 
category $\mathcal{D}_t$ is then semisimple, and on simple objects, the actions 
of $E_a, F_a$ are given by Proposition \ref{prop:eigenvalues_x_Deligne}:
 \begin{align}\label{eq:F_a_action}
  &F_a(\mathbf{T}(\lambda)) = \bigoplus_{\mu\in \lam + \InnaD{\young(\bullet)}_a 
} 
\mathbf{T}(\mu) \oplus  \bigoplus_{\mu\in \lam - \square_{-\InnaC{(a+t)}} } 
\mathbf{T}(\mu)\\
 &E_a(\mathbf{T}(\lambda)) = \bigoplus_{\mu\in \lam - 
\InnaD{\young(\bullet)}_{a}} 
\mathbf{T}(\mu) \oplus \bigoplus_{\mu\in \lam + \square_{-\InnaC{(a+t)}}} 
\mathbf{T}(\mu)
 \end{align}

 One immediately observes that $F_a, E_a = 0$ when $a \notin \bZ \cup \bZ-t$.

 In this case, the union $\bZ \cup \bZ-t$ is disjoint, which implies that there 
are two separate copies of $\sll_{\bZ}$ acting on $\mathcal{D}_t$. Indeed, 
when $t \notin \bZ$, one can define two commuting natural transformations $\rx', 
\rx'' \in \End(F)$ such that $\rx = \rx' +\rx''$, and $\rx'$ has eigenvalues in 
$ \bZ$, while $\rx''$ has eigenvalues in $\bZ-t$. 
 
 The construction of Rouquier (see \cite{R}, \cite[7.4]{CR}) then gives an 
action of $\sll_{\bZ}\times \sll_{\bZ}$ on $\mathcal{D}_t$, where the first 
$\sll_{\bZ}$-copy corresponds to the data $(F, E, \rx', \tau)$, while the second 
$\sll_{\bZ}$-copy corresponds to the data $(F, E, \rx''+t\id, \tau)$. 
 
 In these two cases, we will denote the corresponding summands of $F, E$ by 
$F'_a, E'_a$, and $F''_a, E''_a$ respectively ($a \in \bZ$).
 Thus $F_a = F'_a, E_a = E'_a, F_{a-t} = F''_{a}, E_{a-t} = E''_{a}$ for any $a 
\in \bZ$. 
 
%  Let $t \notin \bZ$.
% % % We now consider an action of $\sll_{\bZ} \oplus \sll_{\bZ}$ on the tensor 
% % product $\mathfrak{F} \otimes \mathfrak{F}^{\vee}$: $\sll_{\bZ}$ will act only 
% % on the first factor, $\mathfrak{F}$, and $\sll_{\bZ}$ will act only on the 
% % second factor, $\mathfrak{F}^{\vee}$, via the isomorphism $$\sll_{\bZ} 
% % \rightarrow \sll_{\bZ}, a \mapsto -\InnaC{(a+t)}$$
Thus we obtain a description of the categorical action of $\sll_{\bZ} \oplus 
\sll_{\bZ}$ on the Grothendieck group of $\mathcal{D}_t$:
\begin{theorem}\label{thrm:action_Deligne_cat_ss}
 Let $t \notin \bZ$ (so $\mathcal{D}_t$ is semisimple). The functors $F'_a, 
E'_a, F''_a, E''_a$ for $a \in \bZ$ define an action of the Lie algebra 
$\sll_{\bZ} \oplus \sll_{\bZ}$ on the complexified Grothendieck group $\bC 
\otimes_{\bZ} Gr(\mathcal{D}_t)$. 
 
 As a $\sll_{\bZ} \oplus \sll_{\bZ}$-module, $\bC 
\otimes_{\bZ}Gr(\mathcal{D}_t)$ is isomorphic to the tensor product 
$\mathfrak{F} \otimes \mathfrak{F}^{\vee}$, with $\left[ \mathbf{T}(\lam) 
\right]$ corresponding to $v_{\lam^{\bullet}} \otimes v_{\lam^{\circ}}$.
\end{theorem}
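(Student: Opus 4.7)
The plan is to leverage the explicit spectral formulas~\eqref{eq:F_a_action}, which unpack Proposition~\ref{prop:eigenvalues_x_Deligne}. Since $t\notin\bZ$, the cosets $\bZ$ and $\bZ-t$ of $\bZ$ in $\bC$ are disjoint, so $F$ splits cleanly as $F^{(1)}\oplus F^{(2)}$ according to whether the generalized $\rx$-eigenvalue on a given summand lies in $\bZ$ or in $\bZ-t$. Setting $F'_a := F_a$ and $F''_a := F_{a-t}$ for $a\in\bZ$ (and analogously for $E$), one obtains $\rx',\rx''\in \End(F)$ by restricting $\rx$ to $F^{(1)}$ and $F^{(2)}$ and extending by zero on the complementary summand; they commute, $\rx=\rx'+\rx''$, and $\rx''+t\,\id$ has integer spectrum. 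The swap $\tau$ on $F^2$ permutes the two tensor factors without mixing the cosets of their eigenvalues, so the $\mathrm{dAHA}_d$-action of $(\rx,\tau)$ on $F^d$ restricts block-diagonally to the two complementary summands of $F^d$; each restriction yields, by Rouquier's construction (\cite{R}, \cite[Section~7.4]{CR}), a categorical $\sll_\bZ$-action, and the two actions commute.

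It then remains to identify the Grothendieck group as a module. The linear map $\Phi\colon\bC\otimes_\bZ Gr(\mathcal{D}_t)\to\mathfrak{F}\otimes\mathfrak{F}^\vee$ defined by $[\mathbf{T}(\lambda)]\mapsto v_{\lambda^\bullet}\otimes v_{\lambda^\circ}$ is a vector space isomorphism, since both sides have bases indexed by bipartitions. For $a\in\bZ$ the second summand of $F_a$ in~\eqref{eq:F_a_action} is empty (as $-(a+t)\notin\bZ$), so
\[
[F'_a\mathbf{T}(\lambda)]=\sum_{\mu\in\lambda+\blacksquare_a}[\mathbf{T}(\mu)],\qquad [E'_a\mathbf{T}(\lambda)]=\sum_{\mu\in\lambda-\blacksquare_a}[\mathbf{T}(\mu)].
\]
Similarly, for $F''_a=F_{a-t}$ the first summand in~\eqref{eq:F_a_action} vanishes (as $a-t\notin\bZ$), leaving
\[
[F''_a\mathbf{T}(\lambda)]=\sum_{\mu\in\lambda-\square_{-a}}[\mathbf{T}(\mu)],\qquad [E''_a\mathbf{T}(\lambda)]=\sum_{\mu\in\lambda+\square_{-a}}[\mathbf{T}(\mu)].
\]
Comparing with the Fock action in Section~\ref{ssec:Fock_space} and its twisted dual in Section~\ref{sssec:Fock_duals}, the map $\Phi$ intertwines $F'_a,E'_a$ with $f_a\otimes 1,e_a\otimes 1$ and $F''_a,E''_a$ with $1\otimes f_a,1\otimes e_a$. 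Since the $\sll_\bZ$-relations hold on each tensor factor of $\mathfrak{F}\otimes\mathfrak{F}^\vee$, this confirms both the $\sll_\bZ\oplus\sll_\bZ$-action at the Grothendieck level and the claimed module isomorphism in one stroke.

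The only substantive obstacle in the argument is the disjointness of the cosets $\bZ$ and $\bZ-t$: it is precisely what allows the single natural transformation $\rx$ to be cut into two commuting pieces $\rx',\rx''$ compatible with $\tau$, and what produces a direct sum of two Lie algebras rather than a single diagonal one. When $t\in\bZ$ this disjointness fails and the two summands of~\eqref{eq:F_a_action} genuinely interact, which is why the integer case requires the finer analysis of Theorem~\ref{introthrm:action_Deligne_cat}(2) rather than this direct product decomposition.
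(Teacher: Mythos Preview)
Your proposal is correct and follows essentially the same approach as the paper: the paper also splits $\rx$ into commuting pieces $\rx',\rx''$ using the disjointness of $\bZ$ and $\bZ-t$, invokes Rouquier's construction for the two resulting categorical $\sll_\bZ$-actions, and reads off the Grothendieck group action from the explicit formulas~\eqref{eq:F_a_action}. Your write-up is somewhat more explicit about the intertwining map $\Phi$ and the verification against the Fock space formulas, but the argument is the same.
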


\section{Categorical \texorpdfstring{$\sll_{\bZ}$-actions}{actions} on 
\texorpdfstring{$\cV$}{V} for integer 
\texorpdfstring{$t$}{t}}\label{sec:actions_on_V}
Let $t \in \bZ$. Consider the $\sll_{\bZ}$-action on the category $\cV$, induced 
by the $\sll_{\bZ}$-action described in Section \ref{sec:skeletal_action}.

Recall that the category $\cV$, defined in Section \ref{sssec:abelian_Deligne}, 
is essentially ``glued'' from pieces of categories $Rep(\gl(m|n))$ ($m-n =t$) 
using the functors $\mathcal{F}_{m|n}: \cV \to Rep(\gl(m|n))$, $V_t \mapsto 
\bC^{m|n}$; these provide ``local'' equivalences, which allows us to reduce the 
study of the $\sll_{\bZ}$-action on $\cV$ to ``local'' studies in 
$Rep(\gl(m|n))$ for $m, n>>0$. The functors $\mathcal{F}_{m|n}$ are equivariant 
(a direct consequence of \ref{lem:equivar_skeletal_act}). 

The term ``local equivalences'' means the following: the categories $\cV$ and 
$Rep(\gl(m|n))$ have $\bZ_+$-filtrations, so that $$\mathcal{V}_t^k \cong 
Rep^k(\gl(m|n)) \cong Rep^k(\gl(m-1|n-1))$$ for $m, n >> k$. The subcategories 
$\mathcal{V}_t^k$ (resp. $Rep^k(\gl(m|n))$) are defined as full subcategories of 
$\cV$ (resp. $Rep(\gl(m|n))$) containing all the subquotients of finite direct 
sums of mixed tensor powers $V_t^{\otimes r} \otimes V_t^{* \otimes s}$, $r +s 
\leq k$ (resp. $\left(\bC^{m|n} \right)^{\otimes r} \otimes {\left(\bC^{m|n} 
\right)^*}^{\otimes s}$).

\begin{remark}
 These subcategories are not preserved by the functors $F, E$.
\end{remark}

Using this fact we will now compute the action of functors $F_a \in \End(\cV)$ 
on the standard objects $\mathbf{V}(\lam)$.

% % %This means that the action of $\sll_{\bZ}$ on $Gr(\cV)$ is "locally" given by 
% % its action on the appropriate elements in $Gr(Rep(\gl(m|n)))$. In particular, 
% % this implies that $F_i=0$ unless $i\in \bZ$ (since this is the case in 
% % $Rep(\gl(m|n))$).
\subsection{Action on standard objects}\label{ssec:action_F_a_standard}
We now compute the action of $F_a$ on a standard object $\mathbf{V}(\lam) \in 
\cV$. The following proposition is valid for any $t \in \bC$.

\begin{proposition}\label{prop:action_standards}
 The object $F_a(\mathbf{V}(\lam))$ in $\cV$ is standardly-filtered:
 $$ 0 \to \mathbf{V}(\lam + \InnaD{\young(\bullet)}_a) \longrightarrow 
F_a(\mathbf{V}(\lam)) \longrightarrow \mathbf{V}(\lam - 
\square_{-\InnaC{(a+t)}}) \to 0. $$
% %  %and the multiplicity of $\mathbf{V}(\mu)$ in this filtration is 
% % $\tilde{A}^{\lam}_{a, \mu} (t)$.
\end{proposition}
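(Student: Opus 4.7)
The plan is to reduce to the category $Rep(\gl(m|n))$ via the local equivalence. Fix $k := |\lam| + 1$ and choose $m, n \gg k$ with $m - n = t$. The functor $\mathcal{F}_{m|n}: \mathcal{V}_t^k \xrightarrow{\sim} Rep^k(\gl(m|n))$ is an SM equivalence that respects $V_t \mapsto \bC^{m|n}$, hence is equivariant for the type A action by Lemma \ref{lem:equivar_skeletal_act} and commutes with $F_a$. It therefore suffices to establish the analogous short exact sequence for $\mathit{V}(\lam) = \mathcal{F}_{m|n}(\mathbf{V}(\lam))$ inside $Rep^k(\gl(m|n))$. By Lemma \ref{lem:V_lam_highest_weight}, $\mathit{V}(\lam)$ is the maximal-in-$Rep^k$ image of the Kac module $\mathit{K}(\lam) = \mathrm{Ind}_{\mathfrak{p}^+}^{\gl(m|n)} L^0(\vec{\lam})$, where $\mathfrak{p}^+ = \gl(m) \oplus \gl(n) \oplus \mathfrak{g}_{\bar 1}^+$; so I would first analyze $\rx$-eigenspaces in $\bC^{m|n} \otimes \mathit{K}(\lam)$ and then descend.

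For this I would apply the tensor identity $\bC^{m|n} \otimes \mathit{K}(\lam) = \mathrm{Ind}_{\mathfrak{p}^+}^{\gl(m|n)}(\bC^{m|n}|_{\mathfrak{p}^+} \otimes L^0(\vec{\lam}))$, combined with the $\mathfrak{p}^+$-module short exact sequence $0 \to \bC^m \to \bC^{m|n} \to \bC^n \to 0$ (since the odd positive roots send the odd weights to the even ones, making $\bC^m$ a $\mathfrak{p}^+$-submodule). Exactness of induction then produces a Kac filtration of $\bC^{m|n} \otimes \mathit{K}(\lam)$ whose bottom piece $\mathrm{Ind}(\bC^m \otimes L^0(\vec{\lam}))$ has subquotients $\mathit{K}(\vec{\lam} + \delta_i)$ for $i \leq m$ (corresponding, via $\vec{\lam} + \delta_i$, to $\lam + \blacksquare$) and whose top piece has subquotients $\mathit{K}(\vec{\lam} + \delta_i)$ for $i > m$, where $\delta_{m+j}$ shifts $b_j$ by $1$, i.e.\ removes a box from $\lam^\circ$ (the $\lam - \square$ contribution).

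Each indecomposable Kac subquotient $\mathit{K}(\vec{\lam} + \delta_i)$ has $\End = \bC$, so $\rx$ acts on it by a scalar. This scalar is computed from the quadratic Casimir exactly as in the proof of Proposition \ref{prop:eigenvalues_x_Deligne} (whose derivation is insensitive to whether $t \in \bZ$): it equals the content of the added box for $i \leq m$, and equals $-(\text{content of the removed box}) - t$ for $i > m$ (the shift by $t = m - n$ appears through $\rho$). Taking the generalized $a$-eigenspace of $\rx$ on the Kac filtration therefore picks out at most one Kac module of each type — $\mathit{K}(\lam + \blacksquare_a)$ (bottom) and $\mathit{K}(\lam - \square_{-(a+t)})$ (top) — producing the claimed extension. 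Passing to $\mathit{V}$'s and pulling back through $\mathcal{F}_{m|n}$ then yields the desired SES in $\cV$. The main obstacle I anticipate is controlling the descent from the Kac filtration of $\mathit{K}(\lam)$ to the standard filtration of $\mathit{V}(\lam)$ — one must ensure that the truncation to $Rep^k$ neither destroys the ordering nor merges the two terms — and this is handled by choosing $m, n$ large enough that the three bipartitions $\lam, \lam + \blacksquare_a, \lam - \square_{-(a+t)}$ all lie inside the equivalence range $\mathcal{V}_t^{k+1} \cong Rep^{k+1}(\gl(m|n))$.
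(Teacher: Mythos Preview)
Your approach is essentially the same as the paper's: reduce via the local equivalence $\mathcal{F}_{m|n}$ to $Rep(\gl(m|n))$, compute the Kac filtration of $F(\mathit{K}(\lam))$ via the tensor identity and the $\mathfrak{p}^+$-filtration $0 \to \bC^m \to \bC^{m|n} \to \bC^n \to 0$, and then read off the $\rx$-eigenvalues on each Kac subquotient to isolate $F_a$. The paper cites \cite{BLW} for that eigenvalue computation where you redo it via the Casimir, but the content is identical.

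The one place where the paper is tighter than your proposal is exactly the step you flag as the main obstacle. You try to descend directly from the Kac filtration of $F_a(\mathit{K}(\lam))$ to a standard filtration of $F_a(\mathit{V}(\lam))$, and ``choosing $m,n$ large enough'' does not by itself justify that passage: the kernel of $\mathit{K}(\lam) \twoheadrightarrow \mathit{V}(\lam)$ is not killed by $F_a$, and a quotient of a Kac-filtered module need not be $\mathit{V}$-filtered. The paper sidesteps this entirely by first invoking \cite[Corollary 3.3.3]{EHS} (equation \eqref{eq:F_on_standard} in the paper), which says directly in $\cV$ that $F(\mathbf{V}(\lam))$ is standardly filtered with subquotients $\mathbf{V}(\mu)$, $\mu \in \lam + \blacksquare \sqcup \lam - \square$, and with the $\lam + \blacksquare$ piece as a subobject. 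Given this, each summand $F_a(\mathbf{V}(\lam))$ is automatically standardly filtered, and the Kac-module computation is used only to decide \emph{which} $\mu$'s land in the $a$-eigenspace (via the surjection $F_a(\mathit{K}(\lam)) \twoheadrightarrow F_a(\mathit{V}(\lam))$ and comparison of highest weights). If you insert that citation at the start, your descent obstacle disappears and the argument is complete.
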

\begin{proof}
If $t\notin \bZ$, then $\mathbf{V}(\lam) = \mathbf{T}(\lam)$, and the statement 
is true (see Section \ref{sec:categorical_sl_infty_actions_GL_t}), so we will 
assume that $t \in \bZ$.

First, recall from \cite[Corollary 6.2.2]{EHS} that 
\begin{equation}\label{eq:F_on_standard}
 0 \to \bigoplus_{\mu \in \lam + \InnaD{\young(\bullet)}} \mathbf{V}(\mu) 
\longrightarrow 
F(\mathbf{V}(\lam)) \longrightarrow \bigoplus_{\mu \in \lam - \square} 
\mathbf{V}(\mu) \to 0
\end{equation}

Therefore $F_a(\mathbf{V}(\lam))$ is filtered by standard objects for every $a 
\in \bC$. 

To find out which $\mathbf{V}(\mu)$ appear in $F_a(\mathbf{V}(\lam))$, let $m >> 
0$ and let $n:=m-t$. Denote by $W:= \bC^{m|n} $ the tautological representation 
of $\gl(m|n)$. Its even and odd parts will be denoted $W_0, W_1$.

Recall from Lemma \ref{lem:V_lam_highest_weight} that $\mathit{V}(\lam) := 
\mathcal{F}_{m|n}(\mathbf{V}(\lam))$ is a highest-weight $\gl(m|n)$-module, the 
image of the homomorphism from the Kac module $\mathit{K}(\lam)$ to 
$S^{\lam^{\bullet}}W \otimes S^{\lam^{\circ}}W^*$.
% % % %moreover, by \cite[Proposition 3.4.2]{EHS}, the Jordan-Holder components of 
% % % $\mathit{K}(\lam)$ which are not in $\mathit{V}(\lam)$ have so-called negative 
% % % highest weights, and do not lie in. 
 
Now, the action of the endofunctor $F_a \in \End(Rep(\gl(m|n)))$ on the Kac 
module $\mathit{K}(\lam)$ can be described very explicitly, as was done in 
\cite{BLW}.
 
 Consider the decomposition of the Lie superalgebra $$\gl(m|n) = 
\mathfrak{g}_{-1} \oplus \mathfrak{g}_{0} \oplus \mathfrak{g}_1$$ where 
$\mathfrak{g}_{0}$ is even part of the superspace $\gl(m|n)$, and 
$\mathfrak{g}_{1} \cong V_0 \otimes (V_1)^*$ (so $\mathfrak{g}_{-1} \oplus 
\mathfrak{g}_1$ is the odd part of $\gl(m|n)$).
 Then 
 \begin{align*}
 &F(\mathit{K}(\lam)) = U\gl(m|n) \otimes_{U(\mathfrak{g}_{0} \oplus 
\mathfrak{g}_1)} \left( \mathit{L}_{\mathfrak{g}_0} (\lambda^{\bullet}) \otimes 
\mathit{L}_{\mathfrak{g}_1} (\lambda^{\circ}) \right) \otimes W =\\
 &= U\gl(m|n) \otimes_{U(\mathfrak{g}_{0} \oplus \mathfrak{g}_1)} \left( 
\mathit{L}_{\mathfrak{g}_0} (\lambda^{\bullet}) \otimes 
\mathit{L}_{\mathfrak{g}_1} (\lambda^{\circ})  \otimes W \right) 
 \end{align*}

Since $W$ has a $\mathfrak{g}_{0} \oplus \mathfrak{g}_1$-filtration with 
subquotients $W_0, W_1$, the $\gl(m|n)$-module $F(\mathit{K}(\lam))$ has a 
filtration by Kac modules with highest weights lying in the set $\{\vec{\lam} + 
\delta_j \rvert j = 1,\ldots,m+n\}$. 

In other words, $F(\mathit{K}(\lam))$ has a filtration by Kac modules 
$\mathit{K}(\mu)$, where $\mu \in \lam + \InnaD{\young(\bullet)} \sqcup \lam - 
\square$.

Thus $F_a(\mathit{K}(\lam))$ has a filtration by Kac modules. It was computed 
in 
\cite{BLW} that the Kac modules $\mathit{K}(\mu)$ appearing in 
$F_a(\mathit{K}(\lam))$ have $\vec{\mu} = \vec{\lam} + \delta_j$ where 
$\vec{\lam}_j - j +1 =  a$ if $1 \leq j \leq m$, and $- \vec{\lam}_j + j -2m =  
a+1$. Translating these into the language of bipartitions, we obtain: $\mu \in 
\lam + \InnaD{\young(\bullet)}_a \sqcup \lam - \square_{-\InnaC{(a+t)}}$, and 
hence 
$F_a(\mathit{V}(\lam))$ has a filtration by $\mathit{V}(\mu)$ for the same 
bipartitions $\mu$. The filtration \eqref{eq:F_on_standard} now implies the 
required result.

\end{proof}

\begin{remark}
 One can similarly show that the object $E_a(\mathbf{V}(\lam)) \subset V_t^* 
\otimes \mathbf{V}(\lam)$ is standardly-filtered:
 $$ 0 \to \mathbf{V}(\lam + \square_{-\InnaC{(a+t)}}) \longrightarrow 
E_a(\mathbf{V}(\lam)) \longrightarrow \mathbf{V}(\lam - 
\InnaD{\young(\bullet)}_{a}) \to 0 
$$
\end{remark}

\subsection{Grothendieck group of \texorpdfstring{$\cV$}{V} in case of integer 
\texorpdfstring{$t$}{t}}\label{ssec:groth_group}

Once again, let $t \in \bZ$, and recall the definition of the shifted 
representation $\mathfrak{F}^{\vee}_t$ of $\sll_{\bZ}$ (see Section 
\ref{sssec:Fock_duals}).

The tensor product $\mathfrak{F} \otimes \mathfrak{F}^{\vee}_t$ is again an 
$\sll_{\bZ}$-module, and Proposition \ref{prop:action_standards} implies:
\begin{theorem}\label{thrm:action_Deligne_cat_non_ss}
 Let $t \in \bZ$. The functors $F_a, E_a$ for $a \in \bZ$ define an action of 
the Lie algebra $\sll_{\bZ}$ on the complexified Grothendieck group $\bC 
\otimes_{\bZ} Gr(\cV)$. 
 
 We then have an isomorphism of $\sll_{\bZ}$-modules $$\bC \otimes_{\bZ} Gr(\cV) 
\stackrel{\sim{}}{\longrightarrow} \mathfrak{F} \otimes \mathfrak{F}^{\vee}_t, 
\; \; \left[ \mathbf{V}(\lam) \right] \mapsto v_{\lam^{\bullet}} \otimes 
v_{\lam^{\circ}}$$
\end{theorem}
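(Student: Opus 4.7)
The strategy is to transport the $\sll_{\bZ}$-relations from the codomain $\mathfrak{F} \otimes \mathfrak{F}^{\vee}_t$ back to $\bC \otimes_{\bZ} Gr(\cV)$ via a bijection of bases, using Proposition~\ref{prop:action_standards} to match the Chevalley generators on both sides.

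First I would invoke Proposition~\ref{prop:eigenval_V_t_integral} to observe that for $t \in \bZ$ the generalized eigenvalues of $\rx$ are integers, so the decompositions $F = \bigoplus_{a \in \bZ} F_a$ and $E = \bigoplus_{a \in \bZ} E_a$ are well-defined and each $F_a,E_a$ is an endofunctor of $\cV$. Next I would argue that $\{[\mathbf{V}(\lam)]\}_{\lam}$ is a $\bC$-basis of $\bC \otimes_{\bZ} Gr(\cV)$: in each $\mathcal{V}_t^k$ the standards are related to the simples unitriangularly (by the local highest-weight structure recalled in Section~\ref{sssec:abelian_Deligne}), and since $\cV = \bigcup_k \mathcal{V}_t^k$ the classes $[\mathbf{V}(\lam)]$ span and are linearly independent. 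Define then the $\bC$-linear isomorphism
$$\Phi: \bC \otimes_{\bZ} Gr(\cV) \longrightarrow \mathfrak{F} \otimes \mathfrak{F}^{\vee}_t, \qquad [\mathbf{V}(\lam)] \longmapsto v_{\lam^{\bullet}} \otimes v_{\lam^{\circ}}.$$

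The second step is to check $\Phi$-compatibility of the $f_a, e_a$ action. Passing to the Grothendieck group in the short exact sequence of Proposition~\ref{prop:action_standards} yields
$$[F_a \mathbf{V}(\lam)] = [\mathbf{V}(\lam + \blacksquare_a)] + [\mathbf{V}(\lam - \square_{-(a+t)})],$$
whose image under $\Phi$ is $v_{\lam^{\bullet}+\square_a} \otimes v_{\lam^{\circ}} + v_{\lam^{\bullet}} \otimes v_{\lam^{\circ} - \square_{-(a+t)}}$. By the definitions of $\mathfrak{F}$ and $\mathfrak{F}^{\vee}_t$ in Section~\ref{sssec:Fock_duals}, this is precisely $f_a(v_{\lam^{\bullet}} \otimes v_{\lam^{\circ}})$ computed by the Leibniz rule on the tensor product. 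The same calculation for $E_a$, using the remark following Proposition~\ref{prop:action_standards}, matches $e_a \Phi([\mathbf{V}(\lam)])$. Since $\mathfrak{F} \otimes \mathfrak{F}^{\vee}_t$ is an $\sll_{\bZ}$-module and $\Phi$ intertwines $[F_a], [E_a]$ with $f_a, e_a$ on a basis, the operators $[F_a], [E_a]$ automatically inherit the defining $\sll_{\bZ}$-relations, which simultaneously establishes the categorical action and the fact that $\Phi$ is an isomorphism of $\sll_{\bZ}$-modules.

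The main obstacle I anticipate is purely combinatorial bookkeeping: one must track carefully the convention for adding or removing a box to $\lam^{\bullet}$ versus $\lam^{\circ}$, the sign flip $a \mapsto -a$ in passing to $\mathfrak{F}^{\vee}$, and the shift by $t$ in passing further to $\mathfrak{F}^{\vee}_t$. The matching between ``remove a box of content $-(a+t)$ from $\lam^{\circ}$'' on the categorical side and the action of $f_a$ on $\mathfrak{F}^{\vee}_t$ is the only identification requiring attention, and it is dictated by the eigenvalue computation in Proposition~\ref{prop:eigenvalues_x_Deligne} transferred to $\cV$ via $\mathcal{F}_{m|n}$ in the proof of Proposition~\ref{prop:action_standards}. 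Everything else is formal.
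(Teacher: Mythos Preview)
Your proposal is correct and follows exactly the approach the paper takes: the paper simply states that Proposition~\ref{prop:action_standards} implies the theorem, and you have spelled out precisely what that implication entails (the standards give a basis by the local highest-weight structure, and the short exact sequences for $F_a, E_a$ on standards match the Leibniz-rule action of $f_a, e_a$ on $\mathfrak{F} \otimes \mathfrak{F}^{\vee}_t$). There is nothing more to it.
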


\begin{remark}
 The Grothendieck group of the full subcategory $\mathcal{V}_t^k \subset \cV$ 
corresponds to the subspace $$ \varinjlim_{r ,\; s: \; r+s \leq k} 
\mathfrak{F}_{(r)}\otimes \mathfrak{F}^{\vee}_{(s)} \subset \mathfrak{F} \otimes 
\mathfrak{F}^{\vee}$$ (see Secion \ref{sssec:Fock_inv_lim} for definition of 
subspace $\mathfrak{F}_{(r)} \subset \mathfrak{F}$; the subspace 
$\mathfrak{F}^{\vee}_{(s)} \subset \mathfrak{F}^{\vee}$ is defined 
analoguously). 
 
 The functor $\mathcal{F}_{m|n}:\cV \to Rep(\gl(m|n))$ is not exact, so it does 
not correspond to a map between $\mathfrak{F} \otimes \mathfrak{F}^{\vee}$ and 
$\wedge^m \bC^{\bZ} \otimes \wedge^n (\bC^{\bZ})^*$. Yet the equivalence 
$\mathcal{F}_{m|n}:\mathcal{V}_t^k \longrightarrow Rep^k(\gl(m|n))$ for $m, 
n>>0$ can be seen as a categorical version of the statement $$\varinjlim_{r ,\; 
s: \; r+s \leq k} \mathfrak{F}_{(r)}\otimes \mathfrak{F}^{\vee}_{(s)} \cong 
\varinjlim_{r ,\; s: \; r+s \leq k} \wedge^m \bC^{\bZ}_{(r)} \otimes \wedge^n 
\bC^{ \bZ}_{*, (s)}$$
 for $m, n>>k$.
% % %  Theorem \ref{thrm:action_Deligne_cat_non_ss} is then a categorical version of 
% % the isomorphism (cf. \eqref{eq:Fock_limit})
% % %  \begin{equation}
% % %    \mathfrak{F}\otimes \mathfrak{F}^{\vee} \cong \varinjlim_{k \to \infty} 
% % \varprojlim_{m, n \to \infty} \wedge^m \bC^{\bZ}_{(k)}    \wedge^n 
% % \bC^{\bZ}_{(k)}  \end{equation}

\end{remark}

% % % % {eq:Fock_limit}
% % % %    \mathfrak{F} \cong \varinjlim_{k \to \infty} \varprojlim_{n \to \infty} 
% % % \wedge^n \bC^{\bZ}_{(k)}  
% % % % \begin{proof}
% % % % Consider the functors $H_{m, n}: Rep(\gl(m|n))\longrightarrow 
% % % Rep(GL(m-1|n-1))$. These functors are SM, and thus are equivariant with respect 
% % % to the $\sll_{\bZ}$-actions.
% % % % 
% % % % This defines a skeletal action of $\sll_{\bZ}$ on the inverse limit 
% % % $\varprojlim_{m\to \infty} Rep(GL(m|m-t))$, such that the projection functors 
% % % $\varprojlim_{m\to \infty} Rep(GL(m|m-t)) \longrightarrow \varprojlim_{m\to 
% % % \infty} Rep(GL(m|m-t))$ are equivariant. In particular, the endfunctors $F, E$ 
% % % of $\varprojlim_{m\to \infty} Rep(GL(m|m-t))$ are simply tensor multiplication 
% % % by $(\bC^{m|m-t})_{m\geq t, 0}$ and its dual $(\bC^{m|m-t, *})_{m\geq t, 0}$. 
% % % % 
% % % % 
% % % % Next, we claim that this skeletal action preserves the full subcategory $\cV$ 
% % % of $\varprojlim_{m\to \infty} Rep(GL(m|m-t))$. Indeed, in order to show this, we 
% % % only need to check that the endfunctors $F, E$ on $\varprojlim_{m\to \infty} 
% % % Rep(GL(m|m-t))$ preserve $\cV$. But this is obvious, since $\cV$ is a monoidal 
% % % subcategory $\cV$ of containing $(\bC^{m|m-t})_{m\geq t, 0}$ and its dual 
% % % $(\bC^{m|m-t, *})_{m\geq t, 0}$.
% % % % \end{proof}
\subsection{Action on tilting objects}\label{ssec:action_F_a_tilting}

In this section we give some results on the action of the functors $F_a$ on 
objects $\mathbf{T}(\lam)$ in $\cV$ for $t \in \bZ$. 
\begin{notation}
For any $t \in \bC$, denote by $\tilde{A}_a(t)$ the matrix whose $(\lam, \mu)$ 
entry is $1$ iff $\mu\in \lam + \InnaD{\young(\bullet)}_a \bigsqcup \lam - 
\square_{-\InnaC{(a+t)}}$, and zero otherwise.
\end{notation}

Let $a, t \in \bC$. Recall that $F_a$ are exact endofunctors of the category 
$\cV$ which preserve the subcategory $\mathcal{D}_t$, so each 
$F_a(\mathbf{T}(\lam))$ decomposes as a direct sum of $\mathbf{T}(\mu)$.

In general, the formula \eqref{eq:F_a_action} does not hold. The best 
approximation is given by the next proposition.
Let $\lam, \mu$ be two bipartitions. Denote by $A^{\lam}_{a, \mu}(t)$ the 
multiplicity of $\mathbf{T}(\mu)$ in $F_a(\mathbf{T}(\lam))$. We will denote by 
$A_a(t)$ the matrix whose entries are $A^{\lam}_{a, \mu}(t)$ (the entries are 
indexed by pairs of bipartitions).

Recall that in Section \ref{sec:categorical_sl_infty_actions_GL_t} we showed 
that for $t \notin \bZ$, $A^{\lam}_{a, \mu}(t)$ is either $1$ or $0$, and it 
equals $1$ iff $\mu\in \lam + \InnaD{\young(\bullet)}_a \bigsqcup \lam - 
\square_{-\InnaC{(a+t)}}$.

\begin{proposition}
 Let $t\in \bZ$. Then $$A_a(t) \, = \, D(t)\, \tilde{A}_a(t) \,D(t)^{-1}$$ where 
$D(t)$ is the matrix whose entries are $D^{\lam}_{\mu}(t)$ (cf. Section 
\ref{sec:mult}).
\end{proposition}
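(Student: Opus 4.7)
The plan is to realize both $A_a(t)$ and $\tilde{A}_a(t)$ as the matrix of one and the same operator $F_a$ on $K_0(\cV)$, expressed in two different bases, with $D(t)$ serving as the change-of-basis matrix between them. Under this interpretation, the claimed identity is nothing more than the familiar linear-algebra relation for the matrix of an operator under a change of basis.

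First, I would apply Proposition~\ref{prop:action_standards} to pass to $K_0(\cV)$. Since $F_a$ is exact on $\cV$, the short exact sequence given there yields
\[
[F_a(\mathbf{V}(\lam))] \;=\; [\mathbf{V}(\lam+\blacksquare_a)] + [\mathbf{V}(\lam-\square_{-(a+t)})],
\]
with the convention that a term is zero if the corresponding set of bipartitions is empty. Comparing with the definition of $\tilde{A}_a(t)$, this is precisely the statement that the operator $F_a$, expressed in the basis $\{[\mathbf{V}(\mu)]\}_\mu$ of standard classes, has matrix $\tilde{A}_a(t)$.

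Next, I would invoke Theorem~\ref{thrm:equiv_multip}: the multiplicities $(\mathbf{T}(\lam):\mathbf{V}(\mu))_\cV$ coincide with $D^{\lam}_{\mu}(t)$, so that in $K_0(\cV)$ we have $[\mathbf{T}(\lam)] = \sum_\mu D^{\lam}_{\mu}(t)\,[\mathbf{V}(\mu)]$. Thus $D(t)$ is exactly the transition matrix from the tilting basis $\{[\mathbf{T}(\lam)]\}_\lam$ to the standard basis $\{[\mathbf{V}(\mu)]\}_\mu$. Invertibility of $D(t)$ was already established inside the proof of Theorem~\ref{thrm:equiv_multip}, where $D(t)$ was shown to be unitriangular with respect to the ordering of bipartitions by total size.

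With these two ingredients, the remainder is a routine change-of-basis computation: evaluate $[F_a(\mathbf{T}(\lam))]$ in two ways, either by first applying the matrix $A_a(t)$ in the tilting basis and then expanding each $[\mathbf{T}(\nu)]$ via $D(t)$, or by first expanding $[\mathbf{T}(\lam)]$ in the standard basis via $D(t)$ and then applying $\tilde{A}_a(t)$. Comparing the coefficient of $[\mathbf{V}(\rho)]$ for every $\rho$ gives $A_a(t)\,D(t) = D(t)\,\tilde{A}_a(t)$, and the claim follows by inverting $D(t)$. I do not anticipate any genuine obstacle: the substantive work is contained in Proposition~\ref{prop:action_standards} (which cleanly describes $F_a$ on standards even when $\cV$ is not semisimple) and Theorem~\ref{thrm:equiv_multip} (which identifies the tilting-to-standard transition matrix with $D(t)$); the proposition at hand is essentially a formal corollary of them.
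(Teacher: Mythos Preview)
Your proposal is correct and follows essentially the same approach as the paper: both arguments compute $[F_a(\mathbf{T}(\lam))]$ in the Grothendieck group in two ways, using Theorem~\ref{thrm:equiv_multip} to pass between the tilting and standard bases via $D(t)$ and Proposition~\ref{prop:action_standards} to identify the matrix of $F_a$ in the standard basis with $\tilde{A}_a(t)$, then conclude by a change-of-basis. Your write-up is in fact somewhat cleaner than the paper's, which phrases the computation as ``multiplying by $[V_t]$'' while tacitly projecting to the $a$-eigenspace.
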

This agrees with the fact that $A_a(t) = \tilde{A}_a(t)$ when $t \notin \bZ$, 
since in this case $D(t) = \id$.
\begin{proof}
 Recall from Theorem \ref{thrm:equiv_multip} that in the Grothendieck ring of 
$\cV$, we have $$\left[\mathbf{T}(\lam) \right] = \sum_{\mu} 
D^{\lam}_{\mu}(t)\left[\mathbf{V}(\mu)\right]$$ Multiplying both sides by 
$[V_t]$, we have:
 
 $$\left[\mathbf{T}(\lam) \otimes V_t\right]= \sum_{\mu} 
D^{\lam}_{\mu}(t)\left[\mathbf{V}(\mu) \otimes V_t\right] = \sum_{\mu, \mu'} 
D^{\lam}_{\mu}(t) \tilde{A}^{\mu}_{\mu', a}(t) \left[\mathbf{V}(\mu')\right] = 
\sum_{\mu, \mu'} D^{\lam}_{\mu}(t) \tilde{A}^{\mu}_{\mu', a}(t) 
\left[\mathbf{T}(\mu')\right] $$
 
 Yet by definition $$\left[\mathbf{T}(\lam) \otimes V_t\right]= 
\sum_{\mu}A^{\lam}_{\mu, a}(t) \left[\mathbf{T}(\mu)\right] $$ 

 Hence $A_a(t) \, = \, D(t)\, \tilde{A}_a(t) \,D(t)^{-1}$.
\end{proof}

Although computing explicitly the matrix $A_a(t)$ is difficult, one can show 
that part of it is identical to the entries in $\tilde{A}_a(t)$:

\begin{corollary}
 For any $\lam$, we have:
 $$ F_a(\mathbf{T}(\lambda)) = \bigoplus_{\mu\in \lam + 
\InnaD{\young(\bullet)}_a } 
\mathbf{T}(\mu) \oplus  \bigoplus_{\mu } \mathbf{T}(\mu)^{\oplus {A}^{\lam}_{a, 
\mu} (t)}.$$
% % % %  and similarly 
% % % %   $$ E_a(\mathbf{T}(\lambda)) = \bigoplus_{\mu\in \lam - \blacksquare_a } 
% % % \mathbf{T}(\mu) \oplus  \bigoplus_{\mu } \mathbf{T}(\mu)^{\oplus {A}^{\mu}_{a, 
% % % \lam} (t)}$$
\end{corollary}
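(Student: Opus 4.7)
The plan is to use the matrix identity $A_a(t) = D(t)\,\tilde A_a(t)\, D(t)^{-1}$ established in the preceding proposition. Rewriting it as $A_a(t)\, D(t) = D(t)\, \tilde A_a(t)$ and extracting the $(\lambda,\mu)$-entry for $\mu \in \lambda + \blacksquare_a$, the right-hand side evaluates to $1 + D^{\lambda}_{\mu'}(t)$, where $\mu' := (\mu^{\bullet},\, \mu^{\circ} + \square_{-(a+t)})$ (with $D^{\lambda}_{\mu'}(t) := 0$ if this addition is undefined); the two contributing choices of $\kappa$ on the right are $\kappa = \lambda$ (paired with $\mu$ via $\mu \in \lambda + \blacksquare_a$) and $\kappa = \mu'$ (paired with $\mu$ via $\mu \in \mu' - \square_{-(a+t)}$). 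On the left-hand side, since $D(t)$ is unipotent triangular with respect to the cap-diagram partial order on bipartitions (with $D^{\mu}_{\mu}(t) = 1$), the sum splits as $A^{\lambda}_{a,\mu}(t) + \sum_{\rho > \mu,\ D^{\rho}_{\mu}(t) = 1} A^{\lambda}_{a,\rho}(t)$.

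Thus the corollary is equivalent to the cancellation identity
\[
\sum_{\rho > \mu,\ D^{\rho}_{\mu}(t) = 1} A^{\lambda}_{a,\rho}(t) \;=\; D^{\lambda}_{\mu'}(t),
\]
which I would prove by induction on the cap-diagram partial order. In the base case $\mu$ is maximal in its block, so the left-hand side is empty, and a direct weight-diagram argument using Section \ref{sec:weight_diag} forces $D^{\lambda}_{\mu'}(t) = 0$. For the inductive step, the key combinatorial observation is that among $\rho > \mu$ with $D^{\rho}_{\mu}(t) = 1$, the only one which can contribute nontrivially (i.e. with $A^{\lambda}_{a,\rho}(t) \neq 0$) is $\rho = \mu'$, and the inductive hypothesis applied at $\rho = \mu'$ yields exactly $A^{\lambda}_{a,\mu'}(t) = D^{\lambda}_{\mu'}(t)$.

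The main obstacle is this combinatorial input: one must relate the cross-moves in cap diagrams that characterise $D^{\rho}_{\mu}(t) = 1$ (via Theorem \ref{thrm:equiv_multip}) with the operation $\mu \mapsto \mu'$ of adding a box of content $-(a+t)$ to $\mu^{\circ}$, showing that this passage sits as the unique ``minimal'' such cross-move compatible with the image of $F_a$. A cleaner alternative is to lift to the formal-parameter Deligne category $\mathcal{D}_T$ using $\mathrm{lift}_t$ of Section \ref{sssec:formal_param}: there $F_a\,\mathbf{T}(\lambda) = \bigoplus_{\mu \in \lambda + \blacksquare_a} \mathbf{T}(\mu)$ with each summand appearing exactly once (the $-\square_{-(a+T)}$ contribution being absent at formal parameter), and descending this decomposition to $\mathcal{D}_t$ through the invertibility of $D(t)$ pins down $A^{\lambda}_{a,\mu}(t) = 1$ for $\mu \in \lambda + \blacksquare_a$ without recourse to the induction.
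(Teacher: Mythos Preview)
Your setup is correct: rewriting $A_a(t)D(t)=D(t)\tilde A_a(t)$ and extracting the $(\lambda,\mu)$-entry for $\mu\in\lambda+\blacksquare_a$ does give
\[
A^{\lambda}_{a,\mu}(t)+\sum_{\rho\neq\mu,\;D^{\rho}_{\mu}(t)=1}A^{\lambda}_{a,\rho}(t)=1+D^{\lambda}_{\mu'}(t).
\]
But from here you take a detour that never closes. The induction you propose on the cap-diagram order is circular: your ``key combinatorial observation'' that only $\rho=\mu'$ can contribute to the left sum presupposes control of $A^{\lambda}_{a,\rho}(t)$ for $\rho\neq\mu$, which is precisely what you are trying to establish. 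And your alternative via $\mathrm{lift}_t$ does not make sense as stated: $\mathrm{lift}_t$ is a map of split Grothendieck groups, not of categories, and the decomposition $F=\bigoplus_a F_a$ at the formal parameter $T$ is genuinely different from the one at the integer $t$ (the contents $a$ and $-(a+t)$ coalesce only at integer $t$), so there is no reason $\mathrm{lift}_t$ should intertwine the two functors called $F_a$.

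The paper's argument avoids all of this by working with the coarser ordering by total size $\abs{\lambda}=\abs{\lambda^\bullet}+\abs{\lambda^\circ}$ rather than the cap-diagram order. The point you are missing is that $D^{\rho}_{\mu}(t)\neq 0$ with $\rho\neq\mu$ forces $\abs{\rho}>\abs{\mu}$ \emph{strictly} (each cross-move from position $s$ to $s'>s$ increases both $\abs{\rho^\bullet}$ and $\abs{\rho^\circ}$ by $s'-s$). Since $\abs{\mu}=\abs{\lambda}+1$ and every $\mathbf{T}(\rho)$ occurring in $F_a(\mathbf{T}(\lambda))$ has $\abs{\rho}\leq\abs{\lambda}+1$, the left-hand sum over $\rho\neq\mu$ is empty; likewise $\abs{\mu'}=\abs{\lambda}+2>\abs{\lambda}$ forces $D^{\lambda}_{\mu'}(t)=0$. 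Both correction terms vanish and $A^{\lambda}_{a,\mu}(t)=1$ drops out immediately. Equivalently (and this is how the paper phrases it), $D(t)$ and $D(t)^{-1}$ are unipotent lower-triangular for the size order while $\tilde A_a(t)$ is supported on $\abs{\mu}=\abs{\lambda}\pm 1$, so conjugation by $D(t)$ leaves the strictly-above-diagonal part of $\tilde A_a(t)$ untouched.
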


\begin{proof}
Denote by $\abs{\lam}:=\abs{\lam^{\bullet}}+ \abs{\lam^{\circ}}$ the total size 
of a bipartition.

 Define an order by size on the bipartitions (so that $\lam \geq \mu$ if 
$\abs{\lam} \geq  \abs{\mu}$), and write the matrix $D(t)$ in this ordered 
basis. Then $D(t)$ is an lower-triangular matrix (since $D^{\lam}_{\mu}(t) \neq 
0$ implies $\lam \geq \mu$) with $1$ on the diagonal. The matrix $D^{-1}(t)$ is 
then also lower-triangular with $1$ on the diagonal. 
 
Meanwhile, the matrix $\tilde{A}_a(t)$ has $1$ only in positions $(\lam, \mu)$ 
where $\abs{\lam} = \abs{\mu} \pm 1$. We claim that in the product $A_a(t) = \, 
D(t)\, \tilde{A}_a(t) \,D(t)^{-1}$ all the entries above the diagonal are equal 
to the corresponding entry in $\tilde{A}_a(t)$. 

Indeed, if $\abs{\lam} < \abs{\mu}$, then $$A^{\lam}_{a, \mu}(t) = \sum_{\nu, 
\nu'} D^{\lam}_{\nu}(t)\, \tilde{A}^{\nu}_{a, \nu'}(t) \,\left( D(t)^{-1} 
\right)^{\nu'}_{\mu}$$

The above arguments imply that the only summands which are not zero correspond 
to bipartitions $\nu, \nu'$ such that $\abs{\nu'} = \abs{\nu} \pm 1$, 
$\abs{\nu'} \geq \abs{\mu}$ and $\abs{\nu} \leq \abs{\lam}$; these conditions 
imply that $\abs{\nu} = \abs{\lam} = \abs{\mu}-1 = \abs{\nu'} -1$. Since we 
require that $D^{\lam}_{\nu}(t), \left( D(t)^{-1} \right)^{\nu'}_{\mu} \neq 0$, 
this implies that $\nu = \lam$, $\nu' = \mu$, and we are done.

\end{proof}
\begin{remark}
 Moreover, the entries of $A_a(t) = \, D(t)\, \tilde{A}_a(t) \,D(t)^{-1}$ below 
the diagonal are non-zero only if $\mu\subset \lam$ and $(\abs{\lam^{\bullet}}, 
\abs{\lam^{\circ}}) =(\abs{\mu^{\bullet}}, \abs{\mu^{\circ}}) -(i, i+1)$ for 
some $i\geq 0$.
\end{remark}

\section{Tensor product categorification}\label{sec:tens_prod_categorif}
In this section, we show that $\cV$ is the tensor product of the categorical 
$\gl(\infty)$-modules $Pol \otimes Pol_t^{\vee}$, where $Pol$ stands for the 
category of polynomial representations of $\gl(\infty)$, and $Pol_t^{\vee}$ is 
the same category but with a modified $\sll_{\bZ}$ categorical action.

\subsection{Category of polynomial representations}\label{ssec:cate_poly_rep}
The category $Pol$ has several equivalent descriptions, see \cite[Section 
5]{SS}, \cite{En}. 

\begin{definition}[Category of polynomial representations]
 The category $Pol$ is the full monoidal Karoubian additive subcategory of the 
category of $\gl(\infty)$-modules generated by the standard representation 
$V_{\infty}:=\bC^{\infty}$ of $\gl(\infty)$.
\end{definition}
Equivalently, this is the free Karoubian additive SM $\bC$-linear category on 
one generator, $V_{\infty}$. This category is equivalent to the category of 
Schur functors, see \cite{SS}.

This is a semisimple symmetric monoidal category, with simple objects $S^{\nu}$, 
$\nu \in \mathcal{P}$ parametrized by the set of all Young diagrams.

Clearly, this can be considered as a \InnaC{lower highest-weight category in 
the sense of Definition \ref{def:lower_hw_cat}, with 
subcategories $Pol^{(k)}$ generated by $S^{\nu}$ where $\abs{\nu} \leq k$; each 
of these is a (semisimple) highest-weight category with a finite poset of 
weights. }

On this category, we have a natural type A action (see \cite{HY, L}), given by 
the functors $F(M):=  V_{\infty} \otimes M$ and its adjoint $E := 
\iota^L\left(V_{\infty, *} \otimes M \right)$ (here $V_{\infty, *}$ is the 
restricted dual of $V_{\infty}$, $\iota: Pol \to Mod_{\gl(\infty)}$ is the 
inclusion functor, and $\iota^R$ its left adjoint). The natural transformation 
$\tau$ is just the symmetry morphism, as in Section \ref{sec:skeletal_action}, 
and $\rx$ is a ``limit'' version of the natural transformation described in 
Section \ref{sec:skeletal_action}. 

Namely, one can consider $Pol$ as a certain limit of categories of polynomial 
representations of algebraic groups $GL_n$ as $n \to \infty$ (see \cite{HY}, 
\cite{En}). Under this identification, any $M \in Pol$ corresponds to a sequence 
$(M_n)_{n \geq 1}$, where $M_n$ is a polynomial $GL_n$-representation. Then 
$$V_{\infty}\otimes M = (\bC^n \otimes M_n)_{n \geq 1}$$ and $\rx_M: 
V_{\infty}\otimes M \longrightarrow V_{\infty}\otimes M $ is defined as 
$(\rx_n:=\sum_{1\leq i, j \leq n} E_{i, j} \otimes E_{i, j})_n$. 

This is a categorical version of the isomorphism \eqref{eq:Fock_limit}.

\mbox{}

On the simple objects, the functors $F_a$, $E_a$ ($a \in \bZ$) act by
$$ F_a(S^{\nu}) = S^{\nu + \square_a}, \;\;\; E_a(S^{\nu}) = S^{\nu - 
\square_a}$$
(as usual, if one of the Young diagrams is not defined, the corresponding module 
is considered to be zero).

This action categorifies the Fock space representation of $\sll_{\bZ}$ described 
in Section \ref{ssec:Fock_space}:
$$Gr \left( Pol \right) \longrightarrow \mathfrak{F}, \;\;\; [S^{\nu}]  \to 
v_{\nu}$$

One can easily see that this is the unique categorification of the 
$\sll_{\bZ}$-module $\mathfrak{F}$: indeed, the weight spaces in $\mathfrak{F}$ 
being one-dimensional, its categorification has to be semisimple, with simple 
objects parametrized by all Young diagrams. It follows that this 
categorification is equivariantly equivalent to $Pol$.

\mbox{}

We also consider a twist of this action: namely, let $t \in \bZ$, and consider 
the endofunctors $F' = V_{\infty, *} \otimes (\cdot)$, $E' =  V_{\infty} \otimes 
(\cdot)$ on the category $Pol$, together with the usual symmetry morphism $\tau' 
= \tau$, and $\rx' \in \End(E')$ defined by $\rx'= - \rx -t$ (this induces the 
corresponding endomorphism of $F'$). Then $F' = \bigoplus_{a \in \bZ} F'_a$, $E' 
= \bigoplus_{a \in \bZ} E'_a$, where 
$$ F'_a(S^{\nu}) = S^{\nu - \square_{-\InnaC{(a+t)}}}, \;\;\; E'_a(S^{\nu}) = 
S^{\nu + \square_{-\InnaC{(a+t)}}}$$
This action categorifies the ``shifted dual'' Fock space representation 
$\mathfrak{F}^{\vee}_t$ of $\sll_{\bZ}$ described in Section 
\ref{ssec:Fock_space}.

By abuse of notation, we denote the category of polynomial representations with 
the usual $\sll_{\bZ}$-action by $Pol$, and the category of polynomial 
representations with the twisted $\sll_{\bZ}$-action by $Pol_t^{\vee}$.
\subsection{Tensor product categorification}\label{ssec:tens_prod_categorif}
\InnaC{Let $t\in \bZ$. We now consider the notion of tensor product 
categorification $Pol \otimes Pol_t^{\vee}$ in a sense similar to \cite[Remark 
3.6]{LW}.

\begin{definition}\label{def:lower_hw_cat}
A lower highest weight category $\C$ is an artinian abelian $\bC$-linear 
category $\C$ together with a poset $ (\Lambda, \leq )$ (poset of weights) 
and a filtration $\Lambda = \bigcup_{k \in \bZ_+} \Lambda^k$, 
 such that the following conditions hold:
 \begin{enumerate}
 \item The set $\Lambda$ is in bijection with the set of isomorphism classes of 
simple objects in $\C$.

 \item For each $\xi \in \Lambda$, the Serre subcategory 
$\C( \leq \xi)$ generated by simples $\{L(\lambda), \lambda \leq \xi\}$ 
contains a projective cover $\Delta(\xi)$ of $L(\xi)$, and an injective 
hull $\nabla(\xi)$ of $\xi$. The objects $\Delta(\xi)$, $\nabla(\xi)$ are 
called {\it standard} and {\it costandard} objects in $\C$.

\item There exists precisely one isomorphism class of indecomposable objects 
$T(\xi)$ in $\C$ which has $\Delta(\xi)$ as a submodule, $T(\xi)/\Delta(\xi)$ 
has a filtration with standard subquotients, and $T(\xi)$ also has a filtration 
with costandard subquotients.

Such objects $T(\xi)$ are the indecomposable {\it tilting} objects in $\C$.

\item Let $k \geq 0$, and let $\C^k$ be the full subcategory of $\C$ whose 
objects are subquotients of finite direct sums of objects $T(\xi)$, $\xi \in 
\Lambda^k$. Then each $\C^k$ is a highest weight category with poset 
$(\Lambda_k, \leq)$, simple objects $\{L(\xi), \xi \in 
\Lambda^k\}$, standard objects $\{\Delta(\xi), \xi \in 
\Lambda^k\}$, costandard objects $\{\nabla(\xi), \xi \in 
\Lambda^k\}$, and tilting objects $\{T(\xi), \xi \in 
\Lambda^k\}$. The category $\C^k$ also has enough projective and injective 
objects.

\item The subcategories $\C^k$ form a filtration on the category $\C$: $\C = 
\bigcup_{k \geq 0} \C^k$.

 \end{enumerate}

\end{definition}
\begin{remark}
 The main difference between a highest-weight category and a lower 
highest-weight category is the possible lack of projectives and injectives in 
$\C$. 
\end{remark}

\begin{definition}\label{def:categorical_tensor_product}
 An lower highest weight category $\C$ with an $\sll_{\bZ}$-action $(E, F, \rx, 
\tau)$ is a tensor product categorification of $Pol \otimes Pol_t^{\vee}$ if it 
satisfies the following conditions:
 \begin{enumerate}
\item\label{cond:1} Its isomorphism classes of simple objects are in bijection 
with $ \mathcal{P} \times \mathcal{P}$ (direct product of the sets of simples 
in 
$Pol$, $Pol_t^{\vee}$), and the lower highest weight structure on $\C$ is 
compatible with the partial order given by $$ \lam \leq \mu \;\;\; \text{ if } 
\;\;\; \omega_{\lam^{\bullet}} - \omega_{\lam^{\circ}} = \omega_{\mu^{\bullet}} 
- \omega_{\mu^{\circ}}, \;\;\; \;\; \omega_{\lam^{\bullet}} \geq 
\omega_{\mu^{\bullet}}$$
\item\label{cond:2} The $\bZ_+$-filtration on the set of weights of $\C$ given 
in the lower highest weight structure corresponds to the filtration 
$\{\lambda \in \mathcal{P} \times \mathcal{P} :\abs{\lambda} \leq k\}$.
  \item\label{cond:3} Consider the standard objects $\Delta(\lambda)$ in $\C$. 
Then for any $a \in \bZ$,
  $$ 0 \to \Delta(\lam + \InnaD{\young(\bullet)}_a) \longrightarrow 
F_a(\Delta(\lam)) 
\longrightarrow \Delta(\lam - \square_{-\InnaC{(a+t)}}) \to 0 $$
  and 
  $$ 0 \to \Delta(\lam + \square_{-\InnaC{(a+t)}}) \longrightarrow 
E_a(\Delta(\lam)) 
\longrightarrow \Delta(\lam - \InnaD{\young(\bullet)}_{a}) \to 0 $$
  and similarly for costandard objects.
 \end{enumerate}
\end{definition}

\begin{remark}
 The requirements \eqref{cond:1}, \eqref{cond:3} above are explicit 
interpretations of the requirements of \cite[Remark 3.6]{LW}, with the exception 
that \cite[Remark 3.6]{LW} requires $\C$ only to be standardly stratified, while 
we give a stronger requirement of lower highest weight category. We do not know 
if this can be weakened to require a standardly stratified.
 
 An additional condition present in the requirements of \cite[Remark 3.6]{LW} is 
that the associated graded category $\bigoplus_{\xi} \quotient{\C(\leq 
\xi)}{\C(< \xi)}$ be equivalent to $Pol \boxtimes Pol_t^{\vee}$. In our case, an 
immediate consequence of the fact that $\C$ is a lower highest weight category 
is that the associated graded is semisimple, and so it is automatically 
equivalent to $Pol \boxtimes Pol_t^{\vee}$ due to \eqref{cond:1}.
\end{remark}

We present below an easy result on categories $\C$ satisfying the conditions in 
Definition \ref{def:categorical_tensor_product}.}

\InnaC{
Let $\C$ be a category satisfying the conditions in Definition 
\ref{def:categorical_tensor_product}, and let $U \in \C$ be the simple object 
corresponding to weight $(\InnaC{\varnothing}, \InnaC{\varnothing})$. 

Denote by $\C^{tilt}$ the full subcategory of tilting objects in $\C$, and by 
$\C'$ the (strictly) full Karoubian additive subcategory of $\C$ generated by 
objects of the form $G_1 \ldots G_m U$, where $G_1, \ldots, G_m \in \{E_a, F_a 
\mid a \in \bZ\}$, $m \geq 0$ (we will call such objects {\it translations of 
$U$}). }
\begin{lemma}\label{lem:categ_act_tilting}
 The subcategories $\C'$ and $\C^{tilt}$ coincide.
 \end{lemma}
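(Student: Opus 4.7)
The plan is to prove the two inclusions $\C' \subseteq \C^{tilt}$ and $\C^{tilt} \subseteq \C'$ separately, with the first direction being essentially formal and the second requiring an induction whose key step is a maximality argument in the weight poset.

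For $\C' \subseteq \C^{tilt}$, I will first observe that $U = L((\varnothing, \varnothing))$. Because $\omega_\varnothing$ is the unique maximum of the Fock weights, no bipartition $\xi \neq (\varnothing, \varnothing)$ satisfies both conditions for $\xi \leq (\varnothing, \varnothing)$; hence this weight is minimal in its weight space, and $L = \Delta = \nabla = T$ at $(\varnothing, \varnothing)$, so $U$ is tilting. To propagate this under translation, I use that $F_a$ is exact: applying it termwise to a $\Delta$-filtration of a tilting $M$, condition \ref{cond:3} of Definition \ref{def:categorical_tensor_product} shows that each $F_a \Delta(\xi)$ is an extension of two standards, giving a $\Delta$-filtration of $F_a M$, and the costandard analog of the same condition gives a $\nabla$-filtration. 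Since $F_a M$ lies in some $\C^k$, which is a bona fide highest weight category, Krull--Schmidt then decomposes it as a direct sum of indecomposable tiltings. The analogous statement for $E_a$ together with induction on word length shows every $G_1 \cdots G_m U$ is tilting.

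For the reverse inclusion I induct on $|\lambda|$; the base case $|\lambda|=0$ gives $T((\varnothing, \varnothing)) = U \in \C'$ directly. In the inductive step, if $\lambda = (\varnothing, \lambda^\circ)$ with $\lambda^\circ \neq \varnothing$, then $\lambda$ is itself minimal in its weight space (by the same argument as for $U$), so $T(\lambda) = \Delta(\lambda)$; picking a removable box of content $b$ from $\lambda^\circ$ and setting $a := -(b+t)$, $\mu := (\varnothing, \lambda^\circ - \square_b)$, I also have $T(\mu) = \Delta(\mu)$, and the second sequence of condition \ref{cond:3} collapses since $\mu^\bullet = \varnothing$ (the quotient $\Delta(\mu - \blacksquare_a)$ vanishes), yielding $E_a T(\mu) = E_a \Delta(\mu) = \Delta(\mu + \square_{-(a+t)}) = \Delta(\lambda) = T(\lambda)$. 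If instead $\lambda^\bullet \neq \varnothing$, I pick a removable box of content $a$ from $\lambda^\bullet$ and set $\mu := (\lambda^\bullet - \square_a, \lambda^\circ)$; by induction $T(\mu) \in \C'$, so $F_a T(\mu) \in \C'$, and by the first half of the proof $F_a T(\mu)$ is tilting, decomposing as $\bigoplus_\nu T(\nu)^{m_\nu}$.

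The main obstacle is then to show $m_\lambda \geq 1$, which I will deduce from the maximality of $\lambda$ in the $\Delta$-support of $F_a T(\mu)$ within its weight space. By condition \ref{cond:3}, any $\Delta(\nu)$ in the $\Delta$-filtration of $F_a T(\mu)$ arises from some $\xi$ in the $\Delta$-filtration of $T(\mu)$ (hence $\xi \leq \mu$) as either $\nu = \xi + \blacksquare_a$ or $\nu = \xi - \square_{-(a+t)}$. For the first form, a direct Fock weight computation gives $\omega_{\nu^\bullet} = \omega_{\xi^\bullet} - \alpha_a \geq \omega_{\mu^\bullet} - \alpha_a = \omega_{\lambda^\bullet}$, which yields $\nu \leq \lambda$ with equality only for $\xi = \mu$; the latter contributes $\Delta(\lambda)$ to the filtration with multiplicity one. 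For the second form, the weights of $\nu$ and $\lambda$ in the sense of Definition \ref{def:categorical_tensor_product}(\ref{cond:1}) differ by a multiple of $\alpha_a - \alpha_{-(a+t)}$ and coincide only in the exceptional case $t = -2a$, where $\omega_{\nu^\bullet} = \omega_{\xi^\bullet} \geq \omega_{\mu^\bullet} > \omega_{\mu^\bullet} - \alpha_a = \omega_{\lambda^\bullet}$ forces $\nu < \lambda$ strictly. Hence no $\nu > \lambda$ appears in the $\Delta$-support, $m_\lambda = (F_a T(\mu) : \Delta(\lambda)) \geq 1$, and $T(\lambda)$ is a direct summand of $F_a T(\mu) \in \C'$, completing the induction.
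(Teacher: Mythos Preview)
Your proof is correct. The first inclusion $\C' \subseteq \C^{tilt}$ matches the paper's argument exactly. For the reverse inclusion $\C^{tilt} \subseteq \C'$, you and the paper arrive at the same conclusion by slightly different routes. The paper observes that the exact sequences in Condition~\eqref{cond:3} have the ``new'' standard object as a \emph{subobject} of $F_a\Delta(\mu)$ (respectively $E_a\Delta(\mu)$); since the functors are exact, this propagates along any chain of translations starting from $U=\Delta(\varnothing,\varnothing)$, so $\Delta(\lambda)$ embeds into some $T\in\C'$, and then the standard highest-weight fact that a tilting object containing $\Delta(\lambda)$ as a submodule has $T(\lambda)$ as a direct summand finishes the argument in one line. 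You instead run an explicit induction on $\abs{\lambda}$ and prove directly, via Fock-weight computations, that $\lambda$ is maximal in the $\Delta$-support of $F_aT(\mu)$ within its weight class; this forces $m_\lambda=(F_aT(\mu):\Delta(\lambda))\geq 1$. Your route is more hands-on and self-contained (it does not invoke the general ``$\Delta(\lambda)\hookrightarrow T$ tilting $\Rightarrow T(\lambda)\mid T$'' fact), while the paper's route is shorter because it exploits the subobject position in the exact sequences of Condition~\eqref{cond:3} rather than reconstructing maximality from scratch.
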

 \begin{proof}
 Recall that the partial order on the weights in $\C$ satisfies \ref{cond:1} in 
Definition \ref{def:categorical_tensor_product}, the simple object $U$ is both 
standard and constandard, hence tilting. Condition \ref{cond:3} in Definition 
\ref{def:categorical_tensor_product} implies that for any $a \in \bZ$, the 
functors $E_a, F_a$ preserve the subcategories of standardly-filtered objects 
and of constandardly-filtered objects, and thus preserve the subcategory of 
tilting objects. Hence translations of $U$ are tilting objects, and $\C'\subset 
\C^{tilt}$.
 
 Next, Condition  \ref{cond:3} implies that for any $\lambda \in \mathcal{P} 
\times \mathcal{P}$, the standard object $\Delta(\lambda)$ occurs as a subobject 
of some $T\in \C'$. The corresponding indecomposable direct summand $T(\lambda)$ 
of $T$ is then the indecomposable tilting object $T(\lambda)$, implying that 
$\C^{tilt}\subset \C'$.
 \end{proof}

We \InnaC{now} show that the category $\cV$ with the $\sll_{\bZ}$-action 
satisfies the definition of a tensor product $Pol \otimes Pol_t^{\vee}$, and 
then show that such a tensor product is unique.

\begin{theorem}\label{thrm:tens_prod_categorif}
 The category $\cV$ with the action of $\sll_{\bZ}$ is a tensor product 
categorification $Pol \otimes Pol_t^{\vee}$ in the sense of \InnaC{Definition 
\ref{def:categorical_tensor_product}.}
\end{theorem}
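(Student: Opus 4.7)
The plan is to verify, one by one, the three conditions of Definition \ref{def:categorical_tensor_product} for $\cV$ equipped with the canonical $\sll_{\bZ}$-action from Section \ref{sec:skeletal_action}.

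First, I would establish that $\cV$ satisfies the axioms of a lower highest-weight category in the sense of Definition \ref{def:lower_hw_cat}. This is essentially contained in \cite[Section 4.7]{EHS}: the filtration $\{\mathcal{V}_t^k\}_{k \geq 0}$ exhausts $\cV$, and each $\mathcal{V}_t^k$ is a genuine highest-weight category with simples $\mathbf{L}(\lambda)$, standards $\mathbf{V}(\lambda)$, costandards obtained via the duality functor on $\mathcal{V}_t^k$, and tilting objects $\mathbf{T}(\lambda)$ indexed by bipartitions $\lambda$ with $\abs{\lambda} \leq k$. This simultaneously delivers the set-theoretic part of condition \ref{cond:1} (weight set $\mathcal{P} \times \mathcal{P}$, matching the classification of simples in $Pol \boxtimes Pol_t^{\vee}$) and condition \ref{cond:2} (the $\bZ_+$-filtration corresponds exactly to $\{\lambda : \abs{\lambda} \leq k\}$).

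Next, I would verify the partial order compatibility of condition \ref{cond:1}. The highest-weight order on $\mathcal{V}_t^k$ is pulled back via the local equivalences $\mathcal{F}_{m|n}$ (for $m, n \gg k$) from the dominance order on $\gl(m|n)$-weights; in particular, comparable weights must lie in the same block, and by \cite{EHS} blocks in $\cV$ are classified by the cores of the weight diagrams $d'_\lambda$. Applying Lemma \ref{lem:equal_cores_weights}, the equality of cores translates into the equality $\omega_{\lambda^\bullet} - \omega_{\lambda^\circ} = \omega_{\mu^\bullet} - \omega_{\mu^\circ}$ of $\sll_{\bZ}$-weights in $\mathfrak{F} \otimes \mathfrak{F}_t^{\vee}$, which is the first half of the required poset condition. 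The second half, $\omega_{\lambda^\bullet} \geq \omega_{\mu^\bullet}$ within a block, reduces to the standard translation between the Bruhat-type orderings on weight diagrams used in \cite{EHS} and dominance of partitions, as recorded in Section \ref{sec:weight_diag}.

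Finally, condition \ref{cond:3} on the action of $F_a$ and $E_a$ on standards is exactly the content of Proposition \ref{prop:action_standards} and the remark following it, after identifying $\Delta(\lambda) = \mathbf{V}(\lambda)$; the analogous statement for costandards is obtained from it by applying the duality functor on each $\mathcal{V}_t^k$, which exchanges standards and costandards while preserving blocks and commuting with the $\sll_{\bZ}$-action. The main obstacle will be the second step: verifying that the Bruhat-type order used to cut out the highest-weight structure of \cite{EHS} refines (and within each block is equivalent to) the dominance order on $\omega_{\lambda^\bullet}$ is a combinatorial bookkeeping task requiring careful use of Lemma \ref{lem:CW_vs_V} and Lemma \ref{lem:equal_cores_weights}. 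Once that is settled, the remaining conditions follow directly from the structure of $\cV$ established in \cite{EHS} and the categorical action computations of Section \ref{sec:actions_on_V}.
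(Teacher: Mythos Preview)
Your proposal is correct and follows essentially the same route as the paper: verify the lower highest-weight structure and condition \eqref{cond:2} from \cite{EHS}, use Lemma \ref{lem:equal_cores_weights} for the block/weight-equality half of condition \eqref{cond:1}, and invoke Proposition \ref{prop:action_standards} together with the contravariant duality $(\cdot)^{\vee}$ for condition \eqref{cond:3}. The paper resolves what you flag as the ``main obstacle'' (the dominance half of condition \eqref{cond:1}) by appealing directly to the cap-diagram criterion of \cite[Lemma 4.5.2]{EHS}: $[\mathbf{V}(\lam):\mathbf{L}(\mu)] \neq 0$ forces $d'_{\lam}$ to arise from $d'_{\mu}$ by moving crosses rightward along caps, which simultaneously gives $core(d'_{\lam}) = core(d'_{\mu})$ and the partial-sum inequality $\sum_{i\leq a}\lam^{\bullet}_i \geq \sum_{i\leq a}\mu^{\bullet}_i$ equivalent to $\omega_{\lam^{\bullet}} \geq \omega_{\mu^{\bullet}}$.
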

\begin{proof}

To prove Condition \eqref{cond:1}, we need to show that $[\mathbf{V}(\lam): 
\mathbf{L}(\mu)]\neq 0$ $\Longrightarrow$ $\lam \InnaC{\leq} \mu$ \InnaC{in 
the order given by the lower highest weight structure on $\mathcal{V}_t$}. 
Indeed,  
recall that by \cite[Lemma 8.3.2]{EHS}, $[\mathbf{V}(\lam): 
\mathbf{L}(\mu)]\neq 
0$ iff one can obtain $d'_{\lam}$ from $d'_{\mu}$ by moving finitely many 
crosses in the cap diagram of $d'_{\mu}$ from the left end of a cap to the 
right 
end of this cap. This means that $d'_{\lam}$, $d'_{\mu}$ have the same core, 
and 
that $\sum_{i \leq a} \lam^{\bullet}_i - i \geq \sum_{i \leq a} \mu^{\bullet}_i 
- i $ for \InnaC{any $a \geq 1$}. 

By Lemma \ref{lem:equal_cores_weights}, $core(d'_{\lam}) = core(d'_{\mu})$ 
implies $\omega_{\lam^{\bullet}} - \omega_{\lam^{\circ}} = 
\omega_{\mu^{\bullet}} - \omega_{\mu^{\circ}}$,  and \InnaC{$\sum_{i \leq a} 
\lam^{\bullet}_i - i \geq \sum_{i \leq a} \mu^{\bullet}_i 
- i $ implies
$\omega_{\lam^{\bullet}} \geq \omega_{\mu^{\bullet}}$. Hence $\lambda \leq 
\mu$.}

\InnaC{Finally, to prove Condition \eqref{cond:3}, recall that the statement on 
translation of standard objects by functors $E_a, F_a$ was proved in Proposition 
\ref{prop:action_standards}. To prove the analogous statement for translation of 
costandard objects, recall from \cite{EHS} that $\cV$ possesses a contravariant 
endofunctor $(\cdot)^{\vee}: \cV^{op} \to \cV$ which is exact, interchanges 
standard objects with constandard objects (thus preserving moxed tensor power of 
$V_t$, which are tilting), and which commutes with tensor products. Applying the 
functor $(\cdot)^{\vee}$ to the exact sequences in Proposition 
\ref{prop:action_standards}, we obtain the required results for costandard 
objects. }
\end{proof}
\InnaC{\begin{theorem}\label{thrm:uniqueness_tens_prod_categ}
 The tensor product $Pol \otimes Pol_t^{\vee}$ is unique in the following sense: 
consider a lower highest-weight category $\C$ with an $\sll_{\bZ}$-action $(F', 
E', \rx', \tau')$ satisfying \InnaC{conditions in Definition 
\ref{def:categorical_tensor_product}}, and such that the natural transformation 
$\tau'_{E'F'}: E'F' \to F'E'$ induced by $\tau'$ is an isomorphism.

 Then we have a strongly equivariant equivalence $\C \cong \cV$.
\end{theorem}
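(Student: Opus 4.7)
The plan is to construct a strongly equivariant equivalence $\Psi : \cV \to \C$ in three stages: identify the tilting subcategories on both sides with translations of the unit, build an equivalence between the tilting subcategories via the universal property of $\mathcal{D}_t$, and extend to an equivalence of abelian categories using the lower highest-weight structure. Throughout, let $U \in \C$ be the simple object associated to $(\varnothing, \varnothing)$; by condition \eqref{cond:1} of Definition \ref{def:categorical_tensor_product} and the lower highest-weight axioms, $\End_{\C}(U) = \bC$ and $U$ is both standard and costandard, hence tilting.

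For the first stage, Lemma \ref{lem:categ_act_tilting} applied to both $\cV$ and $\C$ identifies the tilting subcategory with the Karoubian additive subcategory of translations of the unit. On the $\cV$-side this Karoubian closure is exactly $\mathcal{D}_t$, since $F = V_t \otimes (\cdot)$ and $E = V_t^* \otimes (\cdot)$ and so translations of $\triv$ are mixed tensor powers of $V_t$. For the second stage, I would assemble the data $(F', E', \rx', \tau')$ together with the invertibility hypothesis on $\tau'_{E'F'}$ into a functor from a walled Brauer-style universal category into the translation subcategory $\C'$. Namely, $F'(U)$ is the candidate image of $V_t$ with dual $E'(U)$; the adjunction unit/counit of $F'\dashv E'$ provide (co)evaluation morphisms, $\tau'$ provides the braiding on $F'^{\,d}(U)$, and the isomorphism $\tau'_{E'F'}\colon E'F'(U)\to F'E'(U)$ supplies the mixed braiding. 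The categorical ``bubble'' endomorphism $U \to F'E'(U) \to U$ computed from these adjunctions acts on $[U] \in K_0(\C)$ as the scalar by which $[e_0,f_0]$ acts on $v_{\varnothing}\otimes v_{\varnothing}\in \mathfrak{F}\otimes\mathfrak{F}^{\vee}_t$, which by condition \eqref{cond:3} and the explicit description of $\mathfrak{F}^{\vee}_t$ equals $t$. The universal property of $\mathcal{D}_t$ then yields an additive $\sll_{\bZ}$-equivariant functor $\Psi^{tilt}: \mathcal{D}_t = \cV^{tilt} \to \C^{tilt}$ with $V_t \mapsto F'(U)$.

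For the third stage, I would prove $\Psi^{tilt}$ is fully faithful by comparing Hom-space dimensions. On the $\cV$-side, Corollary \ref{cor:Hom_two_tilting} gives
\[
\dim \Hom_{\cV}(\mathbf{T}(\lam), \mathbf{T}(\mu)) = \sum_{\nu} (\mathbf{T}(\lam):\mathbf{V}(\nu))_{\cV}\, (\mathbf{T}(\mu):\mathbf{V}(\nu))_{\cV},
\]
and the analogous identity holds in each $\C^k$ by standard highest-weight theory. Condition \eqref{cond:3} of Definition \ref{def:categorical_tensor_product} determines $(T(\lam):\Delta(\nu))_{\C}$ by the same recursion (on translating $U$ by chains of $F'_a, E'_a$) as Proposition \ref{prop:action_standards} gives for $(\mathbf{T}(\lam):\mathbf{V}(\nu))_{\cV}$, so the two multiplicities agree and the Hom-space dimensions match. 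Since $\Psi^{tilt}$ is also essentially surjective by the first stage, it is an equivalence. To extend $\Psi^{tilt}$ to $\Psi:\cV\to\C$, I would use that every object in either category has a tilting presentation (by the lower highest-weight axioms and the characterization of tiltings as translations of the unit), so the equivalence on tiltings extends uniquely to an exact equivalence of abelian categories, which inherits strong equivariance from $\Psi^{tilt}$.

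The main obstacle is the second stage: pinning down that the categorical trace of $F'$ at $U$ equals $t$. Since $\C$ is not a priori a symmetric monoidal category, the ``walled Brauer data'' on the translations of $U$ must be constructed purely from $(F', E', \rx', \tau')$ and adjunction data, with the invertibility of $\tau'_{E'F'}$ essential to produce the mixed braiding. Linking the resulting bubble scalar to the $\sll_{\bZ}$-action on $K_0(\C)\cong \mathfrak{F}\otimes \mathfrak{F}^{\vee}_t$ and extracting the value $t$ requires a careful trace computation, comparing the categorical trace of the identity on $F'(U)$ with the scalar action of $h_0$ on $v_\varnothing \otimes v_\varnothing$ corrected by the shift built into $\mathfrak{F}^{\vee}_t$.
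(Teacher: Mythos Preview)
Your overall architecture matches the paper's proof closely: build a functor from the oriented Brauer category into $\End(\C)$, evaluate at $U$ to land in the tilting subcategory, compare Hom-dimensions via standard multiplicities to get an equivalence $\cD_t \simeq \C^{tilt}$, and then extend abelianly. Two points deserve correction.

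First, your proposed method for showing the bubble equals $t$ does not work. The scalar by which $h_0=[e_0,f_0]$ acts on $v_{\varnothing}\otimes v_{\varnothing}$ in $\mathfrak{F}\otimes\mathfrak{F}^{\vee}_t$ is $1$ when $t\neq 0$ and $0$ when $t=0$; it does not recover $t$. The paper instead exploits the natural transformation $\rx'$, which you have not used at all in this step. In the Brauer category one has the relation $\rx^2\rvert_{X^*}=-\dim(X)\,\rx$ as endomorphisms of $X\otimes X^*$. Transporting this via strong equivariance gives $\rx'^2=-t'\rx'$ on $F'E'(U)$, where $t'$ is the bubble scalar. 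But Condition \eqref{cond:3} forces the generalized eigenvalues of $\rx'$ on $F'E'(U)$ to be exactly $0$ and $-t$, whence $t'=t$. This is the computation you need; the $K_0$ argument via $h_a$ is a dead end.

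Second, your extension step is too glib. Saying ``every object has a tilting presentation, so extend'' hides real content: one must produce an \emph{exact} functor $\cV\to\C$ before comparing. The paper does this by invoking the universal property of $\cV$ as the abelian envelope of $\cD_t$: it suffices to check that $\Psi^{tilt}$ is pre-exact, which follows from the splitting trick (\cite[Proposition 4.8.1]{EHS}) combined with the fact that $F',E'$ kill no simple object. Only after one has an exact faithful functor does the tilting-presentation argument establish fullness and essential surjectivity.
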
}
\begin{remark}
 The author has been told by J. Brundan that the requirement on $\tau'_{E'F'}$ 
can be lifted (i.e. will hold automatically), due to Rouquier's 
``$K_0$-control'' theorem given in \cite{R}. This will be explained in detail 
elsewhere.
\end{remark}

\begin{proof}

Consider the oriented Brauer category $\mathcal{OB}$ (see for example 
\cite{BCNR}). This is the free symmetric monoidal $\bC$-linear category 
generated by a single object $X$ and its dual. The skeleton subcategory 
${\cD^{0}_t}$ of the Deligne category $\underline{Rep}(GL_t)$, containing the 
mixed tensor powers of $V_t$, is then the specialization of $\mathcal{OB}$ 
under the relation $\dim(X) = t$, where $$\dim(\InnaC{X}) := ev_\InnaC{X} \circ 
\sigma_{\InnaC{X}, \InnaC{X}^*} \circ coev_\InnaC{X} \in 
\End_{\mathcal{OB}}(\triv)$$

Given any category $\C$ with an $\sll_{\bZ}$-action $(F', E', \rx', \tau')$, we 
have a monoidal functor $\Psi$ from the oriented Brauer category $\mathcal{OB}$ 
to $\End(\C)$ taking $X$ to $F$, $X^*$ to $E$, $\sigma_{\InnaC{X}, \InnaC{X}}$ 
to $ev_\InnaC{X}$ and $coev_\InnaC{X}$ to $\eps, \eta$, the adjunction unit and 
counit of $E', F'$, and $\sigma_{\InnaC{X}, \InnaC{X}}$ to $\tau'$. To check 
that this is indeed a functor, we need to check that $E', F', \eps, \eta, 
\tau'$ satisfy the relations in \cite[Theorem 1.1]{BCNR}; this is indeed the 
case, due to the conditions on $E', F', \tau'$ in the definition of a 
$\sll_{\bZ}$ categorical action, \InnaC{together with the requirement that the 
natural transformation $\tau'_{E'F'}$ be invertible}.

Let $U$ be the simple object in $\C$ corresponding to $(\InnaC{\varnothing}, 
\InnaC{\varnothing}) \in \mathcal{P} \times \mathcal{P}$. We define a functor 
$\InnaC{\Phi}: \mathcal{OB} \to \C$ by setting $\InnaC{\Phi}(\cdot) := 
\Psi(\cdot) U$. Recall that $\mathcal{OB}$ has a type A action, defined just as 
in Definition \ref{def:skeletal_action_sl_2}: $F = \InnaC{X}\otimes (\cdot)$, 
$E = \InnaC{X}^* \otimes (\cdot)$, $\tau = \sigma_{\InnaC{X}, \InnaC{X}}$ is 
the symmetry morphism of $\InnaC{X}$ and the morphism $\rx_{\InnaC{X}^{\otimes 
r} \otimes \InnaC{X}^{* \otimes s}}: \InnaC{X}^{\otimes r+1} \otimes 
\InnaC{X}^{* \otimes s} \to \InnaC{X}^{\otimes r+1} \otimes \InnaC{X}^{* \otimes 
s}$ is given by
$$\rx_{\InnaC{X}^{\otimes r} \otimes \InnaC{X}^{* \otimes s}} = \sum_{i = 1}^r 
\sigma_{1, i+1} - \sum_{j=1}^s ev_{1, j}$$ where $\sigma_{1, i}$ denotes the 
symmetry morphism between the first and the $i$-th $\InnaC{X}$-factors, and 
$ev_{1, j}$ denotes the contraction of the first $\InnaC{X}$-factor and the 
$j$-th $\InnaC{X}^*$-factor. 
% % %%\Inna{this is because % $act_{V^*}: V \otimes V^* \otimes V^* \to V^*$ % 
% % is given % % % by $-ev_{1,3}$!!}

We claim that this functor is strongly equivariant, 
i.e. there exists a natural isomorphism $\zeta: F'\InnaC{\Phi}  \to  
\InnaC{\Phi} F$ such that 
\begin{itemize}                                       
\item The induced natural transformation $\zeta^{\vee}: \InnaC{\Phi} E \to 
E' \InnaC{\Phi}$ is an isomorphism,
\item $ \zeta \circ \rx'\InnaC{\Phi} = \InnaC{\Phi}\rx \circ \zeta: 
F'\InnaC{\Phi}  \to \InnaC{\Phi} F $,
\item $ \zeta F \circ  F'\zeta \circ \tau'\InnaC{\Phi} = \InnaC{\Phi}\tau \circ  
\zeta F \circ  F'\zeta: F'^2\InnaC{\Phi}  \to  \InnaC{\Phi} F^2$,
\end{itemize}

The natural isomorphism $\zeta$ is obvious, and the only non-trivial part of 
this statement is the fact that $\rx$ is taken to $\rx'$. Due to the degenerate 
Hecke algebra relations on $\rx', \tau'$ and the adjointness of $F', E'$, the 
natural transformation $$\rx'F'^r E'^s:F'^{r+1} E'^s \to F'^{r+1} E'^s $$ can be 
expressed in terms of $\tau'$, adjunction units and counits, and the natural 
transformation $$F'^r E'^s\rx':F'^{r} E'^s F \to F'^{r+1} E'^s F$$
This means that it is enough to check that $ \zeta \circ 
\rx'\InnaC{\Phi}\rvert_{ F'\InnaC{\Phi}\triv} \stackrel{?}{=} \InnaC{\Phi}\rx 
\circ \zeta \rvert_{\InnaC{\Phi} F \triv}$ where $\triv \in \mathcal{OB}$ is 
the unit object. This statement is clearly true: the morphisms $\rx_{\triv} : 
\InnaC{X} \to \InnaC{X}$ and $\rx'_U: F'U \to F'U$ are both zero (the latter 
can be seen from the action of $\sll_{\bZ}$ on $[U] \in Gr(\C)$).

Thus $\InnaC{\Phi}$ is a strongly equivariant functor. We now claim that it 
factors through the specialization ${\cD^{0}_t}$ of $\mathcal{OB}$. This means 
that the functor $\InnaC{\Phi}$ takes the morphism $\dim(\InnaC{X}) \in 
\End_{\mathcal{OB}}(\triv)$ to $t\id_U$. Indeed, since $U$ is a simple object, 
$\InnaC{\Phi}(\dim(\InnaC{X}))$ is a scalar multiple of $\id_U$. 

\InnaC{Let us call 
this scalar $t'$, and show that $t' = t$. Indeed, consider 
$\rx'^2\rvert_{E'(U)}: F'E'(U) \to F'E'(U)$. Then $$\zeta \circ \rx'^2 = 
\InnaC{\Phi} \rx^2 \circ \zeta: F'E' U \to F'E'U$$ while $$\rx^2 
\rvert_{\InnaC{X}^*}: \InnaC{X} \otimes \InnaC{X}^* \to \InnaC{X} \otimes 
\InnaC{X}^* = -\dim(\InnaC{X}) \rx.$$
Hence $\rx'^2 = -t'\rx' \in \End(F'E'(U))$. Now, $\rx'$ has two generalized
eigenvalues on $F'E'(U)$, which are $0$ and $-t$ (this follows from Condition 
\ref{cond:3}). This implies $t'=t$ and $\InnaC{\Phi}(\dim(\InnaC{X})) = 
t\id_U$. }

This proves the required result, and gives us a strongly equivariant 
$\bC$-linear functor $\InnaC{\Phi}: {\cD^{0}_t} \to \C$, and extends to a 
strongly equivariant additive $\bC$-linear functor $\InnaC{\Phi}: \cD_t \to \C$ 
from the additive envelope of ${\cD^{0}_t}$. 

\mbox{}

We now show that this functor extends to a strongly equivariant \InnaC{faithful} 
exact functor $\InnaC{\Phi}: \cV \to \C$. 

By the universal property of $\cV$ as the abelian envelope of $\cD_t$ (see 
\cite[Section 9]{EHS}), it is enough to show that the functor $\InnaC{\Phi}: 
\cD_t \to \C$ is pre-exact, i.e. that for every morphism $f$ in $\cD_t$ which is 
a monomorphism (resp. epimorphism) in $\cV$, the morphism $\InnaC{\Phi}(f)$ is 
again a monomorphism (resp. epimorphism) in $\C$.

The proof is analogous to \InnaC{arguments in \cite[Section 9]{EHS}}. We will 
consider the case of an epimorphism (the case of a monomorphism is similar). In 
\cite[Proposition 8.6.1]{EHS}, it was shown that given an epimorphism $f: A \to 
B$ in $\cD_t$, there exists a non-zero object $Z \in \InnaC{\cD_t}$ such that 
the epimorphism $f\otimes \id_Z: Z \otimes A \to Z \otimes B$ is split. 

This implies that $\InnaC{\Phi}(\id_Z \otimes f)$ is a split epimorphism as 
well. \InnaC{Write $Z =  \InnaC{e} \bigoplus_{i}\InnaC{V_t}^{\otimes r_i} 
\otimes \InnaC{V_t}^{* \otimes s_i}$ for some idempotent $e \in 
\End_{\cD}(\bigoplus_{i}\InnaC{V_t}^{\otimes r_i} \otimes \InnaC{V_t}^{* 
\otimes s_i})$. Then the idempotent $\Phi(e \otimes \id_A) \in \End_{\C}\left( 
\Phi(\bigoplus_{i}\InnaC{V_t}^{\otimes r_i} \otimes \InnaC{V_t}^{* \otimes 
s_i} \otimes A) \right)$ induces an idempotent $e'_A \in \bigoplus_{i} 
F'^{r_i}E'^{s_i} \Phi A$ (through the isomorphism 
$\oplus_i\zeta^{r_i}\zeta^{\vee s_i}$).

Similarly, $\Phi(e \otimes \id_B)$ induces an idempotent $e'_B \in 
\bigoplus_{i} 
F'^{r_i}E'^{s_i} \Phi B$. By the construction above, we have a commutative 
diagram

$$\begin{CD}
   \Psi(Z \otimes A) @>{\InnaC{\Phi}(\id_Z \otimes f)}>> \Psi(Z \otimes B)\\
   @VVV @VVV \\
   \InnaC{ e'_A} \bigoplus_i F'^{r_i} E'^{s_i} A @>{ \oplus_i F'^{r_i} 
E'^{s_i}\InnaC{\Phi}(f)}>>  \InnaC{e'_B} \bigoplus_i F'^{r_i} E'^{s_i} B
  \end{CD}
$$

where the vertical arrows are isomorphisms.} Thus $\oplus_i F'^{r_i} 
E'^{s_i}\InnaC{\Phi}(f)$ is a split epimorphism, i.e. $\oplus_i F'^{r_i} 
E'^{s_i} Coker \InnaC{\Phi}(f)=0$. Yet $F', E'$ are exact endofunctors of $\C$ 
which do not annihilate any simple object, as can be seen from the action of 
$\sll_{\bZ}$ on the Grothendieck group. This implies that $Coker 
\InnaC{\Phi}(f)=0$, and $\InnaC{\Phi}(f)$ is an epimorphism. 

Thus we obtained a strongly equivariant \InnaC{faithful} exact functor 
$\InnaC{\Phi}: \cV \to \C$, taking $\triv$ to the simple object $U$.

\InnaC{
 \begin{lemma}\label{auxlem:standard_to_standard}
  The functor $\Phi$ takes the standard object $\mathbf{V}(\lambda)$ to the 
standard object $\Delta(\lambda) \in \C$, and similarly for costandard objects.
 \end{lemma}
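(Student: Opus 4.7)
The plan is to prove both halves of the lemma by induction on $|\lambda|:=|\lambda^{\bullet}|+|\lambda^{\circ}|$, matching the short exact sequences of Proposition \ref{prop:action_standards} against those supplied by Condition \eqref{cond:3} in Definition \ref{def:categorical_tensor_product}. For the base case $|\lambda|=0$, one has $\mathbf{V}(\varnothing,\varnothing)=\triv$, so $\Phi(\mathbf{V}(\varnothing,\varnothing))=U$; because the partial order in Definition \ref{def:categorical_tensor_product}\eqref{cond:1} makes $(\varnothing,\varnothing)$ minimal, the simple object $U$ coincides with both $\Delta(\varnothing,\varnothing)$ and $\nabla(\varnothing,\varnothing)$, disposing of both halves simultaneously.

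For the inductive step at $|\lambda|=n>0$, I would pick a removable box of content $a$ in $\lambda^{\bullet}$ and set $\mu:=\lambda-\blacksquare_a$, so that $|\mu|=n-1$ (if $\lambda^{\bullet}=\varnothing$, I would instead pick a removable box in $\lambda^{\circ}$ and run the parallel argument using $E_a$ in place of $F_a$). Applying the exact, strongly equivariant functor $\Phi$ to the sequence
\[0 \to \mathbf{V}(\lambda) \to F_a(\mathbf{V}(\mu)) \to \mathbf{V}(\mu-\square_{-(a+t)}) \to 0\]
of Proposition \ref{prop:action_standards}, and invoking the inductive hypothesis on the bipartitions $\mu$ and $\mu-\square_{-(a+t)}$ (each of size strictly less than $n$), yields
\[0 \to \Phi(\mathbf{V}(\lambda)) \to F'_a(\Delta(\mu)) \to \Delta(\mu-\square_{-(a+t)}) \to 0,\]
which I would compare to the parallel sequence
\[0 \to \Delta(\lambda) \to F'_a(\Delta(\mu)) \to \Delta(\mu-\square_{-(a+t)}) \to 0\]
supplied independently by Condition \eqref{cond:3} of Definition \ref{def:categorical_tensor_product}.

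The main obstacle is identifying the two kernels as the same subobject of $F'_a(\Delta(\mu))$. My plan is to show $\operatorname{Hom}_{\C}(\Delta(\lambda),\Delta(\mu-\square_{-(a+t)}))=0$: both weights sit in the same block of $\C$ (they appear as subquotients of the single module $F'_a(\Delta(\mu))$), so it suffices to check that they are incomparable in the poset of Definition \ref{def:categorical_tensor_product}\eqref{cond:1}. Using the fact that $\omega_{\mu^{\bullet}+\square_a}=\omega_{\mu^{\bullet}}-\alpha_a$ in the weight lattice of $\sll_{\bZ}$, one computes $\omega_{\lambda^{\bullet}}=\omega_{\mu^{\bullet}}-\alpha_a<\omega_{\mu^{\bullet}}=\omega_{(\mu-\square_{-(a+t)})^{\bullet}}$ in the dominance order, whence $\lambda\not\leq\mu-\square_{-(a+t)}$ and the Hom-space vanishes by the standard property of highest weight categories. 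Consequently the canonical copy of $\Delta(\lambda)\hookrightarrow F'_a(\Delta(\mu))$ is annihilated by the surjection onto $\Delta(\mu-\square_{-(a+t)})$ and therefore lies inside $\Phi(\mathbf{V}(\lambda))$; since both subobjects have the same class $[\Delta(\lambda)]$ in $K_0(\C)$, they must coincide. The costandard assertion follows either by the symmetric induction using the $E_a$-sequence of Proposition \ref{prop:action_standards}, or more economically by transporting through the contravariant duality $(\cdot)^{\vee}$ on $\cV$ invoked in the proof of Theorem \ref{thrm:tens_prod_categorif}.
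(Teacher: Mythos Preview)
Your proposal is correct and follows the same approach as the paper: both rest on the base case $\Phi(\triv)=U=\Delta(\varnothing,\varnothing)$, the exactness and strong equivariance of $\Phi$, and the matching short exact sequences from Proposition~\ref{prop:action_standards} and Condition~\eqref{cond:3}. The paper's proof is a one-line sketch listing these ingredients, whereas you have spelled out the induction and, in particular, supplied the kernel-identification step (via the Hom vanishing coming from $\omega_{\lambda^{\bullet}}<\omega_{(\mu-\square_{-(a+t)})^{\bullet}}$) that the paper leaves implicit.
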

\begin{proof}
This follows directly from the facts that both $\cV$, $\C$ satisfy Condition 
\ref{cond:3}, $\Phi$ is exact, and
$$\Phi(\mathbf{V}(\varnothing, \varnothing) \cong \triv) = U \cong 
\Delta(\varnothing, \varnothing).$$ The argument for costandard obejcts is 
exactly the same.
\end{proof}
\begin{lemma}\label{auxlem:tilting_equiv}
 The functor $\Phi$ takes objects in $\cD_t$ to objects in $\C^{tilt}$, and 
induces an equivalence of categories $\Phi: \cD_t 
\stackrel{\sim}{\longrightarrow} \C^{tilt}$. In particular, 
$\mathbf{T}(\lambda)$ is sent to the tilting object $T(\lambda) \in \C$.
\end{lemma}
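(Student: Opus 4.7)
\end{lemma}
\begin{proof}
The plan is to establish the lemma in four steps: first, show that $\InnaC{\Phi}$ carries $\cD_t$ into $\C^{tilt}$; second, that it is essentially surjective onto $\C^{tilt}$; third, that it is full on $\cD_t$ (faithfulness is already known); and fourth, identify $\InnaC{\Phi}(\mathbf{T}(\lambda))$ with $T(\lambda)$ explicitly.

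For the first two steps I plan to appeal directly to Lemma \ref{lem:categ_act_tilting}. Every object of $\cD_t$ is a summand of some mixed tensor power $V_t^{\otimes r}\otimes V_t^{*\otimes s}$, which by strong equivariance and the normalization $\InnaC{\Phi}(\triv)=U$ maps to the translation $F'^{r}E'^{s}U$; by Lemma \ref{lem:categ_act_tilting} this is tilting in $\C$, and so is any of its summands. Conversely, the same lemma realizes every indecomposable tilting of $\C$ as a summand of some translation of $U$, and since $\cD_t$ is Karoubian the corresponding idempotent already splits there, producing a preimage.

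The central step, and I expect the main obstacle, is fullness; the plan is a dimension count based on Corollary \ref{cor:Hom_two_tilting}. In $\cV$ one has
\[
\dim \Hom_{\cV}(\mathbf{T}(\lambda), \mathbf{T}(\mu)) = \sum_{\nu} (\mathbf{T}(\lambda):\mathbf{V}(\nu))_{\cV}\,(\mathbf{T}(\mu):\mathbf{V}(\nu))_{\cV},
\]
and I want the analogous identity in $\C$ to hold for the pair $\InnaC{\Phi}(\mathbf{T}(\lambda))$, $\InnaC{\Phi}(\mathbf{T}(\mu))$. The delicate point is that $\C$ is only a lower highest-weight category, so the standard Hom-formula for tilting objects in a highest-weight category does not apply directly; I plan to circumvent this by fixing $k$ large enough that the relevant tiltings lie in $\C^k$ and $\mathcal{V}_t^k$ and working inside those genuine highest-weight subcategories. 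Given that, because $\InnaC{\Phi}$ is exact and takes standards to standards by Lemma \ref{auxlem:standard_to_standard}, the standard-filtration multiplicities of $\InnaC{\Phi}(\mathbf{T}(\lambda))$ match those of $\mathbf{T}(\lambda)$, so the two Hom-dimensions coincide. Since $\InnaC{\Phi}$ is faithful, the induced injection on Hom-spaces is then an isomorphism.

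For the final step, full faithfulness yields $\End_{\cV}(\mathbf{T}(\lambda))\cong \End_{\C}(\InnaC{\Phi}(\mathbf{T}(\lambda)))$, so the latter ring is local and $\InnaC{\Phi}(\mathbf{T}(\lambda))$ is indecomposable. Exactness of $\InnaC{\Phi}$ applied to the inclusion of the bottom piece of the standard filtration $\mathbf{V}(\lambda)\hookrightarrow \mathbf{T}(\lambda)$ gives $\Delta(\lambda)\hookrightarrow \InnaC{\Phi}(\mathbf{T}(\lambda))$, and the uniqueness clause in Definition \ref{def:lower_hw_cat} then forces $\InnaC{\Phi}(\mathbf{T}(\lambda))\cong T(\lambda)$.
\end{proof}
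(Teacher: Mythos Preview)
Your proof is correct and follows the same strategy as the paper's: both use Lemma \ref{lem:categ_act_tilting} for the image and essential surjectivity, and both prove fullness by a Hom-dimension count via Lemma \ref{auxlem:standard_to_standard} together with the tilting Hom-formula in the highest-weight subcategories $\C^k$. The only minor remark is that the Hom-formula in $\C^k$ reads $\sum_\nu (T_1:\Delta(\nu))(T_2:\nabla(\nu))$, so you also need the costandard multiplicities to match---but Lemma \ref{auxlem:standard_to_standard} covers this as well; your explicit final step identifying $\Phi(\mathbf{T}(\lambda))\cong T(\lambda)$ is a detail the paper leaves implicit.
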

\begin{proof}
 \InnaC{By Lemma \ref{lem:categ_act_tilting}, $\C^{tilt}$ coincides with the 
Karoubian additive category generated by translations of $U$. Since $\cD_t$ is 
the Karoubian additive envelope of the full subcategory $\cD^0_t$ generated by 
mixed tensor powers of $V_t$, it is enough to check that the $\Phi$ induces an 
equivalence of categories between $\cD^0_t$ and the full subcategory $\C'^{0}$ 
of $\C$ whose objects are translations $G_1 \ldots G_m U$ where $G_1, \ldots, 
G_m \in \{E, F\}$. The fact the $\Phi(\cD^0_t) \subset \C'^{0}$ follows directly 
from the fact that $\Phi$ is equivariant, and we also conclude that $\Phi: 
\cD^0_t \to \C'^{0}$ is essentially surjective. Hence we only need to check that 
$\Phi$ is full (we already know it is faithful); that is, we need to prove that 
$$\dim \Hom_{\cD}(\mathbf T, \mathbf T') = \dim \Hom_{\C^{tilt}}(\Phi \mathbf 
T, \Phi \mathbf T') \;\; \forall \mathbf T, \mathbf T' \in \cD^0_t.$$
Lemma \ref{auxlem:standard_to_standard} together with the fact that $\Phi$ is 
exact now implies that $[\Phi \mathbf  T: \Delta(\lambda)] = [\mathbf  T: 
\mathbf{V}(\lambda)] $ for any $\mathbf T \in \cD^0_t$ and any $\lambda$. 

Let $\mathbf{V}(\lambda)^{\vee} \in \cV$ and $\nabla(\lambda) \in \C$ denote the 
respective costandard objects. Then for any $\mathbf T, \mathbf T' \in \cD^0_t$,
$$\dim \Hom_{\cD}(\mathbf T, \mathbf T') = \sum_{\lambda} [\mathbf  T: 
\mathbf{V}(\lambda)] [\mathbf  T': \mathbf{V}(\lambda)^{\vee}] = \sum_{\lambda} 
[\Phi \mathbf  T: \Delta(\lambda)] [\Phi \mathbf  T': \nabla(\lambda)]= \dim 
\Hom_{\C^{tilt}}(\Phi \mathbf T, \Phi \mathbf T').$$ This completes the proof of 
the lemma.}
\end{proof}

We now show that the functor $\Phi: \cV \to \C$ is fully faithful. For 
this, we consider the left adjoint functor $\InnaC{\Phi}_*: \C \to Ind-\cV$ and 
the counit of the adjunction $\eps: \Phi_* \Phi \to \id$. We need to check that 
it is an isomorphism. 
We use the presentation property of $\cD_t$ in $\cV$, as described in 
\cite{EHS}: for any object $M \in \cV$, there exist objects $T, T' \in \cD_t$ 
together with a surjective map $T \to M$ and an injective map $M \to T'$. In 
such a case, the statement that $\eps_M$ is an isomorphism clearly from the fact 
that $\eps_T, \eps_{T'}$ are isomorphisms, as shown in Lemma 
\ref{auxlem:tilting_equiv}.

It remains to check that the functor $\Phi: \cV \to \C$ is essentially 
surjective. Indeed, let $M \in \C$. 

\InnaC{Since $\C$ is a lower highest-weight category, there exists $k \geq 0$ 
such 
that $M \in \C^k$. Let $P, I \in \C^k$ be the projective cover and injective 
hull of $M$ in $\C^k$ respectively.
object an injective object $\C^k$, and a 
map $f:P \to I$ whose image is $M$. Hence it is enough to check that $P, I$ 
belong to the image of $\Phi$: since the functor $\Phi$ is full and exact, it 
will follow that the map $f:P \to I$ and $M=Im(f)$ lie in the image of $\Phi$ as 
well.

Now, the object $P$ is standardly-filtered, so it has a resolution by tilting 
objects in $\C^k$; similarly, $I$ is costandardly-filtered, so it has a 
resolution by tilting objects in $\C^k$. By Lemma \ref{auxlem:tilting_equiv}, 
the functor $\Phi$ induces an equivalence $\Phi: \cD_t 
\stackrel{\sim}{\longrightarrow} \C^{tilt}$; hence $P$, $I$ lie in the image of 
$\Phi$. This completes the proof of the theorem.}}

\end{proof}

When $t \notin \bZ$, the semisimple Deligne category $\mathcal{D}_t$ is clearly 
an (exterior) tensor product of the categorical $\sll_{\bZ}\times 
\InnaA{\sll_{\bZ}}$-module $Pol \boxtimes Pol_0^{\vee}$ in the sense of 
\InnaC{\cite[Remark 3.6]{LW}}. The uniqueness of such a tensor product 
categorification is straightforward, similarly to the uniqueness of the 
categorification of the $\sll_{\bZ}$-representation $\mathfrak{F}$.

\section{Future directions}

I. Losev suggested (\cite{L1}) two additional problems to be solved in the 
framework of this project, to be addressed in the future. 

\subsection{}
The first problem is to understand the multiplicities in the parabolic category 
$Par(\vec{t})$ (see \cite[Section 4]{Et}). Let us give a short description of 
such a category. Let $t_1, \ldots, t_n \in \bC$, and set $\vec{t}:= (t_1, 
\ldots, t_n)$, $t:= \sum_i t_i$. Consider the tensor categories 
$\mathcal{V}_{t_i}$ ($i=1,2, \ldots, n$) with generators $V_{t_i}$, $V^*_{t_i}$. 
Denote: $$\gl_{t_i} : = V_{t_i} \otimes 
V_{t_i}^*$$ The direct product $\mathcal{V}_{\vec{t}}:=\mathcal{V}_{t_1} 
\boxtimes \mathcal{V}_{t_2} \boxtimes \ldots \boxtimes\mathcal{V}_{t_n} $ is 
once again a tensor category; given any objects $A_{i_1} \in 
\mathcal{V}_{t_{i_1}}, \ldots, A_{i_r} \in \mathcal{V}_{t_{i_r}}$, we denote by 
$A_{i_1} \boxtimes \ldots \boxtimes A_{i_r}$ the object $$\triv \boxtimes \ldots 
\boxtimes \triv \boxtimes A_{j_1} \boxtimes \triv \boxtimes\ldots \boxtimes 
\triv \boxtimes A_{j_r} \boxtimes \triv \boxtimes\ldots \boxtimes \triv $$ in 
$\mathcal{V}_{\vec{t}}$, where $j_1 < j_2 < \ldots <j_r$ is a reordering of 
$(i_1, \ldots, i_r)$ in increasing order. We will consider objects 
$$\gl_{\vec{t}}:= \bigoplus_{1\leq i,j \leq n} V_{t_i}  \boxtimes 
V_{t_j}^*\;\;\; \mathfrak{n}_+ := \bigoplus_{i <j} V_{t_i}  \boxtimes 
V_{t_j}^*,\;\;\; \mathfrak{n}_- := \bigoplus_{i >j}   V_{t_i} \boxtimes 
V_{t_j}^*, \;\; \mathfrak{l}:= \bigoplus_{1\leq i \leq n} \gl_{t_i}$$ Thus 
$\gl_{\vec{t}} = \mathfrak{n}_+ \oplus \mathfrak{n}_- \oplus \mathfrak{l}$. This 
is a Lie algebra object, the image of $\gl_t \in \mathcal{V}_t$ under the exact 
SM functor $\mathcal{V}_t \longrightarrow \mathcal{V}_{\vec{t}}$ given by $$V_t 
\longmapsto V_{\vec{t}} := V_{t_1} \boxtimes\ldots\boxtimes V_{t_n}$$

By definition, any object of $\mathcal{V}_{\vec{t}}$ carries a natural action of 
$\mathfrak{l}$ (induced by the actions of $\gl_{t_i}$ on objects of 
$\mathcal{V}_{t_i}$ for each $i$).

The parabolic category $O^{\mathfrak{p}}_{\vec{t}}$ is defined to be the 
category of $U(\gl_{\vec{t}})$-modules in $Ind-\mathcal{V}_{\vec{t}}$ on which 
$\mathfrak{n}_+$ acts locally nilpotently, $\sll_{\vec{t}}:= \Ker( Tr: 
\mathfrak{l} \to \triv)$ acts naturally (via the embedding $\sll_{\vec{t}} 
\hookrightarrow \mathfrak{l}$, and the quotient $\mathfrak{z} = 
\quotient{\mathfrak{l}}{\sll_{\vec{t}}}$ acts semisimply.

Let $s_1, \ldots, s_n \in \bC$, and set $\vec{s}:=(s_1, \ldots, s_n)$. Let 
$\lam^1, \ldots, \lam^n$ be bipartitions, and consider the simple representation 
$\mathbf{L}_{s_i}(\lam^i)$ of $\gl_{t_i}$ in $\mathcal{V}_{t_i}$, given by the 
natural action of $\gl_{t_i}$, twisted by the character $s_i Tr: \gl_{t_i} \to 
\triv$. In this setting, one can define the parabolic Verma object 
$$M_{\vec{\lam}, \vec{s}} = \Ind^{\gl_{\vec{t}}}_{\mathfrak{l} \oplus 
\mathfrak{n}_+} \mathbf{L}_{s_1}(\lam^1) \boxtimes \ldots \boxtimes 
\mathbf{L}_{s_n}(\lam^i) $$ in $O^{\mathfrak{p}}_{\vec{t}}$. This object has a 
unique irreducible quotient $L_{\vec{\lam}, \vec{s}}$, and all simple objects of 
$O^{\mathfrak{p}}_{\vec{t}}$ are of this form (see \cite[Proposition 4.5]{Et}).

In \cite[Section 4]{Et}, P. Etingof suggested the problem of computing the 
Kazhdan-Lusztig coefficients in this category. I. Losev suggested that this 
should be done similarly to the computation of the (parabolic) super Kazhdan 
Lusztig coefficients in \cite{BLW}, by showing that the functors $F:= 
V_{\vec{t}}\otimes (\cdot)$, $E:= V_{\vec{t}}^* \otimes (\cdot)$ induce an 
$\sll_{\bZ}$-categorical action on the category $O^{\mathfrak{p}}_{\vec{t}}$. 
The Grothendieck group of $O^{\mathfrak{p}}_{\vec{t}}$ is then isomorphic, as an 
$\sll_{\bZ}$-module, to a tensor product of $r$ copies of $\mathfrak{F}$ and 
$\mathfrak{F}^{\vee}$ (with twisted actions). Computing the Kazhdan Lusztig 
coefficients would then be possible using techniques analogous to \cite{BLW}.
\subsection{}
The second problem is to construct a graded lift of $\cV$ in the sense of 
\cite{BLW}. This should allow to compute the graded Kazhdan-Lusztig 
multiplicities for $\cV$, similarly to \cite[Section 5]{BLW}.

\end{document}